\documentclass[10pt]{amsart}

\usepackage{xy, graphicx, color, hyperref,array, mathtools}

\usepackage{amsmath}
\usepackage{amssymb}
\usepackage{amsfonts}
 
\usepackage{mathrsfs}

\usepackage{tikz,verbatim}
 \usepackage{dsfont}

\setlength{\textwidth}{5.1in} 
\setlength{\textheight}{7.8in} 

\makeatletter
\def\subsection{\@startsection{subsection}{3}%
  \z@{.9\linespacing\@plus.7\linespacing}{.1\linespacing}%
  {\normalfont\bfseries}}
\makeatother 
 
 \xyoption{all}

\title[]{Central Extensions and Cohomology}
 
 \author{Pranjal Jain, Rohit Joshi, Steven Spallone }

 \newtheorem{thm}{Theorem}[section]
\newtheorem{c.intro}[thm]{Corollary}
\newtheorem{lemma}[thm]{Lemma}
\newtheorem{prop}[thm]{Proposition}
\newtheorem{cor}{Corollary}[thm]

\theoremstyle{definition}
\newtheorem{remark}[thm]{Remark}
 
\newtheorem{defn}[thm]{Definition} 
\newtheorem{example}[thm]{Example} 
 
\newcommand{\nc}{\newcommand}

\nc{\mc}{\mathcal}
\nc{\mb}{\mathbb}
\nc{\mf}{\mathfrak}

\nc{\ms}{\mathscr}

\nc{\ul}{\underline}
\nc{\ol}{\overline}
\nc{\N}{\mb N}

\nc{\R}{\mb R}
\nc{\Z}{\mb Z}
 
\nc{\C}{\mb C}

\nc{\sts}[1] {{\color{cyan}\textbf{SS:} [#1]}}
\nc{\pj}[1]{{\color{magenta}\textbf{PJ:} [#1]}}
\nc{\bks}{\backslash}

\nc{\dmo}{\DeclareMathOperator}

\nc{\mat}[4]{
    \begin{pmatrix}
      #1 & #2 \\
      #3 & #4
    \end{pmatrix}
}

\dmo{\Ker}{Ker} 
\dmo{\Pin}{Pin}
 \dmo{\Aut}{Aut}
 \dmo{\Borel}{Borel}
\dmo{\Sq}{Sq}
\dmo{\Ext}{Ext}
\dmo{\odd}{odd}
\dmo{\sgn}{sgn}
\dmo{\cl}{cl}
\dmo{\Cl}{Cl}
\nc{\beq}{\begin{equation*}}
\nc{\eeq}{\end{equation*}}
\nc{\half}{\frac{1}{2}}
\dmo{\id}{id}
\dmo{\diag}{diag}
\dmo{\gp}{gp}
\dmo{\Mod}{mod}
\dmo{\pr}{pr}
\dmo{\GL}{GL}
\dmo{\res}{res}
\dmo{\lin}{lin}
 \dmo{\vol}{vol}
\dmo{\Sp}{Sp}
\dmo{\SO}{SO}
\dmo{\BSO}{BSO}
\dmo{\im}{im}
\dmo{\BO}{BO}

\dmo{\Or}{O}

\dmo{\SL}{SL}
\dmo{\ab}{ab}
 \dmo{\Spin}{Spin}
 
\nc{\la}{\lambda}
  \nc{\eps}{\varepsilon}
  
 \nc{\lip}{\langle}
 \nc{\rip}{\rangle}
\nc{\gm}{\gamma}

\dmo{\Perm}{Perm}
\dmo{\Res}{Res}
\dmo{\Ind}{Ind}
\dmo{\tr}{tr}
\dmo{\Sym}{Sym}
\dmo{\reg}{reg}
\dmo{\End}{End}
\dmo{\Hom}{Hom}
\dmo{\Int}{Int}
\dmo{\Bun}{{\bf Bun}}
\dmo{\bun}{Bun}
\dmo{\HH}{H}
 \setcounter{tocdepth}{1}

 \DeclareFontFamily{U}{cbgreek}{}
\DeclareFontShape{U}{cbgreek}{m}{n}{
        <-6>    grmn0500
        <6-7>   grmn0600
        <7-8>   grmn0700
        <8-9>   grmn0800
        <9-10>  grmn0900
        <10-12> grmn1000
        <12-17> grmn1200
        <17->   grmn1728
      }{}

\DeclareRobustCommand{\qoppa}{%
  \text{\usefont{U}{cbgreek}{\normalorbold}{n}\symbol{21}}%
}
 
\makeatletter
\newcommand{\normalorbold}{%
  \ifnum\pdf@strcmp{\math@version}{bold}=\z@ bx\else m\fi
}

\address{University of British Columbia, Vancouver, BC, Canada V6T1Z4}
\email{pranjal.jain@math.ubc.ca}
\address{Indian Institute of Science Education and Research, Pune-411021, Maharashtra, India}
\email{rohitsj@students.iiserpune.ac.in}
\address{Indian Institute of Science Education and Research, Pune-411021, Maharashtra, India}
\email{sspallone@gmail.com}
\keywords{classifying spaces, central extensions, group cohomology}
\subjclass{Primary 57T10, Secondary 55R35}

\begin{document}
\maketitle
\begin{center} \today
\end{center}
 
 \tableofcontents
 
 \begin{abstract} 
 Let $G$ be a group which is topologically a CW-complex, $BG$ a classifying space for $G$, and $A$ a discrete abelian group. To a central extension of $G$ by $A$, we associate a cohomology class in $\HH^2(BG,A)$. We prove this association is injective, and bijective in many cases.  A homomorphism of such groups lifts to a central extension iff the pullback of the associated cohomology class vanishes.
 \end{abstract}

 \section{Introduction}
 
 
Let $A$ be a discrete abelian group. When $G$ is a discrete group, there is a well-known natural correspondence between central extensions of $G$ by $A$ and the quadratic group cohomology $\HH^2(G,A)$, attributed to Schreier \cite{Schreier}.
From this, one can systematically find cohomological criteria for lifting problems. In this paper we extend this correspondence to the topological group setting, when $G$ is topologically a CW-complex, for instance   a Lie group.
 
Let $\mb E(G,A)$ be the set of (equivalence classes of) central extensions of $G$ by $A$. A construction of Milgram-Steenrod gives a natural map 
\beq
\alpha_G: \mb E(G,A) \to \HH^2(BG,A),
\eeq
where $BG$ is a classifying space for $G$. When $G$ is discrete, the map $\alpha_G$ agrees with the classical bijection. In this note we prove:

\begin{thm} The map $\alpha_G$ is an injection. It is an isomorphism when $G$ is connected. When $G$ is the semidirect product of a discrete group and a connected group, then $\alpha_G$ is an isomorphism.
\end{thm}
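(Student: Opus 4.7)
My plan is to organize the argument around the commutative square obtained from $1 \to G^\circ \to G \to \pi_0 \to 1$ and naturality of $\alpha$:
\begin{equation*}
\xymatrix{
0 \ar[r] & \mb E(\pi_0,A)\ar[d]^{\alpha_{\pi_0}} \ar[r] & \mb E(G,A) \ar[d]^{\alpha_G} \ar[r] & \mb E(G^\circ, A)  \ar[d]^{\alpha_{G^\circ}}  \\
0 \ar[r] & H^2(B\pi_0,A)\ar[r] & H^2(BG,A) \ar[r]  & H^2(BG^\circ, A)     }.
\end{equation*}
I will prove left exactness of both rows and bijectivity of $\alpha_G$ in the connected case, and then combine these with the classical bijectivity of $\alpha_{\pi_0}$ for discrete groups via a diagram chase to conclude injectivity in general. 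The semidirect case will require an additional surjectivity argument on the top row.

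For the connected case, $\pi_1(BG) = \pi_0(G) = 1$ so $BG$ is simply connected; by Hurewicz and universal coefficients, $H^2(BG,A) \cong \Hom(\pi_2(BG),A) = \Hom(\pi_1(G),A)$. On the other side, since $A$ is discrete, a central extension $1 \to A \to \tilde G \to G \to 1$ of connected $G$ is a normal covering with deck group $A$; passing through the universal covering of $G$ and pushing out along the classifying homomorphism, such extensions are classified by $\Hom(\pi_1(G),A)$. I will verify that $\alpha_G$ realizes this matching by inspecting the Milgram--Steenrod construction on the canonical pushout.

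For the rows, the top is handled by a section/descent argument: a central extension of $G$ trivial over $G^\circ$ admits a group-theoretic section $G^\circ \to \tilde G$; centrality of $A$ and connectedness of $G^\circ$ force the image to be normal in $\tilde G$, and the quotient gives a central extension of $\pi_0$ by $A$ inverse to pullback along $G \to \pi_0$. The bottom row follows from the five-term exact sequence of the Serre spectral sequence of $BG^\circ \to BG \to B\pi_0$; since $BG^\circ$ is simply connected, $H^1(BG^\circ,A) = 0$, and the sequence collapses to the desired left exact row. Injectivity of $\alpha_G$ in general then follows from a four-lemma chase.

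The hardest step will be the semidirect case with $A = \Z/p\Z$. A splitting $\pi_0 \to G$ induces $B\pi_0 \to BG$, making the bottom row split-exact. To match this on the top row I must show $\mb E(G,A) \to \mb E(G^\circ,A)$ is surjective: extending a central extension $\tilde G^\circ$ of $G^\circ$ by $A$ to one of $G$ amounts to lifting the conjugation action $\pi_0 \to \Aut(G^\circ)$ to $\Aut(\tilde G^\circ)$ while fixing $A$, whose obstruction lives in a cohomology of $\pi_0$ with coefficients built from $A$ and $\pi_1(G^\circ)$. The hypothesis $A = \Z/p\Z$, together with the established diagram and the connected case, should reduce this obstruction to data absorbed by the splitting, and the five lemma then completes the proof.
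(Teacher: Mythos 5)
Your overall architecture coincides with the paper's: the same commutative ladder built from $1 \to G^\circ \to G \to \pi_0 \to 1$, left exactness of both rows, bijectivity of $\alpha_{G^\circ}$ in the connected case via $\Hom(\pi_1(G),A)$, and a diagram chase. (Your use of the five-term/low-degree sequence of the Serre spectral sequence for the bottom row is precisely the ``more sophisticated approach'' the paper mentions and then avoids in favor of a UCT and relative-Hurewicz argument; either works.) However, there is a genuine gap in the semidirect-product step. You propose to prove that the restriction $\mb E(G,A) \to \mb E(G^\circ,A)$ is surjective. This is false in general: by Lemma \ref{rest.fixed.pts} the image of restriction lies in the invariants $\mb E(G^\circ,A)^{\pi_0}$, which can be proper. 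Concretely, take $G=\Or(2)=\SO(2)\rtimes C_2$ and $A=\Z/3\Z$: then $\mb E(\SO(2),A)\cong \Hom(\pi_1(S^1),\Z/3\Z)\cong\Z/3\Z$, on which $\pi_0$ acts by $-1$, so the invariants are trivial and restriction cannot surject onto $\mb E(\SO(2),A)$. The statement you actually need (and the one the chase uses, since the bottom restriction also lands in $H^2(BG^\circ,A)^{\pi_0}$) is surjectivity onto $\mb E(G^\circ,A)^{\pi_0}$; both right-hand corners of your ladder must be refined to invariants.

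Your proposed mechanism for that surjectivity also misplaces the hypothesis $A=\Z/p\Z$. For a $\pi_0$-invariant extension with \emph{connected} total group $E_\bullet$, a lift $\tilde\theta$ of each automorphism $\theta$ exists by invariance and is \emph{unique} (two lifts differ by a continuous map from a connected space to the discrete group $A$), so $\theta\mapsto\tilde\theta$ is automatically a homomorphism and $E_\bullet\rtimes\pi_0$ is the required extension of $G$ --- there is no obstruction and no hypothesis on $A$; the disconnected case reduces to this by a pushout along $A\cap E_\bullet^\circ \to A$. The hypothesis $A=\Z/p\Z$ is needed elsewhere: to know that $\alpha_{\pi_0}$ is \emph{surjective} for the discrete group $\pi_0$ (via identifying the Milgram--Steenrod class with the classical group-cohomology class), which your chase also requires in order to hit classes coming from $H^2(B\pi_0,A)$. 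Relatedly, for the injectivity step you invoke ``classical bijectivity'' of $\alpha_{\pi_0}$ for general $A$; the paper only establishes (and only needs) injectivity there. Finally, identifying $\alpha_G$ with the covering-space classification in the connected case is not a formality: the relevant square of connecting homomorphisms only \emph{anticommutes} (Proposition \ref{anticommute}), so your planned ``inspection'' of the Milgram--Steenrod construction must account for a sign.
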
 
From the injectivity of $\alpha_G$ we deduce the following lifting criterion.

\begin{thm} \label{mid.intro}
 Let $\varphi:G' \to G$ be a homomorphism, $A$ a discrete abelian group, and  $p: E \to G$ an extension of $G$ by $A$.  Then $\varphi$ lifts to $E$ iff 
\beq
\varphi^*(\alpha_G(E,i,p))=0.
\eeq
(Here $i: A \to E$ is the inclusion.)
\end{thm}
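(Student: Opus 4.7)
The plan is to reduce the lifting of $\varphi$ to the splitting of a pulled-back central extension, then apply the injectivity of $\alpha_{G'}$ from Theorem 1.1. First I form the fiber product
\beq
\varphi^*E := \{(g',e) \in G' \times E : \varphi(g') = p(e)\},
\eeq
which sits in a central extension $1 \to A \to \varphi^*E \xrightarrow{\pi} G' \to 1$ with $\pi(g',e) = g'$ and $A$ embedded via $a \mapsto (1,i(a))$; in the category of CW-complex groups this remains a central extension of the required sort. A continuous homomorphism lift $\tilde\varphi: G' \to E$ of $\varphi$ is precisely the same data as a continuous homomorphism section of $\pi$, via the bijection $\tilde\varphi \leftrightarrow (g' \mapsto (g',\tilde\varphi(g')))$. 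Hence $\varphi$ lifts iff $\varphi^*E$ is split, i.e.\ equivalent to the trivial extension $G' \times A$.

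The trivial extension has $\alpha_{G'}$-class zero, since it admits a global homomorphism section (any reasonable construction of $\alpha$ sends it to $0$). By the injectivity of $\alpha_{G'}$ from Theorem 1.1, this equivalence is detected cohomologically: $\varphi^*E$ is split iff $\alpha_{G'}(\varphi^*E) = 0$. It then remains to identify
\beq
\alpha_{G'}(\varphi^*E) = \varphi^*\bigl(\alpha_G(E,i,p)\bigr) \in H^2(BG',A),
\eeq
which is the statement of naturality of the Milgram-Steenrod map $\alpha$ with respect to $\varphi$: the induced map $B\varphi: BG' \to BG$ pulls the cohomology class of $E$ back to the class of the fiber product. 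Granting this, the theorem follows.

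The main obstacle is therefore the naturality of $\alpha$ under homomorphisms. In the discrete setting this is a routine $2$-cocycle calculation, but here one must verify (or extract directly from the construction) that $\alpha_G$ commutes with pullback of central extensions along $\varphi$ and the induced map $B\varphi$ of classifying spaces in the CW-complex topological category. Once this compatibility is in hand, the remainder of the argument is formal.
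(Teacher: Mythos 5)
Your argument is correct and is essentially the paper's own proof: reduce lifting to splitting of the pullback extension (Proposition \ref{ext.lift}), detect splitting via injectivity of $\alpha_{G'}$ (Theorem \ref{big.theorem}), and conclude by naturality of $\alpha$ in $G$. The naturality you flag as the remaining obstacle is indeed established separately in the paper (Proposition \ref{G.disc.alpha.inj}, via Propositions \ref{Bp.funct} and \ref{alphabunnatural}), so nothing further is needed.
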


For the orthogonal group $G=\Or(V)$ of a quadratic space, we describe its three inequivalent double covers, denoted by $\Pin^{\pm}(V)$ and $\widetilde \Or(V)$, and then describe $\alpha_G$ for each. 
 Finally, we give an application of the above to Stiefel-Whitney classes of orthogonal representations. 
 
\begin{thm} \label{last.intro} Let $G$ be a topological group which is a CW-complex. An orthogonal representation $\pi: G \to \Or(V)$ lifts to
\begin{enumerate}
\item $\widetilde \Or(V)$ iff $w_1(\pi)^2=0$.
\item $\Pin^+(V)$ iff $w_2(\pi)=0$.
\item $\Pin^-(V)$ iff $w_2(\pi)+w_1(\pi)^2=0$.
\end{enumerate}
When $\pi$ lifts, the set of lifts of $\pi$ admits a simply transitive action by the group of continuous linear characters $\chi: G \to \{ \pm 1\}$. 
\end{thm}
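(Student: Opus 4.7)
The plan is to apply Theorem~\ref{mid.intro} directly to each of the three central extensions $E \in \{\widetilde\Or(V), \Pin^+(V), \Pin^-(V)\}$ of $G=\Or(V)$ by $A=\Z/2\Z$. The theorem reduces the existence of a lift of $\pi$ to $E$ to the vanishing of the pullback $\pi^*\alpha_{\Or(V)}(E)$ in $H^2(BG',\Z/2\Z)$. Combined with the naturality relation $\pi^*(w_i)=w_i(\pi)$ and the fact that $\pi^*$ respects cup products, the three criteria in the theorem follow at once from the three identifications
\beq
\alpha_{\Or(V)}(\widetilde\Or(V))=w_1^2,\quad \alpha_{\Or(V)}(\Pin^+(V))=w_2,\quad \alpha_{\Or(V)}(\Pin^-(V))=w_2+w_1^2.
\eeq

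To establish these, first recall that $H^*(B\Or(n),\Z/2\Z)=\Z/2\Z[w_1,w_2,\ldots]$, so its degree-two part has basis $\{w_1^2,w_2\}$ and contains exactly three nonzero classes. By the injectivity half of Theorem~1.1 and the hypothesis of the last clause there, the three pairwise inequivalent nontrivial double covers map to three pairwise distinct nonzero classes under $\alpha_{\Or(V)}$, and so exhaust these three classes in some order. I would pin down the specific matching by restriction to the diagonal elementary abelian subgroup $(\Z/2\Z)^n\subset\Or(n)$ of coordinate reflections: the restriction map $H^2(B\Or(n),\Z/2\Z)\to \Z/2\Z[t_1,\ldots,t_n]$ is injective in degree two, with $w_1\mapsto \sum_i t_i$ and $w_2\mapsto \sum_{i<j}t_it_j$. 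An explicit computation of the Milgram--Steenrod cocycle for $\Pin^+(V)$ on coordinate reflections yields $\sum_{i<j}t_it_j$, matching $w_2$; replacing the quadratic form by its negative in the Clifford construction of $\Pin^-(V)$ contributes an extra $\sum_i t_i^2$, so altogether the class is $w_2+w_1^2$. The remaining class $w_1^2$ is then forced on $\widetilde\Or(V)$; this can also be checked independently, since $\widetilde\Or(V)$ is the pullback along $\det:\Or(V)\to\Z/2\Z$ of the nontrivial extension $\Z/4\Z\to\Z/2\Z$, and naturality of $\alpha$ together with the classical identification $\alpha(\Z/4\Z\to\Z/2\Z)=\Sq^1$ gives $\alpha(\widetilde\Or(V))=\Sq^1(w_1)=w_1^2$.

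For the final assertion, fix one lift $\tilde\pi_0:G\to E$ of $\pi$. Any other lift $\tilde\pi$ satisfies $p\circ\tilde\pi=p\circ\tilde\pi_0=\pi$, so the pointwise ratio $g\mapsto \tilde\pi(g)\tilde\pi_0(g)^{-1}$ takes values in $A=\{\pm 1\}\subset E$; this ratio is a continuous group homomorphism because $A$ is central. Conversely, any continuous character $\chi:G\to\{\pm 1\}$ yields a new lift $g\mapsto\chi(g)\tilde\pi_0(g)$, and these two constructions are mutually inverse. This establishes that the set of lifts of $\pi$ is a torsor under $\Hom_{\mathrm{cts}}(G,\{\pm 1\})$.

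The main obstacle is the explicit cocycle matching $\alpha(\Pin^+(V))=w_2$ rather than $w_2+w_1^2$; it requires care with sign conventions in the Clifford algebra and with the precise form of the Milgram--Steenrod cocycle, and is where the earlier discussion in the paper of the three covers and their images under $\alpha$ would be invoked. Once this one identification is pinned down, the other two follow by the counting argument and the independent Bockstein computation for $\widetilde\Or(V)$.
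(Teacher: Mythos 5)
Your proposal is correct and follows essentially the same route as the paper: reduce to the lifting criterion of Theorem \ref{mid.intro}, identify $\alpha_{\Or(V)}$ of the three nontrivial double covers by restricting to an elementary abelian $2$-subgroup of coordinate reflections and computing explicit $2$-cocycles, and obtain the torsor statement from the standard character-twisting argument of Section \ref{rel.lift.prob}. The only cosmetic difference is that the paper restricts to the Klein four-subgroup $\Gamma_2$ (distinguishing the covers by their quaternion, dihedral, and $C_4\times C_2$ preimages) rather than the full diagonal $(\Z/2\Z)^n$, and does not need your supplementary counting and Bockstein arguments, though both are valid.
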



 The equivalence of $\mb E(G,A)$ and $\HH^2(BG,A)$ was observed in the '70s. A proof is given in \cite[Theorem 4]{Wigner} by comparing two spectral sequences in simplicial sheaf cohomology. We have nonetheless written this manuscript because  for a result of this importance, it is useful to have alternate proofs. When $G$ is connected, our correspondence essentially agrees with the obstruction class construction in \cite{NWW}, according to Remark 7.17 in that paper and Proposition \ref{alpha.beta} below. 
 
 The layout of the paper is as follows.  After reviewing definitions for topological groups in Section \ref{notprem}, we examine the category of their central extensions in Section \ref{exs.section}.  
 The theory of (principal) $G$-bundles is reviewed in Section \ref{G.bund.sec}.
  Then Section \ref{class.space.section} reviews some technical bits of topological group theory. Next, Section \ref{Milgram.Steenrod.sec} lays out the Milgram-Steenrod model and its functorial properties. 
   In Section \ref{long.ex.seq.section} we define a correspondence $\beta_G$, rather than $\alpha_G$, when $G$ is connected by means of covering space theory. 
 In Section \ref{Connecting Homomorphisms} we introduce a universal $A$-bundle $EE \to X_{\ms E}$ adapted to the extension $\ms E$, which serves as a kind of hub of the computations.  For disconnected groups, we incorporate the exact sequence for the component group $\pi_0(G)$ in Section \ref{disc.section}. In Section \ref{kappa.section} we use the $K(A,2)$-spaces to define a natural map from the bundles of Section \ref{class.space.section} to the quadratic cohomology. By Section \ref{alpha.section} everything is in place for our definition of $\alpha_G$, and Theorem \ref{mid.intro} follows by gathering the earlier work. 

 Then we focus on the orthogonal groups. In Section \ref{SWC.section} we review Stiefel-Whitney classes, and in Section \ref{double.covs.ort.sec} we describe the double covers of the orthogonal group, culminating in Theorem \ref{last.intro}.
  
  Finally, in Section \ref{sec:further} we indicate extensions of this approach to higher degrees, and to nondiscrete $A$, as investigated in \cite{Pranjal.Thesis}.
  
  \bigskip

\textbf{Acknowledgements.} The first author was supported by the INSPIRE-SHE scholarship.The second author was supported by a postdoctoral fellowship from NBHM (National Board of Higher Mathematics), India. 
We thank  Karthik Vasisht for helpful discussions.
 \section{Notation and Preliminaries} \label{notprem}
 
  \begin{defn}
 We say that a topological space $B$ is \emph{nice}, provided it is Hausdorff, locally path-connected, semilocally simply connected, and nonempty.
\end{defn}
 
 When $G$ is a group, we write $1_G \in G$ for the identity, and $G^{\ab}$ for the abelianization of $G$. When $A$ is an abelian group, we may use additive notation, so the identity would be $0$.
If $x,g \in G$, put
 \beq
 \Int(x)(g)=x^{-1}gx.
 \eeq
  
 Let $\mu_n$ be the group of $n$th roots of unity in $\C^\times$. Let $C_n$ be the cyclic group of order $n$.
 Throughout this paper $A$ is a discrete abelian group.

  For us, topological groups will always be Hausdorff. For a topological group $G$, denote by $G^\circ$ the path component of $G$ containing the identity. It is a normal subgroup of $G$; write $\pi_0(G)=G^\circ \bks G$ for the quotient. Write $\Aut(G)$ for the (continuous) automorphisms of $G$.

 \section{Central Extensions} \label{exs.section}
 
 In this section we study the category of central extensions of topological groups by discrete groups. Throughout, $G$ is a topological group.
 
 
 
\begin{defn} A \emph{central extension of $G$ by $A$} is triplet $\ms E=(E,i,p)$, where $E$ is a topological group, $i: A \to E$ and  $p: E \to G$ are continuous homomorphisms. We require that $i$ is an injection onto a discrete central subgroup of $E$, that $p$ is open and surjective, and that $\ker p=\im i$.
  \end{defn}
Thus $\ms E$ can be identified with the exact sequence
\begin{equation} \label{first.ext}
\mathscr E: 0 \to A \overset{i}{\to} E \overset{p}{\to} G \to 1,
\end{equation}
of continuous homomorphisms of topological groups. Another central extension
\beq
\ms E': 0 \to A \overset{i'}{\to} E' \overset{p'}{\to} G \to 1
\eeq
is \emph{equivalent} to \eqref{first.ext} when there is a homomorphism $\varphi: E \to E'$ with $\varphi \circ i=i'$ and $p' \circ \varphi=p$. (It is necessarily an isomorphism.)
 In this case we write $\ms E \cong \ms E'$. Let ${\bf E}(G,A)$ be the category of central extensions of $G$ by $A$, and let $\mb E(G,A)$ be the set of central   extensions of $G$ by $A$, modulo equivalence.

 \begin{example} \label{univ.gp} Suppose $G$ is a nice connected group. According to  \cite[Proposition 6, Page 379]{Bou.AT}, there is a morphism $p: \tilde G \to G$, with $\tilde G$ a simply connected topological group. There is an injection $i: \pi_1(G) \to \tilde G$ whose image is the kernel of $p$, so that $\ms E_G=(\tilde G,i,p) \in \mb E(G, \pi_1(G))$.
 \end{example}

 \subsection{Functorial Properties}
 
 A  morphism $\varphi: G' \to G$ gives rise to a ``pullback'' morphism  $ \varphi^*:\mb E(G,A) \to \mb E(G',A)$ as follows:
Given $\ms E=(E,i,p) \in {\bf E}(G,A)$, set
  \beq
 E' = \{(g',e) \mid \varphi(g') = p(e) \},
  \eeq
 viewed as a closed subgroup of the product $G' \times E$.
  Define $i':A \to E'$  by $i'(a)=(1,i(a))$, and let $p'$ be the first projection. Finally put $\varphi^* \ms E=(E',i',p')$.
  In particular, the group $\Aut(G)$  acts on $\mb E(G,A)$ by pullback.

  Similarly, a homomorphism $\psi:A \to A'$  between discrete abelian groups gives rise to a ``pushout central extension'' as follows.  Given $\ms E=(E,i,p) \in {\bf E}(G,A)$, set 
  \beq
  A_\psi= \{(i(a),-\psi(a)) \mid a \in A\},
  \eeq
  a normal subgroup of $E \times A'$, and let
  \begin{equation} \label{e.prime}
 E'= (E \times A')/ A_{\psi}.
 \end{equation}
  Define $i': A' \to E'$ to be the evident inclusion, and $p':E' \to G$ by sending the coset of $(e,a') \mod A_{\psi}$ to $p(e)$. Then
  $\psi_* \ms E=(E',i',p')$ is a central extension of $G$ by $A'$, and this gives a homomorphism $\psi_*: \mb E(G,A) \to \mb E(G,A')$.
 
If we define $\psi_E: E \to E'$ by $\psi_E(e)=(e,1) \mod A_{\psi}$, then the diagram
\begin{equation} \label{diag.ek}  
	\xymatrix{
		A \ar[rr]^{\psi} \ar[d]^i &&A' \ar[d]^{i'}\\
		E \ar[rr]^{\psi_E}   \ar[rd]^p && E' \ar[ld]^{p'}  \\
		 & G & \\
	}
	\end{equation}
commutes.

\begin{lemma} \label{likely.commutes} If $\varphi:G' \to G$ and $\psi: A \to A'$ are group homomorphisms, then 
\beq
\psi_* \circ \varphi^*=\varphi^* \circ \psi_*: \mb E(G, A) \to \mb E(G',A').
\eeq
\end{lemma}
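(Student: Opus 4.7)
The plan is to fix a central extension $\ms E = (E, i, p)$ of $G$ by $A$ and exhibit an explicit equivalence between $\psi_* \varphi^* \ms E$ and $\varphi^* \psi_* \ms E$ in ${\bf E}(G', A')$, natural in $\ms E$.

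First I would unravel both sides explicitly. The underlying group of $\psi_* \varphi^* \ms E$ is
\beq
F_1 = (E_1 \times A')/N_1, \quad E_1 = \{(g', e) \in G' \times E : \varphi(g') = p(e)\},
\eeq
where $N_1 = \{((1, i(a)), -\psi(a)) : a \in A\}$. The underlying group of $\varphi^* \psi_* \ms E$ is
\beq
F_2 = \{(g', [e, a']) \in G' \times E_2 : \varphi(g') = p(e)\},
\eeq
where $E_2 = (E \times A')/N_2$ and $N_2 = \{(i(a), -\psi(a)) : a \in A\}$. In both cases the structure maps are the evident inclusion of $A'$ and projection onto $G'$.

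Next I would define the comparison map $\Phi : F_1 \to F_2$ by
\beq
\Phi[((g', e), a')] = (g', [e, a']).
\eeq
Well-definedness on the quotient by $N_1$ is direct: changing the representative by $((1, i(a)), -\psi(a))$ shifts $(e, a')$ by an element of $N_2$, which is killed on passing to $E_2$. From the formulas it is immediate that $\Phi$ is a group homomorphism that commutes with the inclusions of $A'$ and with the projections to $G'$. Continuity follows from the universal property of the quotient topology on $F_1$: the composite $E_1 \times A' \to G' \times E_2$, $((g', e), a') \mapsto (g', [e, a'])$, is continuous (using continuity of the quotient map $E \times A' \to E_2$), factors through $F_1$, and lands in the subspace $F_2$. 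By the remark after the definition of equivalence, any such morphism of central extensions is automatically an isomorphism, so $\Phi$ yields the required equivalence $\psi_*\varphi^* \ms E \cong \varphi^* \psi_* \ms E$.

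The main obstacle, such as it is, is purely notational: one must keep straight two layers of construction (a fibre product inside a quotient, versus a quotient inside a fibre product). Once the bijection $[((g', e), a')] \leftrightarrow (g', [e, a'])$ is written down, every verification is a one-line unwinding of definitions, and naturality in $\ms E$ is manifest.
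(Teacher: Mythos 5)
Your proof is correct and is exactly the elementary direct verification that the paper delegates to a citation (Mac Lane, \emph{Homology}, Exercise 5, p.\ 114) with the word ``Elementary'': both sides are unravelled, the evident bijection $[((g',e),a')]\leftrightarrow (g',[e,a'])$ is checked to be a well-defined continuous homomorphism commuting with the structure maps, and the paper's remark that any morphism of central extensions is automatically an isomorphism finishes the argument. No discrepancy with the paper's (omitted) proof.
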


\begin{proof} Elementary; see for instance \cite[Exercise 5, page 114]{Maclane.Homology}.
\end{proof}
  
\begin{lemma} \label{liftingtheta} 
	Let $\theta \in \Aut(G)$ and $\ms E =(E,i,p) \in {\bf E}(G,A)$. Then $\theta^*(\ms E) \cong \ms E$ iff there exists    $\tilde{\theta} \in \Aut(E)$ so that
	\beq
	\xymatrix{
		E \ar[r]^{\tilde{\theta}} \ar[d]^p& E \ar[d]^p\\
		G \ar[r]^{\theta} & G\\
	}
	\eeq 
	commutes.
	
\end{lemma}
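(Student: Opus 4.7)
The plan is to prove both directions by direct construction, translating between a lift $\tilde{\theta} \in \Aut(E)$ and an equivalence $\varphi: E \to E'$ of central extensions, where $E' = \{(g',e) \in G \times E : \theta(g') = p(e)\}$ is the underlying group of $\theta^* \ms E$, with $i'(a) = (1, i(a))$ and $p'$ the first projection.

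For the forward direction, I will suppose $\varphi: E \to E'$ realizes the equivalence, so that $\varphi \circ i = i'$ and $p' \circ \varphi = p$. Writing $\varphi(e) = (p(e), \tilde{\theta}(e))$ with $\tilde{\theta} := \pi_2 \circ \varphi$, where $\pi_2: E' \to E$ is the second projection, the defining condition on $E'$ forces $p \circ \tilde{\theta} = \theta \circ p$, which is the commuting square we want. I would then verify $\tilde{\theta} \in \Aut(E)$: injectivity follows because $\tilde{\theta}(e) = \tilde{\theta}(e')$ together with the bijectivity of $\theta$ gives $p(e) = p(e')$ and hence $\varphi(e) = \varphi(e')$; surjectivity follows because for any $e_0 \in E$, the pair $(\theta^{-1}(p(e_0)), e_0)$ lies in $E'$ and therefore in the image of $\varphi$; continuity of $\tilde{\theta}^{-1}$ follows by writing it as $e_0 \mapsto \pi_2(\varphi^{-1}(\theta^{-1}(p(e_0)), e_0))$, which is continuous because $\varphi^{-1}$ is.

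For the reverse direction, given $\tilde{\theta} \in \Aut(E)$ with $p \circ \tilde{\theta} = \theta \circ p$, I would define $\varphi: E \to E'$ by $\varphi(e) := (p(e), \tilde{\theta}(e))$; the hypothesis guarantees that $\varphi$ lands in $E'$, it is a continuous homomorphism with $p' \circ \varphi = p$ by inspection, and its bijectivity and bicontinuity come from the automorphism property of $\tilde{\theta}$, with inverse $(g', e) \mapsto \tilde{\theta}^{-1}(e)$.

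The main obstacle is the remaining compatibility $\varphi \circ i = i'$, which is equivalent to $\tilde{\theta} \circ i = i$. The diagram displayed in the lemma only records compatibility with $p$ and $\theta$; from $p \circ \tilde{\theta} = \theta \circ p$ one only deduces that $\tilde{\theta}(i(A)) \subseteq \ker p = i(A)$, so that $\tilde{\theta}$ induces an automorphism $\psi \in \Aut(A)$ via $\tilde{\theta} \circ i = i \circ \psi$. The condition $\psi = \id_A$ is automatic in the forward direction from $\varphi \circ i = i'$, and must be incorporated into the hypothesis of the reverse direction (equivalently, one must choose the lift $\tilde{\theta}$ to restrict to the identity on $i(A)$) in order to conclude that $\varphi$ is an equivalence of central extensions in the strict sense defined earlier in the paper.
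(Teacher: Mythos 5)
Your argument is correct and follows essentially the same route as the paper's proof, which simply identifies $\theta^*(\ms E)$ with $(E,i,\theta^{-1}\circ p)$ and observes that an equivalence with $\ms E$ is then literally a map $\tilde\theta$ of the displayed kind; your two directions are the explicit unwinding of that identification, with the bijectivity and bicontinuity checks done carefully.

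The obstacle you isolate at the end is genuine, and it is a defect of the lemma as stated rather than of your argument: an equivalence of extensions must also satisfy $\varphi\circ i=i'$, which translates into $\tilde\theta\circ i=i$, whereas the displayed square only forces $\tilde\theta(i(A))=i(A)$. If $\tilde\theta$ induces a nontrivial $\psi\in\Aut(A)$ on $i(A)$, your $\varphi$ exhibits an equivalence of $\theta^*\ms E$ with $\psi_*\ms E$, not with $\ms E$, and the ``if'' direction can actually fail: take $A=\Z/3$, $E=\Z/9$, $G=\Z/3$ with $p$ the quotient map, and let $\theta$ and $\tilde\theta$ both be inversion. Then $\tilde\theta$ covers $\theta$, but $\theta^*\ms E\cong(E,i,-p)$ is the other nontrivial class in $\mb E(G,A)\cong\Z/3$, hence not equivalent to $\ms E$. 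The correct statement of the reverse implication requires $\tilde\theta$ to restrict to the identity on $i(A)$, exactly as you say. None of the paper's downstream uses are harmed: the forward direction (used in Proposition \ref{surj}) is unaffected, and the lifts invoked in Corollary \ref{inner.triv.action}, Lemma \ref{rest.fixed.pts} (inner automorphisms, which fix the central subgroup $i(A)$ pointwise) and Proposition \ref{weil.prop} (complex conjugation, which fixes $\mu_2$) all satisfy the missing condition.
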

 
\begin{proof} Since $\theta^*(\ms E) \cong (E,i,\theta^{-1} \circ p)$, an equivalence between $\ms E$ and $\theta^*(\ms E)$ amounts to the existence of such a $\tilde \theta$.
\end{proof}

\begin{cor} \label{inner.triv.action} Given $x \in G$, the inner automorphism $\Int(x)$ acts trivially on $\mb E(G,A)$.
\end{cor}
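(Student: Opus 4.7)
My plan is to reduce the corollary immediately to Lemma \ref{liftingtheta} applied with $\theta = \Int(x)$. The statement to be proved says precisely that $\Int(x)^*(\ms E) \cong \ms E$ for every $\ms E = (E,i,p) \in {\bf E}(G,A)$, and the lemma converts this into the task of exhibiting a continuous automorphism $\tilde\theta \in \Aut(E)$ satisfying $p \circ \tilde\theta = \Int(x) \circ p$.

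To produce such a $\tilde\theta$, I would use surjectivity of $p$ to pick any preimage $\tilde x \in E$ of $x$, and then set $\tilde\theta := \Int(\tilde x)$, i.e., conjugation by $\tilde x$ inside $E$. Inner automorphisms of a topological group are always continuous automorphisms, so $\tilde\theta \in \Aut(E)$ is automatic. The compatibility with $p$ is then a one-line check: since $p$ is a homomorphism with $p(\tilde x) = x$, we have $p(\tilde x^{-1} e \tilde x) = x^{-1} p(e) x$ for every $e \in E$, which is exactly $p \circ \tilde\theta = \Int(x) \circ p$.

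I do not foresee a real obstacle here: the essential input is already packaged into Lemma \ref{liftingtheta}, and the construction of $\tilde\theta$ is forced by the requirement that it lift $\Int(x)$. I would also note in passing that the choice of lift $\tilde x$ is immaterial, since any two preimages of $x$ under $p$ differ by an element of the central subgroup $i(A) = \ker p$, and conjugation by a central element has no effect on $\Int(\tilde x)$.
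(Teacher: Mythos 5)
Your proposal is correct and is essentially identical to the paper's own proof: pick a preimage $y\in E$ of $x$, observe that $\Int(y)$ covers $\Int(x)$, and invoke Lemma \ref{liftingtheta}. The paper simply compresses the verification that $\Int(y)$ is a continuous automorphism covering $\Int(x)$ into ``it is easy to see,'' which you spell out.
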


\begin{proof} Let $\ms E =(E,i,p) \in {\bf E}(G,A)$. Pick $y \in E$ with $p(y)=x$. It is easy to see that $\Int(y)$ covers $\Int(x)$, so $\Int(x)^*(\ms E) \cong \ms E$ by Lemma \ref{liftingtheta}.
\end{proof}

\subsection{Baer Sums} \label{Baer.groups} 
Let $\ms E_1,\ms E_2 \in {\bf E}(G,A)$, with $\ms E_k=(E_k,i_k,p_k)$ for $k=1,2$. Write $E_3'$ for the pullback of $p_1$ and $p_2$:
\beq
	\xymatrix{
		E_3' \ar[r] \ar[d] &E_2 \ar[d]^{p_2}\\
		E_1 \ar[r]^{p_1}   & G  \\
	}
	\eeq 
Write $\triangledown: A \to E_3'$ for the antidiagonal map $ \triangledown(a)=(i_1(a),i_2(a)^{-1})$, and  let $E_3$ be the quotient of $E_3'$ by the image of $\triangledown$.
Define $i_3:A \to E_3$ by $i_3(a)=(a,1) \mod \triangledown(A)$, and $p_3:E_3 \to G$ by $p_3(e_1,e_2)=p_1(e_1)$. Then 
$\ms E_3=(E_3,i_3,p_3) \in {\bf E}(G,A)$, and is called the \emph{Baer sum} $\ms E_3=\ms E_1 + \ms E_2$
of $\ms E_1$ and $\ms E_2$. The Baer sum makes $\mb E(G,A)$ into an abelian group.  (See \cite[Definition 3.4.4]{weibel}.)  The neutral element is represented by the evident trivial extension
\beq
\mathscr E_0:  0 \to A \to G \times A \to G \to 1.
\eeq

 \subsection{Relation to Lifting Problems} \label{rel.lift.prob}
 The following is clear:
 \begin{prop} \label{ext.lift} Let $\varphi:G' \to G$ and $\ms E=(E,i,p) \in  {\bf E}(G,A)$ then $\varphi$ lifts to $E$ if and only if $\varphi^*(\ms E) =0 \in {\bf E}(G',A)$. 
 \end{prop}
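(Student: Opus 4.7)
The plan is to reduce the proposition to the standard fact that a central extension is trivial (equivalent to $\ms E_0$) if and only if it admits a continuous homomorphic section. Recall from the construction preceding Lemma~\ref{likely.commutes} that $\varphi^*(\ms E) = (E',i',p')$, where
\beq
E' = \{(g',e) \in G' \times E \mid \varphi(g') = p(e)\},
\eeq
$i'(a) = (1,i(a))$, $p'$ is the first projection, and the second projection $\pi_2: E' \to E$ satisfies $p \circ \pi_2 = \varphi \circ p'$.

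For the forward direction, suppose $\tilde\varphi: G' \to E$ is a continuous homomorphic lift of $\varphi$. I would define $s: G' \to E'$ by $s(g') = (g',\tilde\varphi(g'))$. This is well-defined since $p \circ \tilde\varphi = \varphi$, continuous by the universal property of the subspace topology, and a group homomorphism, and it satisfies $p' \circ s = \id_{G'}$. Then the map $G' \times A \to E'$, $(g',a) \mapsto s(g') \cdot i'(a)$, is a continuous homomorphism (using centrality of $i'(A)$), bijective (surjectivity: for $e \in E'$ the element $e \cdot s(p'(e))^{-1}$ lies in $\ker p' = i'(A)$; injectivity is immediate from $p' \circ s = \id_{G'}$), and in fact a homeomorphism, since the inverse $e \mapsto (p'(e),\, i'^{-1}(e \cdot s(p'(e))^{-1}))$ is continuous using that $i'$ is a homeomorphism onto the discrete subgroup $i'(A) \subset E'$. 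This exhibits $\varphi^*(\ms E) \cong \ms E_0$.

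For the converse, an equivalence $\varphi^*(\ms E) \cong \ms E_0$ composed with the canonical inclusion $G' \hookrightarrow G' \times A$, $g' \mapsto (g',0)$, gives a continuous homomorphism $s: G' \to E'$ with $p' \circ s = \id_{G'}$. I would then take $\tilde\varphi := \pi_2 \circ s: G' \to E$, which is a continuous homomorphism satisfying
\beq
p \circ \tilde\varphi = p \circ \pi_2 \circ s = \varphi \circ p' \circ s = \varphi,
\eeq
so it is the desired lift.

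I do not anticipate a real obstacle here: the statement is essentially the universal property of the pullback combined with the splitting characterization of trivial central extensions. The only mildly delicate point is verifying that the continuous group bijection $G' \times A \to E'$ above is a homeomorphism, and this follows directly from the hypothesis in the definition of central extension that $i$ realizes $A$ as a \emph{discrete} subgroup, so that $i'^{-1}$ is continuous on $i'(A)$.
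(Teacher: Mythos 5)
Your argument is correct, and it is precisely the standard verification (universal property of the pullback plus the splitting characterization of trivial extensions) that the paper omits by declaring the proposition clear. The one point the paper's terseness glosses over --- that the continuous bijection $G' \times A \to E'$ is actually a homeomorphism --- you handle properly using discreteness of $i'(A)$, so nothing is missing.
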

The group $\Hom_c(G',A)$ of continuous homomorphisms from $G'$ to $A$ acts simply transitively on the set of lifts $\hat \varphi$ of $\varphi$ by the prescription $(\chi \odot \hat \varphi)(g)=\chi(g) \hat \varphi(g)$. Therefore, when $\varphi$ lifts to $E$, the set of lifts is in bijection with $\Hom_c(G',A)$.

  \subsection{An Exact Sequence}
Let $G$ be a topological group, and put $\pi_0=\pi_0(G)$. The inclusion map $\iota$ and quotient map $\qoppa$ (``qoppa'', the Greek $q$) give an exact sequence
\begin{equation} \label{qoppa}
1 \to G^\circ \overset{\iota}{\to} G \overset{\qoppa}{\to} \pi_0 \to 1.
\end{equation}

  \begin{prop} \label{extexact} The following sequence is exact.  
  \beq
  0 \to \mb E(\pi_0,A) \overset{\qoppa^*}{\to} \mb E(G,A) \overset{\iota^*}{\to} \mb E(G^\circ,A).
  \eeq
 
  \end{prop}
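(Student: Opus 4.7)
The proof has three parts: (i) injectivity of $\qoppa^*$; (ii) vanishing of the composite $\iota^* \circ \qoppa^*$; (iii) the reverse containment $\ker \iota^* \subseteq \im \qoppa^*$. I assume implicitly that $G$ is locally path-connected, so that $\pi_0$ is discrete and every $\ms E \in \mb E(\pi_0, A)$ has discrete total space.

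\textbf{Injectivity and the composition.} Suppose $\ms E = (E, i, p) \in \mb E(\pi_0, A)$ has $\qoppa^*(\ms E) \cong \ms E_0$. A trivialization amounts to a continuous homomorphism $\sigma : G \to E$ with $p \circ \sigma = \qoppa$. Since $E$ is discrete, $\sigma(1_G) = 1_E$, and $G^\circ$ is connected, we have $\sigma(G^\circ) = \{1_E\}$, so $\sigma$ descends to a splitting $\pi_0 \to E$ of $p$; hence $\ms E \cong \ms E_0$. For part (ii), Lemma \ref{likely.commutes} gives $\iota^* \circ \qoppa^* = (\qoppa \circ \iota)^*$, and pullback along the constant homomorphism $\qoppa \circ \iota$ sends every extension to the trivial one.

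\textbf{The main construction.} Given $\ms E = (E, i, p) \in \mb E(G, A)$ with $\iota^*(\ms E) \cong 0$, fix a continuous homomorphism $s : G^\circ \to E$ splitting $p$ over $H := p^{-1}(G^\circ)$. First, I claim $s(G^\circ)$ is normal in $E$: for fixed $e \in E$, the assignment
\beq
g \longmapsto e\, s(g)\, e^{-1} \cdot s\!\left(p(e)\, g\, p(e)^{-1}\right)^{-1}
\eeq
is a continuous map $G^\circ \to A$ (both factors lift $p(e) g p(e)^{-1}$, so their ratio lies in $\ker p$), vanishes at $g = 1_G$, and is therefore identically $1$ by connectedness of $G^\circ$ and discreteness of $A$. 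Thus $e\, s(g)\, e^{-1} = s(p(e) g p(e)^{-1}) \in s(G^\circ)$. Next, $H$ is clopen in $E$ because $G^\circ$ is clopen in $G$, and the isomorphism $A \times G^\circ \xrightarrow{\sim} H$, $(a,g) \mapsto i(a) s(g)$, identifies $s(G^\circ)$ with $\{0\} \times G^\circ$, which is closed in $H$ and hence in $E$. Define $F := E / s(G^\circ)$ with the quotient topology; then $F$ is a Hausdorff topological group, the composite $A \hookrightarrow E \twoheadrightarrow F$ embeds $A$ onto the clopen central discrete subgroup $H/s(G^\circ) \cong A$, and the map $\bar p: F \to \pi_0$ induced by $\qoppa \circ p$ is an open continuous surjection with kernel $A$. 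This gives $\ms F := (F, \bar i, \bar p) \in \mb E(\pi_0, A)$. Finally, the commutative square
\beq
\xymatrix{ E \ar[r]^{p} \ar[d] & G \ar[d]^{\qoppa} \\ F \ar[r]^{\bar p} & \pi_0 }
\eeq
induces via the universal property of the pullback a morphism $\ms E \to \qoppa^*(\ms F)$ of central extensions of $G$ by $A$, which is automatically an equivalence by the parenthetical remark following \eqref{first.ext}.

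\textbf{Main obstacle.} The technical heart is the construction of $\ms F$: verifying that $s(G^\circ)$ is a closed normal subgroup of $E$, and then that the quotient $F$ has all the structure required of a topological central extension. Both hinge on the clopen splitting $H \cong A \times G^\circ$ furnished by $s$, together with discreteness of $A$ in $E$.
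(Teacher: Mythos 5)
Your proof is correct and follows essentially the same route as the paper's: injectivity of $\qoppa^*$ by observing that a splitting of $\qoppa^*\ms E$ must kill $G^\circ$ (connectedness against discreteness) and hence descends to a splitting of $\ms E$, and exactness at $\mb E(G,A)$ by quotienting $E$ by the image $s(G^\circ)$ of the splitting of $\iota^*\ms E$ and identifying the pullback of the resulting extension of $\pi_0$ with $\ms E$. You supply several checks the paper leaves implicit (normality and closedness of $s(G^\circ)$, the topological structure of the quotient, and the vanishing of $\iota^*\circ\qoppa^*$); the only quibble is that the contravariant functoriality $(\qoppa\circ\iota)^*=\iota^*\circ\qoppa^*$ is not the content of Lemma \ref{likely.commutes}, which concerns the commutation of pullback with pushout, though the fact you actually need is elementary.
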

  
  \begin{proof}

First we prove $\qoppa^*: \mb E(\pi_0, A) \to \mb E(G,A)$ is injective.  Let $\ms E=(E,p,i) \in {\bf E}(\pi_0, A)$, and suppose $\qoppa^* \ms E$ is trivial. Let $s': G \to E'$ be a splitting of $p'$, and write $\tilde \qoppa: E' \to E$ for the projection. 

 \beq
  	\xymatrix{ & A \ar[d] & A \ar[d]\\
& E' \ar[r]^{\widetilde{\qoppa}} \ar[d]^{p'} & E \ar[d]^p\\
G^\circ \ar[r]^{\iota}& G \ar[r]^{\qoppa} \ar@/^/[u]^{s'}  & \pi_0\\ 	
}
\eeq

Since $p \circ \tilde \qoppa  \circ s'  \circ \iota=\qoppa \circ \iota= 1$,i.e., is trivial map, the image of   
\beq
  \tilde \qoppa   \circ s'  \circ \iota: G^\circ \to E
\eeq
 lies in $A$. Since $G^\circ$ is connected, it must be trivial. So the map $\tilde \qoppa  \circ s': G \to E$  descends to a map $s: \pi_0 \to E$ which is a splitting of $p$. (Since $p \circ \tilde \qoppa \circ s'=\qoppa$.) Hence $\qoppa^*$ is injective.

 Next, we show the sequence is exact at $\mb E(G,A)$. So let $\ms E=(E,p,i) \in {\bf E}(G,A)$ with $\iota^* \ms E=(E',p',i')$ trivial. 
 We can identify $E'$ with the normal subgroup $p^{-1}(G^\circ)$ of $E$. Let $s': G^\circ \to E'$ be a splitting; its image $E^0$ is the connected component 
 $(E')^\circ$.  Hence it is normalized by conjugation from $E$. Put  $E_0=E/E^0$,
 and note that $p$ descends to $p_0: E_0 \to \pi_0$. Moreover put $i_0=\tilde \qoppa \circ i: A \to E_0$. Then $\ms E_0=(E_0,p_0,i_0) \in \mb E(\pi_0,A)$ and $\qoppa^* \ms E_0=\ms E$. 

 \end{proof}
 
When $H  \unlhd G$ is a normal subgroup, the group $G$ acts on $H$ by conjugation, and hence  on $\mb E(H,A)$. This action descends to an action of $G/H$ on $\mb E(H,A)$ by Corollary \ref{inner.triv.action}. 

\begin{lemma} \label{rest.fixed.pts} The image of the restriction map $\mb E(G,A) \to \mb E(H,A)$ is fixed by the action of $G$ on $\mb E(H,A)$.
\end{lemma}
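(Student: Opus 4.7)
The plan is to verify the fixedness via Lemma \ref{liftingtheta}, which reduces the question to producing, for every $x \in G$, an automorphism of the restricted extension that covers $\Int(x)|_H$.

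Let $\ms E = (E,i,p) \in {\bf E}(G,A)$, and write $\res(\ms E) = (E_H, i, p|_{E_H})$ where $E_H := p^{-1}(H)$. Since $H \unlhd G$, the preimage $E_H$ is normal in $E$. Given $x \in G$, lift it to some $y \in E$ with $p(y) = x$. Then conjugation $\Int(y): E \to E$ preserves the normal subgroup $E_H$, so it restricts to $\widetilde\theta := \Int(y)|_{E_H} \in \Aut(E_H)$. One checks directly that $p|_{E_H} \circ \widetilde\theta = \Int(x)|_H \circ p|_{E_H}$, i.e., that the square
\beq
\xymatrix{
E_H \ar[r]^{\widetilde\theta} \ar[d]^{p|_{E_H}} & E_H \ar[d]^{p|_{E_H}} \\
H \ar[r]^{\Int(x)|_H} & H
}
\eeq
commutes. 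By Lemma \ref{liftingtheta} applied to $H$, the automorphism $\Int(x)|_H \in \Aut(H)$, and the extension $\res(\ms E) \in {\bf E}(H,A)$, we conclude $(\Int(x)|_H)^*(\res \ms E) \cong \res \ms E$ in $\mb E(H,A)$.

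This says precisely that $\res(\ms E)$ is fixed by the action of each $x \in G$, which establishes the claim. The only minor point to verify is that $\widetilde \theta$ fixes $i(A)$ pointwise (so that it is really a morphism in ${\bf E}(H,A)$); this follows because $i(A)$ is central in $E$. I do not anticipate any real obstacle here—the statement is essentially a formal consequence of Lemma \ref{liftingtheta} once one notices that any preimage $y$ of $x$ supplies the required conjugating element, mirroring the proof of Corollary \ref{inner.triv.action} in the restricted setting.
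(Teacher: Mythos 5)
Your proof is correct and follows the same route as the paper: pick a preimage $y\in E$ of $x$, observe that $\Int(y)$ preserves $E_H=p^{-1}(H)$ and covers $\Int(x)|_H$, and conclude via Lemma \ref{liftingtheta}. The extra observation that $\Int(y)$ fixes $i(A)$ pointwise (by centrality) is a worthwhile detail the paper leaves implicit.
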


\begin{proof} 
Let $\ms E =(E,i,p) \in {\bf E}(G,A)$ and $g \in G$. Pick $y \in E$ with $p(y)=g$. Let $E_H$ be the preimage of $H$ under $p$; it is a normal subgroup of $E$ containing the image of $i$. The restriction map takes $\ms E$ to $\ms E_H=(E_H, p|_{E_H},i)$.
It is easy to see that $\Int(y)$ covers $\Int(g)$, so $\Int(g)^*(\ms E_H) \cong \ms E_H$ by Lemma \ref{liftingtheta}.
\end{proof}

Thus Proposition \ref{extexact} may be refined to the exact sequence
  \begin{equation} \label{left.ex.here}
  0 \to \mb E(\pi_0(G),A) \to \mb E(G,A) \to \mb E(G^\circ,A)^{\pi_0(G)}.
  \end{equation}

In the next section we give a counterexample to show that this sequence  is not right exact in general, however in the subsequent section we show right exactness under a certain hypothesis on $G$.

  \subsection{The Weil group} \label{Weil.subs}

\begin{example} A classic example of a nonsplit extension is $G=W_\R$, the \emph{Weil group} of $\R$. See for instance \cite[(1.4.3)]{Tate}. The Weil group fits into an exact sequence
\beq
1 \to \mb C^\times \to W_{\R} \to \Gamma \to 1,
\eeq
where $\Gamma$ is the Galois group of $\C$ over $\R$. Briefly, the group $W_{\R}$ is a union $\C^\times \cup j \C^\times$, with multiplication  rules $j^2=-1 \in \C^\times$, and $jzj^{-1}=\ol z$.

Let $Q$ be the subgroup of $W_{\R}$ generated by $i \in \C^\times$ and $j$; it is evidently quaternion of order $8$, and contains the subgroup $\mu_2=\{ \pm 1\}$.

\begin{prop} \label{weil.prop} The restriction map $\mb E(W_\R,\Z/2\Z) \to \mb E(\C^\times,\Z/2\Z)^\Gamma$ is not surjective.
\end{prop}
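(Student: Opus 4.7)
The plan is to exhibit a specific $\Gamma$-invariant extension $\ms E_0 \in \mb E(\C^\times, \Z/2\Z)$ that is not in the image of the restriction map. I take $\ms E_0$ to be the squaring extension
\beq
\ms E_0: \; 0 \to \mu_2 \to \C^\times \xrightarrow{z \mapsto z^2} \C^\times \to 1.
\eeq
By Corollary \ref{inner.triv.action}, the $\Gamma$-action on $\mb E(\C^\times, \Z/2\Z)$ is induced by the automorphism $\bar\sigma: \C^\times \to \C^\times$, $z \mapsto \bar z$, coming from $\Int(j)$. The identity $\overline{z^2} = \bar z^2$ then supplies the explicit equivalence $\bar\sigma^* \ms E_0 \cong \ms E_0$, so $\ms E_0$ is $\Gamma$-invariant.

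Next, suppose for contradiction that $\ms E_0 \cong \iota^* \ms E$ for some $\ms E = (E, i, p) \in \mb E(W_\R, \Z/2\Z)$. The subgroup $\mu_2 = \{\pm 1\}$ sits inside $W_\R$ in two ways: through $\C^\times$ and through the quaternion subgroup $Q$. By functoriality of pullback (Lemma \ref{likely.commutes}), the restriction of $\ms E$ along $\mu_2 \hookrightarrow W_\R$ can be computed either from $\ms E_0 = \ms E|_{\C^\times}$ or from $\ms E|_Q \in \mb E(Q, \Z/2\Z)$. Along the first route, the preimage of $\mu_2$ under $z \mapsto z^2$ is $\mu_4 \cong \Z/4\Z$, which represents the \emph{nontrivial} class in $\mb E(\mu_2, \Z/2\Z) \cong \Z/2\Z$.

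Along the second route, the extension is trivial, since the restriction map $\mb E(Q, \Z/2\Z) \to \mb E(\mu_2, \Z/2\Z)$ is identically zero; the two computations then disagree, giving the desired contradiction. Verifying this last claim about $Q$ is the main obstacle. It amounts to showing that $\res: H^2(Q, \Z/2\Z) \to H^2(\mu_2, \Z/2\Z)$ vanishes, which I would establish via the Lyndon--Hochschild--Serre spectral sequence for $1 \to \mu_2 \to Q \to (\Z/2\Z)^2 \to 1$: one checks that $H^2(Q, \Z/2\Z) \cong (\Z/2\Z)^2$ consists entirely of the $E_\infty^{2,0}$-term, hence of inflations from $(\Z/2\Z)^2$, and such inflated classes restrict trivially to the fiber $\mu_2$. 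Equivalently, at the level of extensions, for any central extension $0 \to \Z/2\Z \to \widetilde Q \to Q \to 1$ the square $\tilde i^2$ of any lift of $i \in Q$ is an involution lifting $-1$, so the restricted extension splits.
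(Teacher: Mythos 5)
Your proposal is correct and follows essentially the same route as the paper: the same squaring extension of $\C^\times$, the same $\Gamma$-invariance check via $\overline{z^2}=\bar z^2$, and the same contradiction obtained by restricting to $\mu_2$ two ways, through $\C^\times$ (giving the nontrivial $\mu_4$-extension) and through the quaternion subgroup $Q$ (giving the trivial one). The only divergence is in justifying that $\res\colon H^2(Q,\Z/2\Z)\to H^2(\mu_2,\Z/2\Z)$ vanishes — you use the Lyndon--Hochschild--Serre spectral sequence for $1\to\mu_2\to Q\to(\Z/2\Z)^2\to 1$, whereas the paper cites the known ring structure of $H^*(Q,\Z/2\Z)$ (namely that $H^2$ lies in the subring generated by $H^1$, which restricts trivially since $\mu_2=[Q,Q]$); both are sound.
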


\begin{proof} Let $\Sq: \C^\times \to \C^\times$ be the squaring map. Since $\Sq$ commutes with complex conjugation, this determines an extension  $\ms E_\bullet \in \mb E(\C^\times,\Z/2\Z)^\Gamma$ by Lemma \ref{liftingtheta}. Let $A = \Z/2\Z$. From the inclusions
\beq
\xymatrix{\mu_2  \ar[r] \ar[d] & G^\circ \ar[d]\\
Q \ar[r]   & G}
\eeq
we obtain the commutative diagram
\beq
\xymatrix{ \mb E(W_{\R}, A) \ar[d] \ar[r]& \mb E(\C^\times, A)  \ar[d]  \\
	\mb E(Q, A) \ar[r] & \mb E(\mu_2, A). }
	\eeq

Let us write $\ms E_2=(E_2,i_2,p_2)$ for the restriction of $\ms E_\bullet$ to $\mu_2$. Then $\ms E_2$ is nontrivial, since $E_2$ contains elements of order $4$.

It is easy to see that restriction  $\HH^1(Q,A) \to \HH^1(\mu_2,A)$ is the zero map, since each linear character of $Q$ is trivial on $\mu_2$, the derived group of $Q$.
From the description of $\HH^*(Q,A)$ in  \cite[Example 1.9, page 165]{Milgram}  or \cite[Section 3.2]{Malik.Spallone.SL}, one sees that the second cohomology group $\HH^2(Q,A)$ is contained in the subring of $\HH^*(Q,A)$ generated by $\HH^1(Q,A)$. Therefore the restriction $\HH^2(Q,A) \to \HH^2(\mu_2,A)$ is also the zero map, as claimed. In particular, the extension $\ms E_2$ is not in the image.

If there were an extension $\ms E \in \mb E(W_{\R},A)$ restricting to $\ms E_\bullet$, then its restriction to $Q$ would further restrict to $\ms E_2$, contradicting the above.
\end{proof}

\end{example}
  
   \subsection{Extensions of Semidirect Products}

   Let $H$ be a connected group, and $\Gamma$ a group acting discretely on $H$. 
We may form the semidirect product $G= H \rtimes \Gamma$.

 \begin{prop}\label{surj}  With the above notation, the restriction map
   \beq
   \mb E(G,A) \to \mb E (H,A)^{\Gamma}
  \eeq
  is a surjection.
  \end{prop}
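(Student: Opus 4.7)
My plan is to construct, given a $\Gamma$-invariant extension $\ms E=(E_H,i_H,p_H) \in \mb E(H,A)^\Gamma$, a semidirect product extension $E_H \rtimes \Gamma$, where the $\Gamma$-action on $E_H$ is obtained by canonically lifting the $\Gamma$-action on $H$. The key observation is that connectedness of $H$ plus discreteness of $A$ forces such a lift to be unique, which makes everything coherent for free.

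First I would invoke Lemma \ref{liftingtheta}: for each $\gamma \in \Gamma$, the hypothesis $\gamma^* \ms E \cong \ms E$ produces some $\tilde\gamma \in \Aut(E_H)$ covering $\gamma$ and restricting to the identity on $A \subset E_H$. Next I would prove uniqueness: if $\tilde\gamma_1, \tilde\gamma_2$ are two such lifts, then $\phi = \tilde\gamma_1 \tilde\gamma_2^{-1}$ covers $\id_H$ and fixes $A$ pointwise, so $\phi(e) = e \cdot i_H(c(e))$ for a continuous map $c: E_H \to A$. The identity $p_H \phi = p_H$ together with centrality of $A$ forces $c$ to be a continuous homomorphism, and $\phi|_A = \id$ forces $c$ to factor through a continuous homomorphism $\bar c: H \to A$. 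But $H$ is connected and $A$ is discrete, so $\bar c = 0$ and hence $\tilde\gamma_1 = \tilde\gamma_2$.

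Uniqueness then does all the heavy lifting: $\widetilde{\gamma_1\gamma_2}$ and $\tilde\gamma_1 \tilde\gamma_2$ are both lifts of $\gamma_1\gamma_2$ fixing $A$, so they coincide, and $\gamma \mapsto \tilde\gamma$ is a homomorphism $\Gamma \to \Aut(E_H)$. Since $\Gamma$ is discrete, continuity of the resulting action map $E_H \times \Gamma \to E_H$ reduces to continuity of each $\tilde\gamma$, which holds by construction. I would then form the topological semidirect product
\beq
E = E_H \rtimes \Gamma,
\eeq
equipped with $p(e,\gamma) = (p_H(e),\gamma) \in H \rtimes \Gamma = G$ and $i(a) = (i_H(a),1)$.

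Finally I would verify that $\ms E' = (E,i,p) \in {\bf E}(G,A)$: the map $p$ is continuous, surjective, and open (the latter because $\Gamma$ is discrete, so the topology on $G$ is the product topology and $p_H$ is open); $\ker p = \im i$ by a direct check; centrality of $\im i$ in $E$ follows since $\tilde\gamma$ fixes $A$ pointwise and $A$ is already central in $E_H$. Pulling back along the inclusion $H \hookrightarrow G$ recovers $E_H \times \{1\} \cong E_H$ with the original $i_H, p_H$, so $\ms E'$ restricts to $\ms E$, proving surjectivity. I expect the only real obstacle is the uniqueness step; once that is secured the rest is formal.
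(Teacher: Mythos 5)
Your proof is correct, and while the overall strategy (uniquely lift the $\Gamma$-action to $E_H$, then form $E_H\rtimes\Gamma$) is the same as the paper's, your uniqueness argument is genuinely different in a way that streamlines the whole proof. The paper establishes uniqueness of the lift by observing that the discrepancy between two lifts is a continuous $A$-valued function on $E_\bullet$, hence constant \emph{when $E_\bullet$ is connected}; since the total space of the extension need not be connected, the paper must then handle the disconnected case separately by passing to $E_\bullet^\circ$, the subgroup $A_1=A\cap E_\bullet^\circ$, and a pushforward along $A_1\hookrightarrow A$ (invoking Lemma \ref{likely.commutes}). You instead show that the discrepancy $c\colon E_H\to A$ is a continuous homomorphism killing $i_H(A)=\ker p_H$, hence descends through the quotient map $p_H$ to a continuous homomorphism $\bar c\colon H\to A$, which vanishes because $H$ is connected and $A$ is discrete. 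This uses connectedness of $H$ (which is a standing hypothesis) rather than of $E_H$, so no case split is needed and the second half of the paper's proof becomes superfluous. The remaining verifications in your write-up (the homomorphism property of $\gamma\mapsto\tilde\gamma$ via uniqueness, continuity of the action since $\Gamma$ is discrete, centrality of $i(A)$ in the semidirect product because each $\tilde\gamma$ fixes $A$ pointwise, and the identification of the restriction to $H$) all check out; the one point worth making explicit is that the lift supplied by Lemma \ref{liftingtheta} does fix $i_H(A)$ pointwise, which follows from its proof since an equivalence of extensions is required to commute with the inclusions of $A$.
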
   
   
  \begin{proof} 
    Let  $\ms E_\bullet=(E_\bullet, i_\bullet,p_\bullet)  \in  \mb E (H,A)^{\Gamma}$. Suppose first that $E_\bullet$ is connected. 
    For each $\theta \in \Gamma$, there is a  lift $\tilde \theta$ of $\theta$ to $E_\bullet$ by Lemma \ref{liftingtheta}.
   In fact $\tilde \theta$ is unique:  Suppose $\tilde{\theta}_1$ and $\tilde{\theta}_2$ are lifts of $\theta$. Then we may define a continuous function $f: E_\bullet \to A$ by 
   \beq
   f(e^0)=\tilde{\theta_2}^{-1}(e^0)\tilde{\theta_2}(e^0).
   \eeq
    Since $E_\bullet$ is connected, $f$ is constant, and we deduce that $\tilde{\theta_1} = \tilde{\theta_2}$. The prescription $\theta \mapsto \tilde \theta$ is therefore a well-defined map $\Gamma \to \Aut(E_\bullet)$,
necessarily a homomorphism.  This corresponds to an action of $\Gamma$   on $E_\bullet$, and so we may form the  semidirect product $E=E_\bullet \rtimes \Gamma$. Clearly this gives an extension of $G$ by $A$  whose restriction to $H$ is  $E_\bullet$.
   
For $E_\bullet$ disconnected, write $E_\bullet^\circ$ for the connected component of $1$. Since $p_\bullet$ is an open map, the image of $E_\bullet^\circ$ is open in $H$, and hence closed. It follows that  $p_\bullet(E_\bullet^\circ)=H$. Let $A_1=A \cap E_\bullet^\circ$, and write $i_1: A_1 \hookrightarrow E_\bullet^\circ$ for the inclusion.
Then 
\beq
\ms E_\bullet':= (E_\bullet^\circ, i_1,p_\bullet|_{E_\bullet^\circ})
\eeq
 is a $\Gamma$-invariant $A_1$-extension of $G$, and $(E_\bullet^\circ)_{A} \cong E_\bullet$.

Recall that by Proposition \ref{likely.commutes}, the diagram
\beq
\xymatrix{
	\mb E(G,A_1) \ar[r]  \ar[d]& \mb E(H,A_1)^{\Gamma} \ar[d]\\
	 \mb E(G,A) \ar[r] & \mb E(H,A)^{\Gamma}\\
}
\eeq 
 commutes. Since $E_\bullet^\circ$ is connected, we have a lift $\ms E_1 \in \mb E(G,A_1)$ of $\ms E_\bullet'$, and this maps down to the required lift $\ms E$ of $\ms E_\bullet$.   \end{proof}
   

 \section{$G$-bundles} \label{G.bund.sec}

In this section we review the theory of (principal) $G$-bundles.  When $G$ is abelian, there is a Baer sum. When $G$ is discrete, $G$-bundles over a nice connected base are  classified by homomorphisms from the fundamental group into $G$.

\subsection{Definition} \label{g.bundle.defs}
 Let $G$ be a topological group, and $B$ a space. Let $X$ be a space with a right $G$-action, and $p:X \to B$ is a map. The pair $(X,p)$ is a \emph{$G$-bundle}, provided for all $b \in B$, there is a neighborhood $U \ni b$ and a homeomorphism $h_U: U \times G \to p^{-1}(U)$ so that for all $u \in U$ and $g,g' \in G$, we have $p(h_U(u,g))=u$ and $h_U(u,gg')=  h_U(u,g) \cdot g'$.

 A \emph{morphism} of $G$-bundles over $B$ is a map $f: X \to X'$ with $f(x \cdot g)=f(x) \cdot g$. Write $\Bun_B(G)$ for the category of $G$-bundles with base space $B$. Write $\bun_B(G)$ for the set of $G$-bundles modulo isomorphism.
 
Let $p:X \to B$ be a $G$-bundle. When $B'$ is another space and $f: B' \to B$ is a map, write $f^*(X) \subseteq B' \times X$ for the pullback. Projection of the pullback to $B'$ is a $G$-bundle, and projection to $X$  gives an \emph{overmap} $\tilde f: f^*(X) \to X$ which is $G$-equivariant.

\begin{lemma} \cite[Corollaire, page 94]{Bou.AT} \label{bou.pullback}  Let $(X,p)$ be a $G$-bundle over a space $B$ and $(X',p')$ be a $G$-bundle over a space $B'$. Suppose that $f: B' \to B$ and $f': X' \to X$ are continuous with $p \circ f'=f \circ p'$, and moreover that $f'$ is $G$-equivariant. Then 
\beq
	\xymatrix{
		X' \ar[r]^{f'} \ar[d]^{p'} &X \ar[d]^{p}\\
		B' \ar[r]^{f}   & B  \\
	}
	\eeq 
	is a pullback diagram.
  \end{lemma}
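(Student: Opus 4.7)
The plan is to construct the natural comparison map from $X'$ into the pullback $f^*(X) \subseteq B' \times_k X$ and to verify it is an isomorphism of $G$-bundles over $B'$. Define
\beq
\Phi: X' \to f^*(X), \qquad \Phi(x') = (p'(x'), f'(x')).
\eeq
The hypothesis $p \circ f' = f \circ p'$ says precisely that $\Phi(x')$ satisfies the pullback condition, so $\Phi$ is well defined; continuity into $B' \times_k X$ is immediate from continuity of $p'$ and $f'$ together with the universal property of the CG-topology on the product. Since $f'$ is $G$-equivariant and $p'$ is $G$-invariant, $\Phi$ is $G$-equivariant and covers the identity on $B'$, so it is a morphism in $\Bun_{B'}(G)$.

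To prove $\Phi$ is an isomorphism, I would argue locally using trivializations of both bundles. Pick $b' \in B'$, choose a trivializing neighborhood $V \ni b'$ for $X'$ with trivialization $h'_V: V \times G \xrightarrow{\sim} (p')^{-1}(V)$, and (shrinking $V$ and using regularity of $B$ as in the remark after the definition of $G$-bundle) arrange that $f(V) \subseteq U$ for some trivializing neighborhood $U$ for $X$ with trivialization $h_U: U \times G \xrightarrow{\sim} p^{-1}(U)$. In these charts, $f^*(X)|_V$ is identified with $V \times G$ via $(v,g) \mapsto (v, h_U(f(v),g))$, and the restriction $\Phi|_V$ becomes a $G$-equivariant map $V \times G \to V \times G$ over $V$. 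Any such map has the form $(v,g) \mapsto (v, \varphi(v) g)$, where $\varphi: V \to G$ is read off from the $G$-component of the image of $(v,1)$; continuity of $\varphi$ follows from continuity of $\Phi$ and of the trivializations. This explicit formula is visibly a homeomorphism, with inverse $(v,h) \mapsto (v, \varphi(v)^{-1} h)$. Thus $\Phi$ is a local homeomorphism, and being a bijection on the nose, a homeomorphism.

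The main obstacle I anticipate is the bookkeeping for the CG-topology: the pullback is formed in $B' \times_k X$ rather than $B' \times X$, so one must confirm that the topology transported across the trivializations really coincides with the CG-product topology on the relevant opens. This is handled by the remark following the definition of $G$-bundle, which uses regularity of the base together with \cite[Lemma A.1]{Uribe} to ensure that $U \times_k G = U \times G$ on the trivializing opens; once this is in hand, the argument above goes through without further subtlety, and the local trivializations assemble to show $\Phi$ is an isomorphism of $G$-bundles, which is exactly the universal property making the square a pullback.
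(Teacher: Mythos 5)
Your argument is correct. Note that the paper itself does not prove this lemma; it is quoted from Bourbaki (the cited Corollaire), so there is no in-paper proof to match. Your route is the standard one underlying that reference: build the comparison morphism $\Phi(x')=(p'(x'),f'(x'))$ into $f^*(X)$, observe it is a $G$-bundle morphism over $\mathrm{id}_{B'}$, and invoke (with proof) the fact that any such morphism is an isomorphism, via the local normal form $(v,g)\mapsto(v,\varphi(v)g)$ in compatible trivializations. Two small points worth making explicit: continuity of the local inverse $(v,h)\mapsto(v,\varphi(v)^{-1}h)$ uses continuity of inversion in the topological group $G$ (available since $G$ is a topological group, and the CG-topology refines the product topology, so maps continuous for the product topology remain continuous for $\times_k$); and at the very end you should say one sentence about why $\Phi$ being an isomorphism forces the outer square to be a pullback, namely that the square factors through the cartesian square defining $f^*(X)$, and composing a cartesian square with an isomorphism over the same base yields a cartesian square. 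With those remarks your proof is complete and self-contained, which is arguably an improvement over the bare citation.
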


\begin{defn} \label{induced.bundle} Let $(X,p)$ be a $G$-bundle over $B$, let $G'$ be a $k$-group, and $\psi: G \to G'$ a homomorphism. Write $X[G']$ for the quotient of $X \times G'$ by the equivalence relation $(x,g') \sim (x \cdot g,\psi(g)^{-1}g')$,
with $x \in X$, $g \in G$ and $g' \in G'$. There is an evident right action of $G'$ on $X[G']$. 
The map taking the class of $(x,g')$ to $p(x)$ makes $X[G']$ into a $G'$-bundle over $B$, called the $G'$-bundle \emph{induced from} $(X,p)$ by $\psi$.
\end{defn}  
Write $f: X \to X[G']$ for the map over $B$, taking $x$ to the class of $(x,1_{G'})$, and satisfying
\begin{equation} \label{universal4}
f( x \cdot g)= f(x) \cdot \psi(g).
\end{equation} 
Then $f$ is universal in the sense that if $Y$ is a $G'$-bundle over $B$ and $f': X \to Y$ is a map over $B$ satisfying \eqref{universal4}, then there is a unique isomorphism $\theta: X[G'] \overset{\sim}{\to} Y$ as $G'$-bundles over $B$.
(See  \cite[Section 6.61]{Bourbaki.VD}.)

\subsection{Baer Sum of $A$-Bundles}

Let $A$ be an abelian topological group. Given $(X_1,p_1),(X_2,p_2) \in \Bun_B(A)$, let $X_3' \subseteq X_1 \times X_2$ be the fibre product:
\beq
	\xymatrix{
		X_3' \ar[r] \ar[d]^{p_3'} &X_2 \ar[d]^{p_2}\\
		X_1 \ar[r]^{p_1}   & B  \\
	}
	\eeq 
	
If we view $X_3'$ as an $A$-space under the action $(x_1,x_2) \cdot a=(x_1,x_2 \cdot a)$, then $(X_3',p_3')$ is an $A$-bundle.
	
We may also view $X_3'$ as an $A$-space under the anti-diagonal action $(x_1,x_2) \cdot a=(x_1 \cdot a,x_2 \cdot a^{-1})$. Let $X_3$ be the quotient of $X_3'$ by the anti-diagonal $A$-action. Then $X_3$ itself has an $A$-action by $(x_1,x_2) \cdot a=(x_1 \cdot a,x_2)$.
 
 If we define $p_3:X_3 \to B$ by $p_3(x_1,x_2)=p_1(x_1)$, then $(X_3,p_3) \in \Bun_B(A)$. (A common trivialization of $X_1$ and $X_2$ over $B$ will trivialize $X_3$ as well.) We call $(X_3,p_3)$ the \emph{Baer sum} of the two bundles.

 \subsection{Associated Long Exact Sequence}

  \begin{thm}  \label{homotopy.fibration}  Let $p:X \to B$ be a $G$-bundle, with $B$ path-connected. Let $x \in X$ with $p(x)=b$. Then there is a long exact sequence
  \beq
\cdots \to \pi_n(G,1_G)  \to  \pi_n(X,x) \to  \pi_n(B,b)  \to  \pi_{n-1}(G,1_G)  \to  \cdots,
\eeq
of groups, which ends with
\beq
\cdots \to \pi_1(B,b) \to \pi_0(G,1_G).
\eeq
When $X$ is path-connected, the last map is surjective. The image of $\pi_1(G)=\pi_1(G,1_G)$ in $\pi_1(X)=\pi_1(X,x)$ is contained in the center of $\pi_1(X)$.
 \end{thm}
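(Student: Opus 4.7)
The plan is to realize this as the standard long exact sequence of a Serre fibration. The first step is to verify that $p: X \to B$ has the homotopy lifting property with respect to cubes $I^n$. Given a homotopy $F: I^n \times I \to B$, I would cover its image by trivializing open sets $U_i$, subdivide $I^n \times I$ finely enough (via the Lebesgue number lemma) that each subcube lands inside some $U_i$, and lift subcube by subcube using the local product structure $h_{U_i}: U_i \times G \to p^{-1}(U_i)$. This exhibits $p$ as a Serre fibration. After identifying the fiber $p^{-1}(b)$ with $G$ via $h_U(b, \cdot)$, arranged so that $1_G \mapsto x$, the standard long exact sequence of a fibration yields the displayed sequence of homotopy groups.

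The sequence terminates at $\pi_0(G, 1_G)$ because $\pi_0$ is only a pointed set in general. The connecting map $\partial: \pi_1(B, b) \to \pi_0(G, 1_G)$ sends a loop $\gamma$ to the component of $\tilde\gamma(1) \in p^{-1}(b) \cong G$, where $\tilde\gamma$ is any lift of $\gamma$ to $X$ starting at $x$. For surjectivity of $\partial$ when $X$ is path-connected: given $g \in G$, select any path $\tilde\gamma$ in $X$ from $x$ to $x \cdot g$; its projection $p \circ \tilde\gamma$ is a loop at $b$, and by construction its image under $\partial$ is the component of $g$.

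For the centrality assertion, let $\alpha: (I, \partial I) \to (G, 1_G)$ represent a class in $\pi_1(G, 1_G)$, so that $\tilde\alpha(t) := x \cdot \alpha(t)$ represents its image in $\pi_1(X, x)$. For an arbitrary loop $\beta: (I, \partial I) \to (X, x)$, I would consider the map
\[
H: I \times I \to X, \qquad H(s, t) = \beta(s) \cdot \alpha(t),
\]
which is continuous (the compact domain factors through $X \times_k G$ without trouble). Tracing $H$ around the boundary of the square shows that the commutator loop $\beta \cdot \tilde\alpha \cdot \beta^{-1} \cdot \tilde\alpha^{-1}$ bounds a disk in $X$ and hence vanishes in $\pi_1(X, x)$; this proves that $[\tilde\alpha]$ commutes with $[\beta]$.

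The main obstacle I anticipate is the fibration verification itself: since $X$ is only assumed CG rather than a CW-complex, and the $G$-action is continuous with respect to $X \times_k G$, one must take a little care. However, regularity of $B$ permits shrinking each $U_i$ to a regular open set on which the product and CG-product topologies coincide (cf.\ the remark following Definition of $G$-bundles in Section \ref{g.bundle.defs}), so the cube-by-cube lifting poses no essential difficulty.
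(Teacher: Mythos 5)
Your proposal is correct and follows essentially the same route as the paper: the paper delegates the long exact sequence and the surjectivity statement to Steenrod's treatment of fibre bundles (Section 17.11 of \emph{The Topology of Fibre Bundles}), which is exactly the Serre-fibration argument you spell out, and it derives centrality from the action of $\pi_1(G)$ on $\pi_1(X)$ induced by the $G$-action via an interchange law $(a*b)\circ(x*y)=(a\circ x)*(b\circ y)$. Your explicit homotopy $H(s,t)=\beta(s)\cdot\alpha(t)$ is just the geometric form of that same interchange argument, and your attention to the $X\times_k G$ continuity issue is handled correctly.
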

 
 \begin{proof}
 See \cite[Section 17.11]{Steenrod.FB}, except for the last point. The group action of $G$ on $X$ induces an action of $\pi_1(G)$ on $\pi_1(X)$. 
 One checks that for loops $a,b$ in $G$ based at $1$, and loops $\gm,\sigma \in X$ based at $x$,   we have $(\gamma * \sigma) \circ (a *b)=(\gamma \circ a)*(\sigma \circ b)$, where $*$ is the usual concatenation of paths, and $\circ$ is the action. This gives $(\gamma \cdot  \sigma) \circ (a  \cdot b)=(\gamma \circ a) \cdot (\sigma \circ b)$  for $a,b \in \pi_1(G)$ and $\gm, \sigma \in \pi_1(X)$, where `$\cdot$' is group multiplication.
Now let $a \in \pi_1(G)$; its image in $\pi_1(X)$ is $1_X \circ a$, where $1_X$ is the identity in $\pi_1(X)$.
By the above we have
\beq
\begin{split}
(1_X \circ a) \cdot (\sigma \circ b) &= \sigma \circ (a \cdot b) \\
					&=\sigma \circ (b \cdot a) \\
					&=(\sigma \circ b) \cdot (1_X \circ a), \\
					\end{split}
					\eeq
 hence the image is central.
 \end{proof}

\subsection{$D$-covers}

When $D$ is a discrete group,  a $D$-bundle is necessarily a covering space, often simply called a \emph{$D$-cover}. 
 Let $B$ be path-connected, and $p:(X,x_0) \to (B,b)$ a pointed $D$-cover.  Given   $[\gm] \in \pi_1(B,b)$, let $\tilde \gm$ be the path in $X$ lifting $\gm$ with $\tilde \gm(0)=x_0$. Define $\partial_p([\gm])$ to be the unique $d \in D$ so that $x_0 \cdot d=\tilde \gm(1)$.
 
\begin{thm} \label{groth.}  \cite[Section 14a]{fulton.AT}
The above prescription defines a homomorphism $\partial_p: \pi_1(B,b) \to D$. If $B$ is nice and path-connected, then the correspondence $(X,p) \leadsto \partial_p$ gives a natural bijection
\beq
\{ \textrm{pointed $D$-covers of $(B,b)$} \}/ \text{isom} \overset{\sim}{\to}  \Hom(\pi_1(B,b),D).
\eeq
\end{thm}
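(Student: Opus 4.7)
The plan is to verify in order that $\partial_p$ is a well-defined homomorphism, that the assignment $(X,p)\leadsto\partial_p$ is injective on equivalence classes of pointed $D$-covers, and that it is surjective; naturality is then a bookkeeping check. Only the second and third parts need the niceness hypothesis, and they are where the universal cover enters.

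For the first part I would invoke only path- and homotopy-lifting, which any cover enjoys. If $\gamma\simeq\gamma'$ rel endpoints, the lifted homotopy has endpoint track contained in the discrete fibre $p^{-1}(b)$, hence is constant there, so $\partial_p$ descends to $\pi_1(B,b)$. For multiplicativity, given loops $\gamma_1,\gamma_2$ at $b$ with $d_k:=\partial_p([\gamma_k])$, concatenate the lift of $\gamma_1$ ending at $x_0\cdot d_1$ with the lift of $\gamma_2$ starting at $x_0\cdot d_1$; $D$-equivariance of path-lifting identifies the latter with the $d_1$-translate of the lift of $\gamma_2$ starting at $x_0$, so the combined endpoint reads off the product of the $d_k$'s in the appropriate order.

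For injectivity, given pointed $D$-covers $(X,p),(X',p')$ with $\partial_p=\partial_{p'}$, I would define $\Phi:X\to X'$ by sending $x$ to the endpoint of the $X'$-lift (starting at $x_0'$) of the projection to $B$ of any path from $x_0$ to $x$ in $X$. The equality of the monodromies forces independence of the chosen path, continuity is local in trivializations, and $D$-equivariance is immediate; swapping roles gives the inverse. For surjectivity I would use niceness to produce the universal cover $q:\widetilde B\to B$ with its right $\pi_1(B,b)$-action by deck transformations. Given $\phi\in\Hom(\pi_1(B,b),D)$, set
\beq
X_\phi=(\widetilde B\times D)/\{(y\cdot\sigma,d)\sim(y,\phi(\sigma)\cdot d)\},
\eeq
with right $D$-action from the second factor and projection $p_\phi:X_\phi\to B$ inherited from $q$. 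Evenly covered neighbourhoods of $q$ trivialize $X_\phi$, and basing at $x_0:=[\widetilde b,1_D]$ for a chosen lift $\widetilde b$ of $b$, path-lifting in $X_\phi$ reduces to path-lifting in $\widetilde B$ followed by application of $\phi$, giving $\partial_{p_\phi}=\phi$. Naturality under based maps $B'\to B$ then follows because pullback commutes with the construction of $X_\phi$ and with path-lifting.

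The main obstacle is the injectivity step: one must verify that $\Phi$ is well-defined on homotopy classes of paths (not merely loops), continuous across chart overlaps, and $D$-equivariant with compatible basepoints simultaneously. The other pieces reduce to routine covering-space theory.
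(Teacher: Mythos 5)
The paper does not actually prove this statement; it is imported wholesale from \cite[Section 14a]{fulton.AT}, so your proposal is being measured against the standard covering-space argument rather than against anything written in the text. Your outline is that standard argument, and the well-definedness, multiplicativity, and surjectivity steps are fine as sketched. One point you should not leave implicit: with a right $D$-action and the usual convention $[\gamma_1][\gamma_2]=[\gamma_1*\gamma_2]$, the computation you describe lands at $x_0\cdot(d_2d_1)$, i.e.\ an \emph{anti}-homomorphism; your phrase ``in the appropriate order'' is concealing a convention that has to be fixed once and for all, consistently with the deck-transformation action you use in the surjectivity step. Since the paper applies $\partial_p$ with target $\pi_0(G)$, which need not be abelian, this is not cosmetic.

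The one genuine gap is in the injectivity step. A pointed $D$-cover need not have path-connected total space --- and the bijection forces disconnected covers to appear, since the trivial homomorphism must correspond to $B\times D$ --- yet your map $\Phi$ is only defined at points $x$ joinable to $x_0$ by a path in $X$. The repair is standard: because $B$ is path-connected and the fibre over $b$ is the single orbit $x_0\cdot D$, lifting a path from $p(x)$ to $b$ starting at $x$ shows that $X=X_0\cdot D$, where $X_0$ is the path component of $x_0$. One therefore defines $\Phi$ on $X_0$ by your recipe and extends by $D$-equivariance, checking (via the equality of monodromies, exactly as in your path-independence argument) that for any $d$ with $X_0\cdot d=X_0$ the path-based definition already satisfies $\Phi(x\cdot d)=\Phi(x)\cdot d$, so the extension is consistent. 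With that amendment, and with local path-connectedness of $B$ invoked where you assert continuity in trivializations, the argument goes through and matches what Fulton does.
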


Note that $\partial_p$ agrees with the connecting map for the LES of homotopy groups, as defined in \cite[page 453]{Bredon}.

\subsection{Central Extensions  as Covers.}
Let $G$ be a nice path-connected group, and $\ms E=(E,i,p) \in {\bf E}(G,A)$. 

\begin{lemma} $p:E \to G$ is an $A$-cover.
\end{lemma}

\begin{proof} 
It is enough to see that the action of $A$ on $E$ by left translation is even. Pick a neighborhood $U$ of $1_E$  so that $U \cap i(A)=\{ 1_E\}$, and a neighborhood $V$ of $1_E$  so that $V V^{-1} \subseteq U$.
It is clear that for $0_A \neq a \in A$, the intersection $V \cdot i(a) \cap V =\emptyset$.
\end{proof}

Let $\ms E=(E,i,p) \in {\bf E}(G,A)$.   Since $G$ is locally path-connected, so too is $E$, hence $E^\circ$ is open in $E$.
\begin{lemma} \label{dec.30} We have $E=E^\circ \cdot i(A)$.
\end{lemma}

\begin{proof}	
The subgroup $E^\circ$ and each of its cosets are open, hence $E'=E^\circ \cdot i(A)$ is an open subgroup of $E$. Since $p$ is an open map, the image $p(E')$ is an open subgroup of $G$.  Since $G$ is connected, we have $G=p(E')$, and therefore $E=E'$.
\end{proof}	

\begin{prop} \label{dont.forget} Let $\ms E_1,\ms E_2 \in {\bf E}(G,A)$. If the projections $p_1: E_1 \to G$ and $p_2:E_2 \to G$  are isomorphic as $A$-covers, then  $\ms E_1 \cong \ms E_2$.
\end{prop}

\begin{proof}
Let $f: E_1 \to E_2$ be an isomorphism in the category of $A$-covers of $G$. By translating by $A$ we may assume that $f(1_{E_1})=1_{E_2}$. Consider the map
\beq
E_1^\circ \times E_1^\circ \to A
\eeq
given by $(x,y) \mapsto i_2^{-1}(f(xy)^{-1}f(x)f(y))$. Since the domain is connected and the codomain is discrete, this is in fact  the constant map $(x,y) \mapsto 0_A$. 
Thus the restriction of $f$ to $E_1^\circ$ is a group homomorphism. Using that $E_1=E_1^\circ \cdot i_1(A)$ by Lemma \ref{dec.30}, we see that $f$ itself is a group homomorphism. The result follows.\
\end{proof}

\begin{thm} \label{cmi.thurs}For $G$ a nice path-connected group, the map taking an extension $\ms E =(E,i,p) \in{\bf E}(G,A)$  to the connecting homomorphism $\partial_{\ms E}$ (from the $A$-bundle $p$) is a group isomorphism
\beq
\mb E(G,A)   \overset{\sim}{\to} \Hom(\pi_1(G),A).
\eeq
\end{thm}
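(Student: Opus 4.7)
The statement is essentially a composition of two bijections already in hand, so the plan is to compose the bijection of Theorem~\ref{forget.group} with the bijection of Theorem~\ref{groth.}, and then check that the composite is literally the map $\ms E \mapsto \partial_{\ms E}$.

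First I would verify the hypotheses of Theorem~\ref{groth.} with $B = G$ and $D = A$. Path-connectedness of $G$ is assumed, and niceness of $G$ was recorded in Section~\ref{notprem} as a general property of CW-groups. Hence Theorem~\ref{groth.} supplies a natural bijection
\beq
\{ \text{pointed $A$-covers of } G\}/\text{isom} \overset{\sim}{\to} \Hom(\pi_1(G),A), \qquad (X,p) \mapsto \partial_p.
\eeq
Next, Theorem~\ref{forget.group} (applied to the path-connected CW-group $G$) provides a bijection
\beq
\mb E(G,A) \overset{\sim}{\to} \{ \text{pointed $A$-covers of } G\}/\text{isom},
\eeq
obtained by forgetting the group structure on $E$ and using $1_E$ as the basepoint. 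Composing these two bijections yields a bijection $\mb E(G,A) \overset{\sim}{\to} \Hom(\pi_1(G),A)$.

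Finally I would confirm that this composite is exactly the prescription $\ms E \mapsto \partial_{\ms E}$. By the definition of $\partial_p$ just before Theorem~\ref{groth.}, the connecting homomorphism attached to the pointed $A$-cover $(E,1_E) \overset{p}{\to} (G,1_G)$ sends $[\gamma] \in \pi_1(G,1_G)$ to the unique $a \in A$ with $1_E \cdot a = \tilde\gamma(1)$, where $\tilde\gamma$ is the lift of $\gamma$ starting at $1_E$. This is precisely $\partial_{\ms E}$ as specified in the theorem statement, so the composite bijection is the stated map. There is no genuine obstacle here: once Theorems~\ref{forget.group} and~\ref{groth.} are available, the content of the theorem is simply that these previously constructed bijections line up with the connecting-homomorphism description.
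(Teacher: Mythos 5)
Your proposal is correct and is exactly the paper's argument: the theorem is stated immediately after the line ``Combining Theorem \ref{groth.} with Theorem \ref{forget.group} gives,'' so the intended proof is precisely the composition of those two bijections, with the hypotheses of Theorem \ref{groth.} met because a CW-group is nice and $G$ is assumed path-connected. Your additional check that the composite really is $\ms E \mapsto \partial_{\ms E}$ is a sensible (if routine) verification that the paper leaves implicit.
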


\begin{proof}  
Let $\psi: \pi_1(G) \to A$, and recall from Example \ref{univ.gp} the cental extension $ \ms E_G=(\tilde G,i,p)$ of $G$ by $\pi_1(G)$.  
 Then  $\psi_* \ms E_G \in \mb E(G,A)$, and its connecting homomorphism is $\psi$ by the naturality assertion of Theorem  \ref{groth.}.
 So the map is surjective. By Theorem \ref{groth.} and Proposition \ref{dont.forget}, two central extensions with the same connecting homomorphism are isomorphic. Hence the map $\ms E \mapsto \partial_{\ms E}$ is a bijection.
\end{proof}

\begin{remark} \label{remarkable} Under this bijection, the extension $(\tilde G,i,p)$ of Example \ref{univ.gp} corresponds to the identity map in $\Hom(\pi_1(G),\pi_1(G))$.
\end{remark}

 \section{Topological Group Theory Preliminaries} \label{class.space.section}
 
 In this section we recall the general notion of a classifying space, and some technical bits of topology.
 
 \subsection{Universal $G$-bundles} \label{BG.section}

 \begin{defn} A $G$-bundle   $p:E \to B$ is \emph{universal}, provided $E$ is contractible, and $B$ is paracompact. In this case, we call $B$ a \emph{classifying space} for $G$.
\end{defn}

\begin{example} If $G=\Z$, then $\exp: \C \to \C^\times$ is a universal $\Z$-bundle. So $\C^\times$ is a classifying space for $\Z$.
\end{example}

Let $E_u \to B_u$ be a universal $G$-bundle. When $B$ is paracompact,  the prescription
\beq
f \leadsto (f^*(E_u) \to B)
\eeq
gives a bijection 
 \begin{equation} \label{univ.class.propz}
[B,B_u] \overset{\sim}{\to} \bun_B(G).
\end{equation}
Here  $[X,Y]$ is the set of homotopy classes of maps from $X$ to $Y$. See, for instance \cite[page 45]{Milgram} or \cite[Chapter 4, Section 10]{Husemoller}.  

 \begin{lemma} \label{bu.he} Any two classifying spaces for $G$ are homotopically equivalent.
 \end{lemma}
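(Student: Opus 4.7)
The plan is to exploit the classification bijection \eqref{univ.class.propz} in both directions. Suppose $p_u: E_u \to B_u$ and $p_u': E_u' \to B_u'$ are two universal $G$-bundles. Since $B_u$ is paracompact and $p_u'$ is universal, the bijection \eqref{univ.class.propz} applied to $B=B_u$ and $B_u=B_u'$ gives a homotopy class of maps $B_u \to B_u'$ classifying the isomorphism class of $p_u$. Pick a representative $f: B_u \to B_u'$, so that $f^*(E_u') \cong E_u$ as $G$-bundles over $B_u$. Symmetrically, choose $g: B_u' \to B_u$ with $g^*(E_u) \cong E_u'$ as $G$-bundles over $B_u'$.

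Next, I would form the composites $g \circ f: B_u \to B_u$ and $f \circ g: B_u' \to B_u'$ and compute the bundles they classify. By functoriality of pullback,
\beq
(g \circ f)^*(E_u) \cong f^*(g^*(E_u)) \cong f^*(E_u') \cong E_u,
\eeq
so $g \circ f$ classifies the same element of $\bun_{B_u}(G)$ as the identity map $\id_{B_u}$ (which obviously pulls $E_u$ back to itself). The bijection \eqref{univ.class.propz}, applied with $B=B_u$ and $B_u$ the classifying space $B_u$ itself, then forces $g \circ f \simeq \id_{B_u}$. The symmetric computation yields $f \circ g \simeq \id_{B_u'}$.

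Hence $f$ and $g$ are mutually inverse homotopy equivalences, proving $B_u \simeq B_u'$. The only subtle point is justifying that the classification bijection \eqref{univ.class.propz} can be applied with the classifying space itself in the role of the paracompact base; this is legitimate because the hypothesis ``$B$ paracompact'' in \eqref{univ.class.propz} is satisfied by $B_u$ and $B_u'$ by the very definition of universal bundle. Beyond this, the argument is purely formal manipulation of the universal property, and no further obstacle is expected.
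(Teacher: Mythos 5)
Your argument is correct and is essentially the same as the paper's proof: both use the classification bijection \eqref{univ.class.propz} to produce classifying maps in each direction and conclude that the composites are homotopic to the identities because they pull back the universal bundle to itself. Your extra remark justifying that the classifying spaces themselves may serve as the paracompact base is a reasonable (if minor) clarification that the paper leaves implicit.
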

 
 \begin{proof} Suppose $p: E \to B$ and $p': E' \to B'$ are both universal $G$-bundles. Let $f: B \to B'$ so that $E$ is the pullback of $E'$, and $f': B' \to B$ so that $E'$ is the pullback of $E$. Then $E$ is the pullback of $E$ under $f' \circ f$, and therefore $f' \circ f$ is homotopic to the identity map on $B$. Similarly, $f \circ f'$ is homotopic to the identity map on $B'$, and so $B$ and $B'$ are homotopy equivalent.
 \end{proof}
 
 \subsection{Kelley Groups}
 
 
 
   \begin{defn}
Let $B$ be a Hausdorff space and $b \in B$. We say the pair $(B,b)$ is \emph{well-pointed} when the inclusion  $\{ b\} \hookrightarrow B$ is a cofibration \cite[Chapter VII]{Bredon}. 
\end{defn}
 For example CW-complexes are well-pointed, but the Cantor set is not.  A cover of a well-pointed space is also well-pointed.
 
 \bigskip
 
 Subtle topological issues arise  for products of spaces, when neither is locally compact. For example, if $X$ and $Y$ are infinite-dimensional CW-complexes, then the product set $X \times Y$ with the usual product topology may not be a CW-complex. Topologists prefer to work with ``compactly generated'' spaces, which include CW-complexes. 
 
 \begin{defn} A Hausdorff space $X$ is \emph{compactly generated} (CG), provided a subset of $X$ is closed iff its intersection with every compact subset of $X$ is closed.
 \end{defn}
 
Unfortunately, when $X$ and $Y$ are CG spaces, then $X \times Y$ with the  product topology may not be CG. 
 There is a standard refinement of the product topology, sometimes called the \emph{$k$-topology} on $X \times Y$, which remedies this, in which the closed sets are those whose intersection with every compact set is closed, when considered in the product topology. We will write this as ``$X \times_k Y$''. When either $X$ or $Y$ is locally compact, then the topologies coincide, i.e., $X \times_k Y=X \times Y$. Please see \cite[Chapter VII, Section 8]{Maclane} for a textbook account.
 
 Let us draw out this distinction for groups with a topology. (See \cite{Lamartin}.)  
 
 \begin{defn} Let $G$ be a group with a CG topology, and continuous inversion. Write $\mu: G \times G \to G$ for the multiplication map. When $\mu$ is continuous for the $k$-topology on $G \times G$, we say $G$ is a \emph{$k$-group} (Kelley group). When $\mu$ is continuous for the usual product topology on $G \times G$ we say $G$ is a \emph{$t$-group} (CG topological group). 
 \end{defn}

Every $t$-group is a $k$-group, but not  every  $k$-group is a $t$-group. The continuity condition for a \emph{right $G$-action} of a $k$-group $G$ on a space $X$ is that it defines a continuous map $X \times_k G \to X$.  
 To define a $G$-bundle, one must take $k$-products in Section \ref{g.bundle.defs}.

  \begin{lemma}
A cover of a regular CG-space $B$ is again a regular CG-space.
 \end{lemma}
 
 \begin{proof}
 Let $p: X \to B$ be the cover.  One can show that for all $x_0 \in X$, there is an open neighborhood $V$ of $x_0$ so that  $p$ is trivial over $p(\ol V)$, and the restriction fo $p$ to $\ol V$ is a closed embedding.
   Let $U$ be a $k$-open subset in $X$, and $x_0 \in U$. Taking $V$ as above, it follows  that $p(U \cap V)$ is $k$-open in $B$ and hence open.  From this, $U \cap V$ is open in $X$ and ranging over all $x_0$ we deduce that $U$ is itself open. 
  This shows that $X$ is a CG-space, and regularity is straightforward.
 \end{proof}
 
 It follows that a central extension of a well-pointed $t$-group is again a well-pointed $t$-group. (Recall every topological group is regular.)
 
 \section{Milgram-Steenrod's Classifying Space} \label{Milgram.Steenrod.sec}
  Now we review the Milgram-Steenrod  $BG$-functor and its properties.  Applying this functor to $\ms E \in {\bf E}(G,A)$ gives a $BA$-bundle over $BG$ which we will use heavily. We also exhibit a bijection from $\mb E(G,A)$ to $\HH^2(BG,A)$ when $G$ is connected, by using the connecting map of the LES of homotopy groups. Finally, we study the structure of the $K(A,2)$-space $B(BA)$.
  
  \subsection{$EG$ as a $G$-space}
 
  Given a well-pointed $k$-group $G$, Steenrod in \cite{Steenrod} recursively constructs a sequence
 \beq
 D_0 \subseteq E_0 \subseteq D_1 \subseteq E_1 \subseteq \cdots
 \eeq
 of spaces, so that each $E_i$ has a free right $G$-action, and each $D_n$ comes with a specific contraction.
 
 $D_0=\{e\}$, $E_0=G$, and
 $D_1$ is the reduced cone $G \wedge I$, where $I$ is the unit interval $[0,1]$. Here the \emph{reduced cone} is the topological quotient of $G \times I$ by the subspace
 \beq
 (G \times \{ 0\}) \cup ( \{1_G\} \times I).
 \eeq
 Generally, $D_{n+1}$ is the ``enlargement to $E_{n}$ of the contraction of $D_n$'', meaning it is defined as the pushout

 \beq
	\xymatrix{
		D_n \wedge I \ar[r] \ar[d]  &D_n \ar[d] \\
		E_n \wedge I \ar[r]   & D_{n+1}  \\
	}
	\eeq	
	
	and $E_{n+1}$ is the ``enlargement to $D_{n+1}$ of the $G$-action on $E_{n}$'', defined as the pushout
	 
 \beq
	\xymatrix{
		  E_n \times G \ar[r] \ar[d]  &E_n \ar[d] \\
		  D_{n+1} \times G \ar[r]   & E_{n+1}  \\
	}.
	\eeq 
	
 Then $EG$ is the union of the $E_n$'s, equivalently the union of the $D_n$'s. We give it the weak topology of the $D_n$'s. From being the union of the $E_n$'s, it inherits a free right $G$-action. From being the union of the $D_n$'s, it inherits a contraction.
 
 Finally, the quotient of $EG$ by $G$ defines a universal $G$-bundle $p=p_G:EG \to BG$. Write $B_nG \subset BG$ for the image $p(D_n)=p(E_n)$.

When $G$ is discrete, the spaces $EG$ and $BG$ are CW-complexes, with $E_nG$ and $B_nG$  the $n$-skeletons of $EG$ and $BG$, respectively.
 
\begin{remark} More generally, when $G$  is a CW-complex with a cellular multiplication, then $EG$ and $BG$ are also $CW$-complexes. 
The multiplication on $EG$ is again cellular, and similarly for $BG$ when $G$ is abelian. For example, this holds for  $G=\SO(n)$ \cite[Section 3.D]{hatcher}.
\end{remark} 
 
 \begin{example} When $G$ is trivial, $D_n=E_n$ is a point for all $n$. So, $EG=BG$ is a point.
 \end{example}
  
  \begin{example} When $G=C_2$,  
  $D_n$ is the $n$-disc, and $E_n=S^n$ with antipodal action of $G$. So, $EG=S^\infty$ and $BG=\R \mb P^\infty$,
  with $B_nG=\R \mb P^n$.
  \end{example} 
  
  \subsection{$EG$ as a group} \label{EG.gp}
  
Steenrod defines in \cite{Steenrod} a group structure on $EG$, making it a well-pointed $k$-group. We record some of its properties here.  
\begin{itemize}
\item $EG$ is generated as a group by $D_1$. 
\item The identity $G =E_0$ gives an injective homomorphism $i: G \to EG$.
\item The $G$-action on each $E_n$ agrees with right multiplication by $E_0$.
\end{itemize}
  
  Since $EG$ is generated by $D_1$, the set-map 
  \beq
  G \times I \twoheadrightarrow G \wedge I= D_1
  \eeq
  induces a surjection $F(G \times I) \twoheadrightarrow EG$,
  where $F(G \times I)$ is the free group on the set $G \times I$. From this surjection, we can identify the group $EG$   with the quotient of $F(G \times I)$ by the following relations
 \begin{itemize}
 \item $(g,0)=(1_G,t)=1$ for $g \in G$ and $t \in I$.
 \item $(g,t)(g',t)=(gg',t)$ for $g,g' \in G$ and $t \in I$.
 \item If $0<t'<t \leq 1$, then $(g,t)(g',t')=(gg'g^{-1},t')(g,t)$ for $g,g' \in G$.
 \end{itemize}
 For simplicity we will again write $(g,t) \in EG$, but put $[g,t]=p((g,t)) \in BG$. With this notation, the map $i: G \to EG$ is the homomorphism defined by $i(g)=(g,1)$. Then $BG$ is the quotient $EG/i(G)$. Let $e_{BG}=p(1_{EG}) \in BG$. When $G$ is abelian, then $EG$ is abelian, and so $BG$ is an abelian well-pointed $k$-group.

We may write
\beq
D_n=\{ (g_1,t_1) \cdots (g_n,t_n) \mid g_i \in G, t_i \in I\}.
\eeq
Write $[g_1,t_1] \cdots [g_n,t_n]:=p \left( (g_1,t_1) \cdots (g_n,t_n) \right)$, so that $B_nG=\{ [g_1,t_1] \cdots [g_n,t_n] \mid g_i \in G, t_i \in I\}$.  

Recall that for a pointed topological space $(X,x_0)$, one writes $\Sigma X$ for the the \emph{reduced suspension} of $X$, meaning the quotient of $X \times I$ by 
 \beq
 \left( X \times \{0\} \right) \cup \left( X \times \{1\} \right) \cup \left(
 \{ x_0\} \times I \right).
 \eeq
 For example, $B_1G \cong \Sigma G$.
 We will   use notation $(x,s_1,s_2) \in \Sigma^2 X$ with $x \in X$ and $s_1,s_2 \in I$, to mean the image under the evident surjection $X \times I^2 \to \Sigma^2 X$. Note that this surjection factors through $X \times I^2/\partial I^2 \to \Sigma^2 X$.
 
 Write $\Delta_n$ for the set of $n$-tuples $(t_1, \ldots, t_n)$ with $0 \leq t_1 \leq \cdots \leq t_n \leq 1$. Let $*$ be the basepoint of $ I^2/\partial I^2$. The map $\Delta_2 \to I^2/\partial I^2$ defined by
 \beq
 (t_1,t_2) \mapsto \begin{cases} 
 & (t_1/t_2,t_2) \text{ when $t_2 \neq 0$}\\
 & * \text{ when $t_2=0$}\\
 \end{cases}
 \eeq
 descends to a homeomorphism $\eta_2:\Delta_2/\partial \Delta_2 \overset{\sim}{\to} I^2/\partial I^2$. More generally, from the formula $(t_1, t_2, \ldots, t_k) \mapsto (t_1/t_2,t_2/t_3, \ldots, t_k)$ one obtains a homeomorphism
 $\eta_k:\Delta_k/\partial \Delta_k \overset{\sim}{\to} I^k/\partial I^k$, with inverse $\bar \eta_k$ given by
 \beq
 \tilde \eta_k(x_1, \ldots, x_k)=(x_1x_2 \ldots x_k,\ldots, x_{k-1}x_k,x_k).
 \eeq
 Steenrod defines quotient maps $k_n: G^n \times \Delta_n \to D_n$. The map $q: G^2 \times \Delta_2 \to \Sigma^2(G \wedge G)$ given by
 \beq
 (g_1,g_2,t_1,t_2) \mapsto \left[ \left(g_1,g_2,\eta_2(t_1,t_2) \right) \right]
 \eeq
 factors through $p \circ k_2$, and induces a homeomorphism
 \begin{equation} \label{suspend.disbelief}
 q: B_2G/B_1G \overset{\sim}{\to} \Sigma^2(G \wedge G),
 \end{equation}
 with $[g_1,s_1][g_2,s_2] \mapsto ((g_1,g_2),s_1,s_2)$.

Given a continuous group homomorphism $\varphi: G \to G'$, the map $G \times I \to G' \times I$ defined by $(g,t) \mapsto (\varphi(g),t)$
induces a continuous group homomorphism $E\varphi: EG \to EG'$
with the property that $E\varphi(x \cdot g)=E\varphi(x) \cdot \varphi(g)$. Hence it descends to a continuous map $B\varphi: BG \to BG'$.

  \begin{prop}  \label{fibration.normal}  \cite[Theorem 7.7]{Piccinini} Let  $G$ be a well-pointed $t$-group. If $\mathscr E=(E,i,p) \in  {\bf E}(G,A)$, then the induced map $Bp: BE \to BG$ is a $BA$-bundle. 
  \end{prop}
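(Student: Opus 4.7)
The goal is to equip $Bp: BE \to BG$ with the structure of a principal $BA$-bundle. The plan is to promote the central $A$-action on $E$ to a $BA$-action on $BE$, identify the resulting orbit map with $Bp$, and then establish local triviality.

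First, since $A$ lies in the center of $E$, the multiplication map $\mu: A \times E \to E$ is a continuous group homomorphism, not merely a group action. Applying Steenrod's $E(-)$ functor, $E\mu: E(A \times E) \to EE$ is a continuous homomorphism of CW-groups; using compatibility of $E(-)$ with binary products in the CG category (and the identification $E(A \times E) \cong EA \times_k EE$), one obtains a continuous right action $EA \times_k EE \to EE$. Descending through the quotients $EE \twoheadrightarrow BE$ and $EA \twoheadrightarrow BA$ yields a continuous map $BA \times_k BE \to BE$, which I would verify is a right $BA$-action. A stagewise check along the Steenrod filtration shows the action is free, since $A$ acts freely on $E$ by translation and the pushout recursion preserves this freeness.

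Second, I would identify the orbits with the fibers of $Bp$. At each filtration stage, the pushout recursion commutes with the quotient $E \to E/A = G$, producing a canonical identification $E_n(E)/E_n(A) \cong E_n(G)$ and hence a continuous bijection $BE/BA \to BG$. Openness of $p: E \to G$ (which holds by the definition of a central extension) propagates through the Steenrod recursion to give openness of $Bp$, so this continuous bijection is in fact a homeomorphism, and $Bp$ is the orbit map for the $BA$-action.

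The main obstacle is local triviality of $Bp$. Since $A$ is discrete and central, $p: E \to G$ is genuinely an $A$-cover, so there is an open cover $\{V_\alpha\}$ of $G$ with trivializations $p^{-1}(V_\alpha) \cong V_\alpha \times A$. These trivializations must be propagated inductively through the pushouts defining $D_n$ and $E_n$, producing at each truncation level a compatible local trivialization of $B_n E \to B_n G$ with fiber $B_n A$, and then one passes to the colimit to obtain local $BA$-trivializations of $Bp$ over an open cover of $BG$. This pushout-compatible bookkeeping is precisely the content of \cite[Theorem 7.7]{Piccinini}, which I would invoke for this final step rather than reproduce the full verification.
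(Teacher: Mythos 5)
Your proposal is correct and ultimately rests on the same source as the paper: the paper offers no argument beyond citing \cite[Theorem 7.7]{Piccinini}, accompanied by a remark verifying that theorem's hypothesis that $E$ is itself a CW-group (since an $A$-cover of a CW-complex is a CW-complex), and your outline likewise defers the decisive local-triviality step to that same theorem. The only point you omit that the paper makes explicit is this check that $E$ is a CW-group, which is what licenses the citation.
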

  
\begin{remark} The group $E$ is a well-pointed $t$-group, since it covers the well-pointed $t$-group $G$.    So via \cite[Lemma 7.4]{Piccinini},  the hypothesis of \cite[Theorem 7.7]{Piccinini} is satisfied. 
\end{remark}
 
 \subsection{Iterating $B$} \label{B^2}
 
 Since $A$ is abelian, so is $EA$. Hence the coset space $BA=EA/A$ is also a group, generated by quotienting $A \times I$ by the further relation $(a,1) \equiv 0_{BA}$ for $a \in A$. Applying the functor $B$ again, we see that $BBA$ is generated by the elements $[[a,t],t']$ for $a \in A$ and $t,t' \in I$. For simplicity we drop the inner brackets, writing $[[a,t],t']$ as $[a;t,t']$, so that $BBA$ is the abelian group generated by $A \times I^2$ with the following relations for $a,a_i \in A$ and $t,t' \in I$:
 \beq
 (a_1;t,t')+(a_2;t,t') \equiv (a_1+a_2;t,t'),
 \eeq
 and $(a;t,t') \equiv 0$ iff one of the following holds:
 \begin{itemize}
 \item $a=0_A$
 \item $t=0$ or $t'=0$
 \item $t'=1$ or $t'=1$.
 \end{itemize}
 Thus $[a;t_1,t_2]$ is the image of $(a;t,t')$ in the quotient $B^2A$.
 This generalizes in the obvious way for $B^nA$.
 
 \begin{remark} Write $\C(x)$ for the field of rational functions with complex coefficients, and $\mb P(\C(x))$ for the multiplicative quotient group $\C(x)^{\times}/\C^\times$. Let $\varphi$ be a homeomorphism from $I^2/\partial I^2$ to the Riemann sphere, taking the boundary  to the infinite point.
 
The following prescription describes a group isomorphism from $B^2\Z$ to  $\mb P(\C(x))$. Take a generator $[n;t,t']$, with $(t,t') \notin \partial I^2$, to the power $(z-\varphi(t,t'))^n$, and when  $(t,t') \in \partial I^2$, map $[n;t,t']$ to the identity. \end{remark}
 
 \begin{lemma} \label{gamma.a} Given $a \in A$, the map $I^n \to B^nA$ given by $(t_1,\ldots, t_n) \mapsto [a; t_1, \ldots, t_n]$ descends to $\gm^n_a: I^n/\partial I^n \to B^nA$.
 The correspondence $a \mapsto \gm^n_a$ gives an isomorphism $A \overset{\sim}{\to} \pi_n(B^nA)$.
 \end{lemma}
 
 \begin{proof} For each $n$ one has a $B^{n-1}A$-bundle $E(B^{n-1}A)\to B^nA$, whence connecting isomorphisms $\pi_n(B^nA) \overset{\partial}{\to} \pi_{n-1}(B^{n-1}A)$. Now $\gm^n_a$ lifts to $\tilde \gm^n_a: I^n \to E(B^{n-1}A)$ given by $\tilde \gm^n_a(t_1, \ldots, t_n)=([a;t_1, \ldots, t_{n-1}],t_n)$. 
 The restriction of  $\tilde \gm^n_a$ to all but one of the faces of $I^n$ is the constant map to $1$, but the restriction of $\tilde \gm^n_a$ to the face of $I^n$ with $t_n=1$ of $I^n$ can be identified with $\gm^{n-1}_a$. Hence $\partial(\gm^n_a)=\gm^{n-1}_a$.  Composing them gives an isomorphism
 \beq
 \pi_n(B^nA) \overset{\partial}{\to} \pi_{n-1}(B^{n-1}A)  \overset{\partial}{\to} \cdots  \overset{\partial}{\to} \pi_1(BA) \overset{\partial}{\to} A
 \eeq
 sending $\gm^n_a$ to $a \in A$.
 \end{proof}

 \subsection{From Extensions to Bundles} \label{rain.car}
 Let $G$ be a well-pointed $t$-group, and  $\ms E=(E,i,p) \in  {\bf E}(G,A)$. As noted above (Proposition \ref{fibration.normal}), $Bp: BE \to BG$ is a $BA$-bundle, and we write $\ms B \ms E=(BE,Bp)$. 
   It is easy to see that an isomorphism $\ms E_1 \cong \ms E_2$ induces a $BA$-bundle isomorphism from $\ms B \ms E_1$ to $\ms B \ms E_2$.
  
So we have a functor $ \ms B: {\bf E}(G,A) \to  \Bun_{BG}(BA)$ which induces a well-defined map $\ms B: \mb E(G,A) \to \bun_{BG}(BA)$.
Suppose $G,G'$ are well-pointed $t$-groups, $\ms E \in {\bf E}(G,A)$ and    $\ms E' \in {\bf E}(G',A)$, and  $f: E \to E'$ is a homomorphism with $f \circ i=i' $. Then one has a commutative diagram
 \beq 
	\xymatrix{
		A \ar[d]^{\id} \ar[r]^i & E  \ar[d]^{f}\ar[r]^p & G \ar[d] \\
		A \ar[r]^{i'}  & E' \ar[r]^{p'} & G'.}
	\eeq 
    
   Applying $B$ to the group action diagram 
   
    \beq 
	\xymatrix{
		E \times A \ar[r] \ar[d]_{f \times \id_A} & E  \ar[d]^{f}\\
		E' \times A \ar[r]^{ } & E',}
	\eeq  
 shows that $Bf$ is a $BA$-equivariant morphism from $BE$ to $BE'$.

\begin{prop} \label{Bp.funct}  If $\varphi: G' \to G$ is a homomorphism, and  $\ms E \in {\bf E}(G,A)$ then 
\beq
\ms B(\varphi^* \ms E) \cong (B \varphi)^* (\ms B \ms E).
\eeq
\end{prop}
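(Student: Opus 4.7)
The plan is to identify both $\ms B(\varphi^*\ms E)$ and $(B\varphi)^*(\ms B\ms E)$ as $BA$-bundles over $BG'$ fitting into the same commutative square, and then invoke Lemma \ref{bou.pullback} applied to the image under $B$ of the defining pullback square for $\varphi^*\ms E$. Write $\ms E = (E, i, p)$, so that $\varphi^*\ms E = (E', i', p')$ with $E' = \{(g', e) \in G' \times E : \varphi(g') = p(e)\}$, $i'(a) = (1_{G'}, i(a))$, and $p'(g', e) = g'$. Let $\pi_E: E' \to E$ denote the projection on the second factor. Then $\pi_E$ is a continuous group homomorphism satisfying $\pi_E \circ i' = i$, and the square
$$
\xymatrix{
E' \ar[r]^{\pi_E} \ar[d]_{p'} & E \ar[d]^{p} \\
G' \ar[r]^{\varphi} & G
}
$$
commutes, by the very defining condition on $E'$.

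Applying the functor $B$ to this square, I obtain the commutative diagram
$$
\xymatrix{
BE' \ar[r]^{B\pi_E} \ar[d]_{Bp'} & BE \ar[d]^{Bp} \\
BG' \ar[r]^{B\varphi} & BG.
}
$$
By Proposition \ref{fibration.normal}, both $Bp': BE' \to BG'$ and $Bp: BE \to BG$ are $BA$-bundles, representing $\ms B(\varphi^*\ms E)$ and $\ms B(\ms E)$ respectively. Since $\pi_E \circ i' = i$, the argument recorded in Section \ref{rain.car} applies verbatim: applying $B$ to the equivariance square relating the $A$-action on $E'$ to the $A$-action on $E$ via $\pi_E$ produces the corresponding compatibility for the $BA$-actions on $BE'$ and $BE$, so $B\pi_E$ is $BA$-equivariant. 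With the hypotheses of Lemma \ref{bou.pullback} in place, I conclude that the displayed square is a pullback diagram of $BA$-bundles over $BG'$, which is exactly the assertion $\ms B(\varphi^*\ms E) \cong (B\varphi)^*(\ms B\ms E)$.

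The only step requiring any real care is the verification that $B\pi_E$ is $BA$-equivariant, and even this is essentially bookkeeping: it is a direct instance of the general principle, spelled out in Section \ref{rain.car}, that the $B$-functor carries the action of $i(A)$ on $E$ to the action of $BA$ on $BE$. Everything else is formal invocation of functoriality, Proposition \ref{fibration.normal}, and Lemma \ref{bou.pullback}.
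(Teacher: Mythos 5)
Your proposal is correct and follows essentially the same route as the paper: apply the $\ms B$-functor to the defining pullback square for $\varphi^*\ms E$ and conclude via Lemma \ref{bou.pullback}. You simply make explicit the verifications (that both vertical maps are $BA$-bundles via Proposition \ref{fibration.normal} and that $B\pi_E$ is $BA$-equivariant) that the paper leaves implicit.
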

 
 \begin{proof}
Applying  $\ms B$  to the pullback square for $\varphi^*E$ gives the commutative diagram 
 \beq 
	\xymatrix{
		B (\varphi^*E) \ar[r] \ar[d] & BE  \ar[d]^{Bp} \\
		BG' \ar[r]^{B \varphi} & BG}.
	\eeq  

We obtain the desired isomorphism by Lemma \ref{bou.pullback}.	

  \end{proof}
  
  \begin{prop} \label{Baer.2.Baer} The $\ms B$-functor takes Baer sums to Baer sums.
  \end{prop}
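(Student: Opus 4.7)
The plan is to apply the $\ms B$-functor piecewise to the two-step Baer sum recipe and match it with the Baer sum on the bundle side. Write $\ms E_k=(E_k,i_k,p_k)$ for $k=1,2$; recall that $\ms E_3=\ms E_1+\ms E_2$ is built by (a) forming the pullback $E_3'=E_1\times_G E_2$ and (b) quotienting by the antidiagonal $\triangledown(A)\subset E_3'$. The bundle-level Baer sum $X_3=\ms B\ms E_1+\ms B\ms E_2$ has exactly the same two-step shape applied to $X_k=\ms B\ms E_k$.

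For (a), I would view $E_3'\to E_1$ as the pullback central extension $p_1^*\ms E_2\in {\bf E}(E_1,A)$. Proposition~\ref{Bp.funct} then gives a $BA$-bundle isomorphism
\[
BE_3' \;=\; \ms B(p_1^*\ms E_2) \;\cong\; (Bp_1)^*(\ms B\ms E_2) \;=\; BE_1\times_{BG}BE_2 \;=\; X_3',
\]
commuting with projection to $BE_1$, and hence to $BG$. For (b), the quotient $E_3=E_3'/\triangledown(A)$ fits into the central extension $0\to A\xrightarrow{\triangledown}E_3'\to E_3\to 1$, so Proposition~\ref{fibration.normal} turns this into a $BA$-bundle $BE_3'\to BE_3$, exhibiting $BE_3$ as the quotient of $BE_3'$ by the free right $BA$-action given by translation via $B\triangledown\colon BA\to BE_3'$.

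To finish, I would identify this $BA$-action with the antidiagonal action on $X_3'$. The inclusion $\triangledown$ factors as $A\xrightarrow{a\mapsto(a,a^{-1})}A\times A\xrightarrow{(i_1,i_2)}E_3'$. Applying $B$ and using that the Milgram--Steenrod construction is compatible with finite direct products of CW-groups so that $B(A\times A)\cong BA\times BA$, this becomes $BA\xrightarrow{\omega\mapsto(\omega,\omega^{-1})}BA\times BA\xrightarrow{(Bi_1,Bi_2)}BE_3'$. Under the identification above, $(Bi_1,Bi_2)$ realises precisely the $BA\times BA$-structure on $X_3'$ coming from the bundles $BE_1$ and $BE_2$, so right translation by $B\triangledown(\omega)$ corresponds to $(x_1,x_2)\mapsto(x_1\cdot\omega,\,x_2\cdot\omega^{-1})$ on $X_3'$. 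Combined with the previous step, this yields $BE_3\cong X_3'/(\text{antidiagonal }BA)=X_3$; the residual $BA$-structure and the projection to $BG$ match by tracing the first-coordinate embedding $A\to A\times A\to E_3'$ defining $i_3$. The main obstacle is this last identification: one must verify that the Milgram--Steenrod $B$ commutes with finite direct products of CW-groups, i.e.\ that $E(A\times A)$ is naturally $EA\times_k EA$ as $(A\times A)$-spaces, which is where the $\times_k$ subtleties of Section~\ref{CG.section} must actually be checked. Everything else is either a direct appeal to Propositions~\ref{Bp.funct} and~\ref{fibration.normal} or formal diagram chasing.
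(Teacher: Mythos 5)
Your proof is correct and is essentially the paper's own argument: the paper likewise identifies $BE_3'$ with $BE_1\times_{BG}BE_2$ via $B\pr_1\times B\pr_2$ (your step (a) is Proposition \ref{Bp.funct} applied to $p_1^*\ms E_2$; the paper uses the projection to $E_2$ instead of $E_1$) and then descends through the antidiagonal/$B\triangledown$ quotient exactly as in your step (b). The product-compatibility $B(A\times A)\cong BA\times BA$ that you flag as the remaining obstacle is indeed the crux of matching the two $BA$-actions, and it is relied upon implicitly (without comment) in the paper's own equivariance checks as well.
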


  \begin{proof} Let $\ms E_1,\ms E_2 \in{\bf E}(G,A)$ as in Section \ref{Baer.groups}. Note that the projection $E_3' \to E_2$ gives an extension  $\ms E_3'=(E_3,i_3',p_3') \in {\bf E}(E_2,A)$, and therefore $BE_3' \to BE_2$ is a $BA$-bundle.
  Both $BE_3'$ and $BE_1 \times_{BG} BE_2$ have two actions of $BA$. 
  The first action of $BA$ on $BE_3'$ comes from $Bi_3'$, and the first action on the fibre product comes from the $BA$-action on $BE_1$. The second action on $BE_3'$ comes from $B\triangledown$, and the second action on the fibre product comes from the antidiagonal action of $BA$ on the product.
   
Consider the function
   \beq
 f=B\pr_1 \times B\pr_2: BE_3' \to BE_1 \times_{BG} BE_2.
 \eeq
 One checks that $f$ is $BA$-equivariant with respect to both actions. 
 From the first equivariance, it induces an isomorphism of $BA$-bundles over $BE_2$. 
 From the second equivariance, we see that $f$ descends to an isomorphism from $\ms B \ms E_3$ to the Baer sum
 $\ms B \ms E_1 + \ms B \ms E_2$.
  \end{proof}

 \begin{prop} $\ms B$ is natural in $A$.
 \end{prop}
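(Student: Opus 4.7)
The plan is to derive naturality of $\ms B$ in $A$ directly from the universal property recorded just after Definition \ref{induced.bundle}. What must be produced, for every homomorphism $\psi: A \to A'$ of discrete abelian groups and every $\ms E=(E,i,p) \in {\bf E}(G,A)$, is a canonical isomorphism
\beq
(B\psi)_*(\ms B \ms E) \cong \ms B(\psi_* \ms E)
\eeq
of $BA'$-bundles over $BG$, natural in $\ms E$ and functorial in $\psi$.

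First I would apply the $B$-functor to the commutative diagram \eqref{diag.ek}, obtaining the commutative diagram
\beq
\xymatrix{
BA \ar[rr]^{B\psi} \ar[d]^{Bi} && BA' \ar[d]^{Bi'} \\
BE \ar[rr]^{B\psi_E} \ar[rd]_{Bp} && BE' \ar[ld]^{Bp'} \\
& BG &
}
\eeq
of CW-groups. Since $\psi_E$ is a continuous group homomorphism with $\psi_E \circ i = i' \circ \psi$, functoriality yields that $B\psi_E$ is a group homomorphism over $BG$ satisfying $B\psi_E \circ Bi = Bi' \circ B\psi$. Under the identification of the $BA$-action on $BE$ (that makes $Bp$ a $BA$-bundle, per Proposition \ref{fibration.normal}) with right translation via $Bi$, this last identity translates into the $B\psi$-equivariance statement of \eqref{universal4}, namely $B\psi_E(x \cdot Bi(a)) = B\psi_E(x) \cdot Bi'(B\psi(a))$ for $x \in BE$, $a \in BA$.

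The universal property of the induced bundle then hands me a unique isomorphism of $BA'$-bundles $\theta: (BE)[BA'] \overset{\sim}{\to} BE'$ over $BG$ through which $B\psi_E$ factors, which is exactly the desired isomorphism $(B\psi)_*(\ms B \ms E) \cong \ms B(\psi_* \ms E)$. Naturality in $\ms E$ and compatibility with composition of $\psi$'s then fall out of the uniqueness clause in the universal property together with the functoriality of $B$.

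The main obstacle, really an input to the argument rather than a true hurdle, is the identification of the $BA$-bundle action on $BE$ from Proposition \ref{fibration.normal} with right translation by $Bi(BA)$; once this is read off from the Milgram-Steenrod construction, the rest is a one-line application of the universal property of induced bundles.
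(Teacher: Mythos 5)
Your proposal is correct and follows essentially the same route as the paper: apply $B$ to Diagram \eqref{diag.ek} to obtain a map $BE \to BE'$ over $BG$ satisfying the equivariance condition \eqref{universal4} with respect to $B\psi$, and then invoke the universal property of the induced bundle from Section \ref{g.bundle.defs} to produce the isomorphism $BE[BA'] \overset{\sim}{\to} BE'$. The identification of the $BA$-action on $BE$ with translation via $Bi$, which you flag as the needed input, is exactly what the paper establishes in Section \ref{rain.car} by applying $B$ to the group-action diagram.
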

 
 \begin{proof} Let $\psi: A \to A'$, and $\ms E \in {\bf E}(G,A)$. 
 Let $E'$ be as in \eqref{e.prime}. From Diagram \eqref{diag.ek}, we obtain a map $f': BE \to BE'$ over $BG$, satisfying $f'(x \cdot a)=f'(x) \cdot B \psi(a)$
 for all $x \in BE$ and $a \in BA$.  Thus as in Section \ref{g.bundle.defs}
 we deduce an isomorphism
 \beq
 \theta: BE[BA'] \overset{\sim}{\to} BE'
 \eeq
 of $BA'$-bundles over $BG$.
 \end{proof}

\section{Homotopy Theory of Classifying Spaces} \label{long.ex.seq.section}
 In this section let $G$ be a nice well-pointed $t$-group, for example a Lie group.
\subsection{The Long Exact Sequence}
Applying Theorem \ref{homotopy.fibration} to the   $G$-bundle $EG \to BG$ gives a  long exact sequence
\beq
\cdots \to \pi_n(G)  \to  \pi_n(EG) \to  \pi_n(BG)  \to  \pi_{n-1}(G)  \to  \cdots,
\eeq
which is natural in $G$. Since $EG$ is contractible, this gives isomorphisms 
\begin{equation} \label{bottle}
\partial_n=\partial_n^G: \pi_n(BG) \overset{\sim}{\to} \pi_{n-1}(G),
\end{equation} 
for $n \geq 1$.   The set of path components $\pi_0(G,1)=G/G^\circ$ has a group structure, and 
\begin{equation}\label{pi1bg}
\partial_1: \pi_1(BG) \to \pi_0(G,1)
\end{equation}
is an isomorphism. By Hurewicz's theorem, this gives rise to natural isomorphisms
 \begin{equation} \label{stapler} 
 \partial_1: \HH_1(BG) \overset{\sim}{\to}  (G^\circ \bks G)^{\ab},
  \end{equation}
 and
\beq
\partial_1^*: \Hom(\HH_1(BG),A) \to \Hom_c(G,A).
\eeq
 Since $A$ is discrete, 
\beq
\Hom_c(G,A)=\Hom((G^\circ \bks G)^{\ab},A).
\eeq   
 By the UCT, we have
\beq
0 \to \Ext^1(\HH_0(BG),A) \to \HH^1(BG,A) \overset{q^1}{\to} \Hom(\HH_1(BG),A) \to 0.
\eeq
Since $H_0(BG)$ is free, the $\Ext$-term vanishes, and so $q^1$ gives an isomorphism from $H^1(BG,A)$ to $\Hom(\HH_1(BG),A)$. 
Combining this with   \eqref{stapler} gives:

\begin{prop} \label{m^1}
The composition 
\beq
 m^1= \partial_1^* \circ q^1:\HH^1(BG,A) \to \Hom_c(G,A)
 \eeq
  is a natural isomorphism.
\end{prop}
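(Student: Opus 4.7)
The plan is to observe that bijectivity has already been essentially assembled in the paragraphs preceding the statement, and then to address naturality in $G$. For bijectivity, I would just invoke the preceding discussion: $q^1$ is an isomorphism from the UCT sequence together with the freeness of $H_0(BG)$, while $\partial_1^*$ is an isomorphism because $\partial_1 : H_1(BG) \to (G^\circ \bks G)^{\ab}$ is an isomorphism (combining \eqref{pi1bg} with Hurewicz) and because $A$ is discrete, so $\Hom_c(G,A) = \Hom((G^\circ \bks G)^{\ab},A)$. The composition of isomorphisms is an isomorphism.

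The real content of the proposition is naturality in $G$. Given a homomorphism $\varphi: G' \to G$ of CW-groups, I would verify commutativity of the square with horizontal maps $m^1$ and vertical maps $(B\varphi)^*$ on the left and $\varphi^*$ on the right, by checking naturality of each of the three ingredients separately. Naturality of $q^1$ is the standard naturality of the UCT splitting under the induced map $(B\varphi)_*: H_*(BG') \to H_*(BG)$. Naturality of the Hurewicz homomorphism in the underlying space is classical. Finally, for naturality of $\partial_1$ itself, I would appeal to the functoriality of the Milgram-Steenrod construction: $\varphi$ delivers a morphism of fibrations $EG' \to EG$ over $B\varphi$ (via $E\varphi$), and the LES of homotopy groups is natural for morphisms of fibrations, producing a commutative ladder that contains the desired square for $\partial_1$.

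The main obstacle, such as it is, is purely bookkeeping: one must check that the identification $\Hom_c(G,A) = \Hom((G^\circ \bks G)^{\ab},A)$ is itself functorial in $G$. This holds because any continuous $G \to A$ has open kernel (as $A$ is discrete), hence kills the connected component $G^\circ$, and then factors through the abelianization of $\pi_0(G)$; pullback along $\varphi$ respects both steps since $\varphi(G'^\circ) \subseteq G^\circ$ and abelianization is a functor. Threading this through the three naturality squares yields the required commutativity, completing the argument.
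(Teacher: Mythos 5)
Your proposal is correct and matches the paper's treatment: the paper gives no separate proof, obtaining the proposition directly from the preceding discussion ($q^1$ an isomorphism via the UCT and freeness of $H_0(BG)$, $\partial_1^*$ an isomorphism via \eqref{stapler} and the discreteness of $A$), with naturality left implicit as a composition of natural maps. You simply spell out the naturality checks in more detail than the paper does, which is harmless.
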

     
The Hurewicz map $h_2: \pi_2(BG) \to \HH_2(BG)$ dualizes to a homomorphism
\beq
h^2=h_2^*: \Hom(\HH_2(BG),A) \to \Hom(\pi_2(BG),A).
\eeq
Moreover the Universal Coefficient Theorem gives an exact sequence
\begin{equation} \label{UCT}
0 \to \Ext^1(\HH_1(BG), A) \to \HH^2(BG,A) \overset{q^2}{\to} \Hom_{\Z}(\HH_2(BG),A) \to 0.
\end{equation}
 
The map $m^2: \HH^2(BG,A) \to \Hom(\pi_1(G),A)$ defined by $m^2=(\partial_2^*)^{-1} \circ h^2 \circ q^2$ is the composition of natural maps and hence is natural itself.
 
 \subsection{Case of $G$ connected}
 \begin{prop} \label{Gconnect.m2} When $G$ is connected, $m^2$ is a bijection.
 \end{prop}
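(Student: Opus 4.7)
The plan is to verify that each of the three maps in the definition
\[
m^2 = (\partial_2^*)^{-1} \circ h^2 \circ q^2
\]
is an isomorphism under the hypothesis that $G$ is connected. The key observation is that connectedness of $G$ forces $BG$ to be simply connected, which simultaneously activates the Hurewicz theorem in degree $2$ and kills the Ext term in the Universal Coefficient Theorem.

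First, I would observe that since $\pi_0(G) = 1$ and $\partial_1: \pi_1(BG) \overset{\sim}{\to} \pi_0(G)$ from \eqref{pi1bg} is an isomorphism, the space $BG$ is simply connected. As $BG$ is a CW-complex, the Hurewicz theorem applies: $H_1(BG) = 0$ and the Hurewicz map $h_2: \pi_2(BG) \to H_2(BG)$ is an isomorphism. Dualizing, $h^2 = h_2^*$ is an isomorphism.

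Next, substituting $H_1(BG) = 0$ into the UCT exact sequence \eqref{UCT} kills the $\Ext^1$ term, so $q^2$ is also an isomorphism. Finally, $\partial_2: \pi_2(BG) \to \pi_1(G)$ is an isomorphism by \eqref{bottle}, so $\partial_2^*$ is an isomorphism on the dual side as well. Composing the three isomorphisms yields that $m^2$ is a bijection.

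There is really no obstacle here: once one recognizes that $G$ connected implies $BG$ simply connected, the result is an immediate unpacking of the UCT together with the classical Hurewicz isomorphism in degree $2$.
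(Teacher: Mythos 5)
Your proof is correct and follows essentially the same route as the paper: use $\pi_1(BG)=0$ (via the connecting isomorphism to $\pi_0(G)$) to get $H_1(BG)=0$, hence the Hurewicz isomorphism in degree $2$ and the vanishing of the $\Ext$ term in the UCT, and conclude that $m^2$ is a composite of isomorphisms. Your write-up is just slightly more explicit about $\partial_2^*$ also being an isomorphism, which the paper leaves implicit.
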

 
 \begin{proof} 
 By \eqref{bottle}, the group $\pi_1(BG)$ is trivial, and so necessarily $\HH_1(BG)$ is trivial. It then follows that  $h_2$ is an isomorphism. The $\Ext$-term in \eqref{UCT} vanishes, and so $m^2$ is the composition of two bijections.
   \end{proof}
   
 Combining $m^2$ with the bijection of Theorem \ref{cmi.thurs} gives:
  
 \begin{thm}\label{Gconnect.equiv} Let $G$ be a nice well-pointed connected $t$-group. The above maps give  an isomorphism
\beq
\HH^2(BG,A) \overset{\sim}{\to} \Hom(\pi_1(G),A) \overset{\sim}{\to} \mb E(G,A).
\eeq
\end{thm}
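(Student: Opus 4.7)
The plan is to simply assemble the two bijections that have just been established. By Proposition \ref{Gconnect.m2}, the natural map
\[
m^2 : H^2(BG,A) \longrightarrow \Hom(\pi_1(G),A)
\]
is a bijection whenever $G$ is connected: the connecting isomorphism \eqref{bottle} forces $\pi_1(BG)=0$, hence $H_1(BG)=0$ by Hurewicz applied in degree one, so the $\Ext^1$ term in the UCT \eqref{UCT} vanishes and the Hurewicz map $h_2$ is an isomorphism; $m^2$ is then a composition of three group isomorphisms. Next, by Theorem \ref{cmi.thurs}, the monodromy construction
\[
\partial : \mb E(G,A) \longrightarrow \Hom(\pi_1(G),A), \qquad \ms E \mapsto \partial_{\ms E},
\]
is a bijection for any path-connected CW-group $G$, which applies here. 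Thus $\partial^{-1} \circ m^2$ furnishes the desired bijection $H^2(BG,A) \overset{\sim}{\to} \mb E(G,A)$, factoring through $\Hom(\pi_1(G),A)$ as stated.

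To upgrade this bijection to an isomorphism of abelian groups, the only thing that remains to verify is that $\partial$ carries the Baer sum on $\mb E(G,A)$ to pointwise addition in $\Hom(\pi_1(G),A)$; the map $m^2$ is already a homomorphism since each of its three constituents ($q^2$ from the UCT, the dual Hurewicz $h^2$, and $(\partial_2^*)^{-1}$) is a homomorphism. For the Baer sum, given $\ms E_1,\ms E_2 \in {\bf E}(G,A)$ and a loop $\gamma$ in $G$ based at $1_G$, pick the lifts $\tilde\gamma_1 \subseteq E_1$ and $\tilde\gamma_2 \subseteq E_2$ starting at the identity. The pair $(\tilde\gamma_1,\tilde\gamma_2)$ is a lift of $\gamma$ to the fibre product $E_3'$ and descends to a lift in the Baer sum $E_3$; inspecting the endpoint under the antidiagonal quotient shows that the monodromy picks up $\partial_{\ms E_1}(\gamma) + \partial_{\ms E_2}(\gamma)$, giving $\partial_{\ms E_1 + \ms E_2} = \partial_{\ms E_1} + \partial_{\ms E_2}$.

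I expect this last compatibility to be the only nontrivial point, since everything else is simply reference to earlier results. The argument is essentially bookkeeping about how paths lift through a pullback followed by the quotient by the antidiagonal $A$-action, and it can be carried out in a few lines using the explicit model for $E_3$ from Section \ref{Baer.groups}. The more conceptual alternative would be to invoke Proposition \ref{Baer.2.Baer}, which records that the $\ms B$-functor converts Baer sums of extensions into Baer sums of $BA$-bundles, but here the direct path-lifting argument is the shortest route.
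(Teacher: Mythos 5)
Your proposal is correct and follows essentially the same route as the paper, which simply cites Proposition \ref{Gconnect.m2} for the first bijection and Theorem \ref{cmi.thurs} for the second. Your additional path-lifting verification that $\partial$ converts Baer sums to pointwise addition (the endpoint computation $(i_1(a_1),i_2(a_2)) \equiv i_3(a_1+a_2) \bmod \triangledown(A)$) is correct and fills in a point the paper's one-line proof leaves implicit when it asserts an \emph{isomorphism} rather than a mere bijection.
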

\begin{proof}
The first equivalence follows from Proposition \ref{Gconnect.m2}.
\end{proof}

\begin{defn} \label{beta.defn} For a nice well-pointed connected $t$-group $G$, write
\beq
\beta_G: \mb E(G,A)  \to \HH^2(BG,A)  
\eeq
for the inverse of the above isomorphism.
\end{defn}

Note that $\beta_G(\ms E)$ is characterized by the property that
\begin{equation} \label{characterized} 
m^2(\beta_G(\ms E))=\partial_{\ms E},
\end{equation}
where $\partial_{\ms E}$ is the connecting homomorphism of Theorem \ref{cmi.thurs}.

 \subsection{Case of $G$ discrete}
 
Now let $G$ be a discrete group.  If $p: X \to BG$ is a $BA$-bundle, then applying Theorem \ref{homotopy.fibration} gives 
 a central extension
 \beq
 0 \to A \to \pi_1(X) \to G \to 1,
 \eeq
since $\pi_2(BG)=\pi_1(G)=0$ and $BA$ is connected.
 
This gives a map $\ms P: \bun_{BA}(BG) \to \mb E(G,A)$. 
If $(E,p,i) \in \mb E(G,A)$, then according to  \cite[1B.9, page 90]{hatcher}, we have $\pi_1(Bp)=p$ and $\pi_1(Bi)=i$.
 Therefore $\ms P \circ \ms B$ is the identity on $\mb E(G,A)$, and in particular:

 \begin{prop} \label{B.map.inj} When $G$ is discrete, the map $\ms B$ is injective.
 \end{prop}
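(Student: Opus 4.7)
The plan is to prove that $\ms P \circ \ms B$ is the identity on $\mb E(G,A)$, which exhibits $\ms P$ as a left inverse to $\ms B$ and therefore forces $\ms B$ to be injective. Fix $\ms E=(E,i,p) \in {\bf E}(G,A)$ with $G$ discrete.

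First I would verify that $E$ is itself discrete. Since $p:E \to G$ is an $A$-cover of the discrete space $G$ and $A$ is discrete, local triviality makes every point of $E$ an open subset. Consequently $BE$, $BA$, and $BG$ are Eilenberg-MacLane spaces of type $K(-,1)$, and the maps $\partial_1$ from \eqref{pi1bg} identify $\pi_1(BE)$, $\pi_1(BA)$, and $\pi_1(BG)$ naturally with $E$, $A$, and $G$ respectively.

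Next I would apply Theorem \ref{homotopy.fibration} to the $BA$-bundle $Bp:BE \to BG$ supplied by Proposition \ref{fibration.normal}. Because $\pi_1(G)=0$, the isomorphism \eqref{bottle} gives $\pi_2(BG)=0$, and since $BA$ is connected the long exact sequence collapses to the short exact sequence
\beq
0 \to \pi_1(BA) \to \pi_1(BE) \to \pi_1(BG) \to 1,
\eeq
which by definition of $\ms P$ is the central extension $\ms P(\ms B \ms E)$.

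It then remains to match the injection and projection of this sequence with $i$ and $p$. This is precisely \cite[1B.9, page 90]{hatcher}: under the natural identification $\pi_1(BH) \cong H$ for a discrete group $H$, one has $\pi_1(B\varphi)=\varphi$ for every group homomorphism $\varphi$. Applied to $i$ and $p$, this yields $\ms P(\ms B \ms E) \cong \ms E$ and concludes the argument. The only subtle point is to confirm that naturality of the LES of the fibration is compatible with the identifications $\partial_1: \pi_1(BH) \overset{\sim}{\to} H$ on morphisms of discrete groups; this compatibility is exactly what Hatcher's result encodes, so no further obstacle arises.
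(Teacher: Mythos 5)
Your proposal is correct and follows essentially the same route as the paper: both construct the left inverse $\ms P$ from the long exact homotopy sequence of the $BA$-bundle $BE \to BG$ (using $\pi_2(BG)=\pi_1(G)=0$ and the connectedness of $BA$) and then invoke \cite[1B.9, page 90]{hatcher} to identify $\pi_1(Bp)=p$ and $\pi_1(Bi)=i$, so that $\ms P \circ \ms B=\id$. Your additional check that $E$ is discrete is a harmless (and reasonable) elaboration of what the paper leaves implicit.
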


 \subsection{Connected Component}
 
 Let $G$ be a nice well-pointed $t$-group, and put $\pi_0=\pi_0(G)$. The inclusion map $\iota$ and quotient map $\qoppa$ give an exact sequence
\beq
1 \to G^\circ \overset{\iota}{\to} G \overset{\qoppa}{\to} \pi_0 \to 1.
\eeq
 
\begin{lemma} \label{isom.pi.bs} The map $\qoppa$   induces an isomorphism $B\qoppa_*: \pi_1(BG) \overset{\sim}{\to} \pi_1(B\pi_0)$.
   
\end{lemma}

 \begin{proof}  
 Generally, a morphism $\varphi: G \to H$ induces a commuting square
 \beq
	\xymatrix{
		\pi_{n+1}(BG) \ar[r] \ar[d]  &  \pi_{n+1}(BH) \ar[d]   \\
		\pi_n(G) \ar[r]  & \pi_n(H),   \\
	}
	\eeq
 where the vertical arrows are isomorphisms. In the case of $\qoppa$ and $n=0$, the bottom map is clearly an isomorphism, hence so too is the top map.
 \end{proof}

 \subsection{$K(A,2)$-Spaces} \label{ka2}  
   When a path-connected topological space $K$ has $\pi_2(K) \cong A$ as its only nontrivial homotopy group, it is called a $K(A,2)$-space.   
 Fixing an isomorphism $\zeta: \pi_2(K)  \overset{\sim}{\to} A$, and composing with the Hurewicz map gives an isomorphism $h_2 \circ \zeta^{-1}: A \overset{\sim}{\to} \HH_2(K)$. The inverse of this map, we can write as  $\zeta \circ h_2^{-1} \in \Hom_{\Z}(\HH_2(K),A)$. The UCT gives an isomorphism
\beq
q^2: \HH^2(K,A)\overset{\sim}{\to} \Hom(\HH_2(K),A).
\eeq
Therefore there is a unique $\iota=\iota_K \in \HH^2(K,A)$ with $q^2(\iota)=\zeta \circ h_2^{-1}$.
 
Let $f : B \to K$, with $B$ path-connected. From $f$ we obtain $\pi_2(f): \pi_2(B) \to \pi_2(K)$, 
its transpose 
\beq
\pi_2(f)^*:  \Hom(\pi_2(B),A) \to \Hom(\pi_2(K),A),
\eeq
and also $f^*: \HH^2(K,A) \to \HH^2(B,A)$.
From the naturality of $h^2$ and $q^2$ we note:
	\begin{equation} \label{m2.f.iota}
	h^2 (q^2(f^*(\iota))) =\pi_2(f)^*(\zeta). 
	\end{equation}

\begin{example} \label{bba} The classifying space $BA$ is an abelian $k$-group, as noted in Section \ref{EG.gp}. Let $K=B(BA)$; it is a $K(A,2)$-space. We may define $\zeta=\zeta_A: \pi_2(K)  \overset{\sim}{\to} A$ as the composition
\beq
\pi_2(K)  \overset{\sim}{\to}  \pi_1(BA)  \overset{\sim}{\to}  \pi_0(A)=A
\eeq
of the connecting maps from the long exact homotopy sequences associated to the $A$-bundle $EA \to BA$ and the $BA$-bundle $E(BA) \to K$.
\end{example}

Let $\psi: A \to A'$ be a homomorphism of abelian groups. It induces 
\beq
\psi_*: \HH^2(B^2A,A) \to \HH^2(B^2A,A')
\eeq
and
\beq
(B^2\psi)^*: \HH^2(B^2A',A') \to \HH^2(B^2A,A').
\eeq

\begin{lemma} \label{roti.sabzi} We have $\psi_*(\iota_A)=(B^2 \psi)^*(\iota_{A'})$.
\end{lemma}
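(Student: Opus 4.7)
The plan is to verify the claimed equality in $H^2(B^2A,A')$ after applying the comparison map $q^2$ of the Universal Coefficient Theorem
\beq
0 \to \Ext^1(H_1(B^2A),A') \to H^2(B^2A,A') \overset{q^2}{\to} \Hom(H_2(B^2A),A') \to 0.
\eeq
Since $BA$ is path-connected, so too is $B^2A$, and by \eqref{bottle} applied to $K=B^2A$ one has $\pi_1(B^2A) \cong \pi_0(BA)=0$. Hurewicz then forces $H_1(B^2A)=0$, the $\Ext$-term vanishes, and $q^2$ is an isomorphism. Hence it suffices to check the identity inside $\Hom(H_2(B^2A),A')$.

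Next, I would exploit naturality of $q^2$ in both variables. Naturality in the coefficient variable gives
\beq
q^2(\psi_*(\iota_A)) = \psi \circ q^2(\iota_A) = \psi \circ \zeta_A \circ h_2^{-1},
\eeq
while naturality in the space variable gives
\beq
q^2((B^2\psi)^*(\iota_{A'})) = q^2(\iota_{A'}) \circ H_2(B^2\psi) = \zeta_{A'} \circ h_2^{-1} \circ H_2(B^2\psi).
\eeq
Applying naturality of the Hurewicz map in the form $H_2(B^2\psi)\circ h_2 = h_2 \circ \pi_2(B^2\psi)$, the second expression rewrites as $\zeta_{A'} \circ \pi_2(B^2\psi) \circ h_2^{-1}$. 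Thus the lemma reduces to the assertion
\beq
\psi \circ \zeta_A = \zeta_{A'} \circ \pi_2(B^2\psi)
\eeq
as maps $\pi_2(B^2A) \to A'$.

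Finally, I would establish this last identity by unwinding the definition of $\zeta$ from Example \ref{bba}, in which $\zeta_A$ is the composite of the connecting maps $\partial_2^{BA} \colon \pi_2(B^2A) \to \pi_1(BA)$ and $\partial_1^A \colon \pi_1(BA) \to \pi_0(A)=A$ attached to the universal bundles $E(BA) \to B^2A$ and $EA \to BA$. By the functoriality of the Milgram--Steenrod construction (Section \ref{BG.section}), the homomorphism $\psi$ induces maps of bundles $E\psi \colon EA\to EA'$ and $E(B\psi)\colon E(BA) \to E(BA')$ compatible with the projections to $BA,BA'$ and $B^2A,B^2A'$ respectively. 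The associated long exact sequences of homotopy groups are therefore natural, giving the commuting squares $\psi \circ \partial_1^A = \partial_1^{A'} \circ \pi_1(B\psi)$ and $\pi_1(B\psi) \circ \partial_2^{BA} = \partial_2^{BA'} \circ \pi_2(B^2\psi)$. Concatenating produces the required identity.

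The main obstacle is merely bookkeeping: there is no topological content beyond naturality of the UCT, the Hurewicz map, and the connecting homomorphisms in bundle LES's. The only point to watch is that $\zeta_A$ really is natural in $A$, which is exactly what the functoriality of $E(-)$ and $B(-)$ guarantees.
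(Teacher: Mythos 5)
Your proposal is correct and follows essentially the same route as the paper: both arguments reduce the identity to the equation $\psi \circ \zeta_A = \zeta_{A'}\circ \pi_2(B^2\psi)$ via the injectivity and naturality of the UCT/Hurewicz comparison maps, and then deduce that equation from the naturality of the connecting homomorphisms defining $\zeta$. The only cosmetic difference is that you apply $q^2$ alone (justifying its injectivity from $H_1(B^2A)=0$) and handle the Hurewicz map separately, whereas the paper works with the composite $h^2\circ q^2$ throughout.
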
 
 
 \begin{proof}
 From the diagram
 \beq
	\xymatrix{
		\pi_2(B^2A) \ar[r] \ar[d]^{B^2 \psi_*} &  \pi_1(BA) \ar[d]^{B^2 \psi_*} \ar[r]& A \ar[d]^{\psi_*}  \\
		\pi_2(B^2A') \ar[r] & \pi_1(BA') \ar[r]  &  A'   \\
	}
	\eeq
 we see that $ \psi_* \circ \zeta_A= \zeta_{A'} \circ B^2 \psi_*$. The lemma then follows from the commutativity of the following diagram.
  \beq
	\xymatrix{
\HH^2(B^2A,A) \ar[r]^{\psi_*} \ar[d]^{h^2 \circ q^2} & \HH^2(B^2A,A')   \ar[d]^{h^2 \circ q^2}&\HH^2(B^2A',A') \ar[d]^{h^2 \circ q^2} \ar[l]_{(B^2 \psi)^*}\\
	 \Hom(\pi_2(B^2A),A) \ar[r]^{\psi_*}  &  \Hom(\pi_2(B^2A),A') &  \Hom(\pi_2(B^2A'),A') \ar[l]_{(B^2 \psi)^*} \\
	}
	\eeq
 \end{proof}

 Note that $K=BBA$ is an abelian group under 
 \beq
 \mu=B^2m: K \times K \to K,
 \eeq
 where $m$ is the addition map for $A$.
 Given $f_1,f_2: B \to K$, write
 \beq
 f_1+f_2=\mu \circ (f_1 \times f_2).
 \eeq
 
Denote by $i_1, i_2: K \to K\times K$   the inclusions $i_1(x)= (x,1_K)$ and $i_2(x)= (1_K,x)$.
 Let $\pr_1$ and $\pr_2$ be the first and second projections from $K \times K$ to $K$. 
 
 \begin{lemma}  \label{Hspace}  We have
 \beq
 (f_1+f_2)^*(\iota_A)=f_1^*(\iota_A)+f_2^*(\iota_A).
 \eeq
 \end{lemma}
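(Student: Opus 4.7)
The plan is to reduce the lemma to the statement that $\iota_A$ is \emph{primitive} with respect to the $H$-space multiplication $\mu$, i.e.
\beq
\mu^*(\iota_A) = \pr_1^*(\iota_A) + \pr_2^*(\iota_A) \quad \text{in } H^2(K \times K, A),
\eeq
and then simply pull back along $f_1 \times f_2$. Observe that $f_1 + f_2 = \mu \circ (f_1 \times f_2)$ gives $(f_1+f_2)^* = (f_1 \times f_2)^* \circ \mu^*$, and that $\pr_i \circ (f_1 \times f_2) = f_i$ gives $(f_1 \times f_2)^* \circ \pr_i^* = f_i^*$. So once primitivity is established, the conclusion is immediate.

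To prove primitivity, I would use the characterization in \eqref{m2.f.iota}. Since $K = B^2 A$ is simply connected, the product $K \times K$ is $1$-connected, so Hurewicz gives that $h_2 \colon \pi_2(K\times K) \to H_2(K \times K)$ is an isomorphism (hence $h^2$ is injective), and the UCT exact sequence \eqref{UCT} for $K \times K$ has vanishing $\Ext$-term (since $H_1(K\times K) = 0$), so $q^2$ is an isomorphism. Therefore the composition
\beq
h^2 \circ q^2 \colon H^2(K \times K, A) \hookrightarrow \Hom(\pi_2(K \times K), A)
\eeq
is injective, and it suffices to check primitivity after applying it.

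By \eqref{m2.f.iota} applied to $\mu$ and to the $\pr_i$, we get
\beq
h^2(q^2(\mu^*(\iota_A))) = \zeta_A \circ \pi_2(\mu), \qquad h^2(q^2(\pr_i^*(\iota_A))) = \zeta_A \circ \pi_2(\pr_i).
\eeq
Because $K$ is an abelian topological group with multiplication $\mu$, the induced map $\pi_2(\mu) \colon \pi_2(K) \times \pi_2(K) \to \pi_2(K)$ is the group addition, which under the identification $\pi_2(K \times K) = \pi_2(K) \times \pi_2(K)$ coincides with $\pi_2(\pr_1) + \pi_2(\pr_2)$. Applying $\zeta_A$ and invoking injectivity of $h^2 \circ q^2$ yields the primitivity relation.

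The only real content is step two: identifying $\pi_2(\mu)$ with $\pi_2(\pr_1)+\pi_2(\pr_2)$ and verifying the injectivity of $h^2 \circ q^2$ on $H^2(K\times K, A)$. The first is a standard fact for topological abelian groups (or any connected $H$-space, using that $\pi_2$ is abelian and the two natural group structures agree), and the second is a direct application of Hurewicz and the UCT, both legitimate because $K\times K$ is simply connected.
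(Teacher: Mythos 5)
Your proof is correct, and it shares the paper's overall skeleton: both arguments reduce the lemma to the primitivity statement $\mu^*(\iota_A)=\pr_1^*(\iota_A)+\pr_2^*(\iota_A)$ and then pull back along $f_1\times f_2$. Where you diverge is in how primitivity is justified. The paper observes that $\mu\circ i_1=\mu\circ i_2=\id_K$ and then asserts primitivity ``since $K$ is simply connected,'' the implicit mechanism being the K\"unneth decomposition $H^2(K\times K,A)\cong \pr_1^*H^2(K,A)\oplus\pr_2^*H^2(K,A)$ (valid because $H^1(K)=0$ kills the cross term), with the two components recovered by $i_1^*$ and $i_2^*$. You instead detect classes in $H^2(K\times K,A)$ on $\pi_2$: since $K\times K$ is $1$-connected, Hurewicz and the UCT make $h^2\circ q^2$ injective, \eqref{m2.f.iota} converts each side of the primitivity identity into a homomorphism $\pi_2(K)\times\pi_2(K)\to A$, and the Eckmann--Hilton identification $\pi_2(\mu)=\pi_2(\pr_1)+\pi_2(\pr_2)$ (which itself uses $\mu\circ i_j=\id_K$) finishes the computation. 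Your route avoids invoking the K\"unneth theorem and instead reuses the $h^2$, $q^2$, $\zeta$ machinery the paper has already set up for $K(A,2)$-spaces, at the cost of routing the argument through homotopy groups; it also has the virtue of actually supplying the justification that the paper leaves as a one-line assertion. Both arguments ultimately rest on the same input, namely the simple connectivity of $K$.
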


 \begin{proof}

Of course,
$\mu \circ i_1=\mu \circ i_2 = \id_K.$ Thus   $i_1^*(\mu^*(x))= x $ and $i_2^*(\mu^*(x))= x $  for every $x \in \HH^2(K,A)$.
Since $K$ is simply connected, we have $\mu^*(\iota_A)= \pr_1^*(\iota_A) + \pr_2^*(\iota_A)$. Therefore
 \beq
 \begin{split}
(f_1+f_2)^*(\iota_A) &= (f_1 \times f_2)^*(\mu^*(\iota_A)) \\
	&=(f_1 \times f_2)^*(\pr_1^*(\iota_A) +\pr_2^*(\iota_A))\\
	&= f_1^*(\iota_A) + f_2^*(\iota_A). \\
\end{split}
\eeq
\end{proof}

\section{An Anticommuting Square}

\subsection{Several Bundles in One} \label{x.e.sec}
Let $A$ be a discrete group, $G$ a nice   well-pointed $t$-group, and  $\ms E=(E,i,p) \in {\bf E}(G,A)$.  We form the fibre product $X_{\ms E}=EG \times_{BG} BE$ as under: 

 \beq
	\xymatrix{
		X_{\ms E} \ar[r] \ar[d]  &  BE \ar[d]^{Bp}   \\
		EG \ar[r]^{p_G}  & BG.  \\
	}
	\eeq
	
	
 The composition either way gives a ($G \times BA$)-bundle $X_{\ms E} \to BG$. From $Bp \circ p_E=p_G \circ Ep: EE \to BG$, we deduce a map $q:Ep \times p_E: EE \to X_{\ms E}$. Since $EE$ is contractible, this is a universal $A$-bundle. In particular $X_{\ms E}$ is isomorphic to $EE/A$; this gives $X_{\ms E}$ the structure of a well-pointed $k$-group.

The fibre inclusion $G \times BA \hookrightarrow X_{\ms E}$ is a group homomorphism. Write $\iota_G: G \to X_{\ms E}$ and  $\iota_{BA}: BA \to X_{\ms E}$  for the restrictions of this inclusion to $G$ and $BA$. The following is straightforward, but shows the utility of $X_{\ms E}$:
 
 \begin{prop} \label{they're.class}
 \begin{enumerate}
\item  The inclusion $\iota_G: G \to X_{\ms E}$  is a classifying map for the $A$-bundle $(E,p)$.
 \item The inclusion $\iota_{BA}: BA \to X_{\ms E}$  is a classifying map for the $A$-bundle $(EA, p_A)$.
 \end{enumerate}
 \end{prop}
 
 Applying $B$ to $q$ gives  a universal $BA$-bundle
\beq
B q: BEE \to BX_{\ms E}.
\eeq
The pullback  $\iota_{BA}^*EE$ to $BA$ equals $EA$, hence $\iota_{BA}$ is a homotopy equivalence. Since it is also a group homomorphism, we see that $B\iota_{BA}: BBA \to BX_{\ms E}$ is a weak homotopy equivalence.

 Let $f_{\ms E}: BG \to B^2A$ be a classifying map for $Bp: BE \to BG$. Since $B \iota_G$ and 
 $(B\iota_{BA})_*f_{\ms E}$ are classifying maps pulling $BEE$ back to $BE$, they are homotopy equivalent.  
 So we may write
 \begin{equation} \label{pascal}
 (B\iota_{BA})_*f_{\ms E}=B \iota_G
 \end{equation}
 in $[BG,BX_{\ms E}]$. Since $(B\iota_{BA})_*: [BG,B^2A] \to [BG,B X_{\ms E}]$ is bijective, this determines 
  $f_{\ms E}$ up to homotopy.

To summarize:
\begin{enumerate}
\item $q:EE \to X_{\ms E}$ is a universal $A$-bundle.
\item $BEE \to BX_{\ms E}$ is a universal $BA$-bundle.
\item The inclusions $\iota_G$ and $\iota_{BA}$ are classifying maps.
\item The classifying map $f_{\ms E}: BG \to B^2A$ is determined (up to homotopy) by \eqref{pascal}.
\end{enumerate}


 \subsection{Connecting Homomorphisms} \label{Connecting Homomorphisms} 
In this section, we make a compatibility check between connecting homomorphisms arising from the various classifying spaces associated to an extension.  
Let $A$ be a discrete group, $G$ a nice  well-pointed $t$-group, and  $\ms E=(E,i,\rho) \in {\bf E}(G,A)$. Since $\rho$ is a fibration, we have a connecting homomorphism
\beq
\pi_1(G) \overset{\partial_{\ms E}}{\to} \pi_0(A)=A.
\eeq
We also have  connecting maps
\beq
\pi_1(BA)  \overset{\partial_1}{\to} \pi_0(A) \text{ and } \pi_2(BG)  \overset{\partial_2}{\to} \pi_1(G) 
\eeq
from \eqref{bottle}. (Here $\partial_1=\partial_1^A$ and  $\partial_2=\partial_2^G$.) 
Applying the functor
\beq
\ms B: {\bf E}(G,A) \to \Bun_{BG}(BA)
\eeq
gives a $BA$-bundle $BE \to BG$, and in particular another connecting homomorphism
\beq
\pi_2(BG) \overset{\partial_{BE}}{\to} \pi_1(BA)=A
\eeq

 \begin{prop}  The sequence 
 \begin{equation} \label{ellipse}
 \pi_2(BG) \overset{\partial_2 \times \partial_{BE}}{\to} \pi_1(G) \times \pi_1(BA) \overset{\partial_{\mc E}+\partial_1}{\to} A 
 \end{equation}
is exact at $\pi_1(G) \times \pi_1(BA)$.
\end{prop}

\begin{proof}

 From the LES of homotopy groups we have
 \beq 
 \pi_2(BG) \overset{\partial_{X_{\ms E}}}{\to} \pi_1(G \times BA)  \to \pi_1(X_{\ms E})
 \eeq
exact at $\pi_1(G \times BA)$, with   $\partial_{X_{\ms E}}=\partial_2 \times \partial_{BE}$.
 From the $A$-bundle $(EE,q)$ above we get an isomorphism $\partial_q: \pi_1(X_{\ms E}) \cong A$  of homotopy groups. Since the inclusions of $G$ and $BA$ into $X_{\ms E}$ are classifying maps (Proposition \ref{they're.class}), we obtain commuting squares
 \beq
 \xymatrix{
		\pi_1(G) \ar[r]^{\partial_{\mc E}} \ar[d]  &  A \ar[d]_{=}  \\
		\pi_1(X_{\ms E}) \ar[r]^{\partial_q} & A   \\
	}
\eeq
 and
  \beq
 \xymatrix{
		\pi_1(BA) \ar[r]^{\partial_1} \ar[d]  &  A \ar[d]_{=}  \\
		\pi_1(X_{\ms E}) \ar[r]^{\partial_q} & A   \\
	}
\eeq
 Hence the composition
 \beq
 \pi_1(G) \times \pi_1(BA) \overset{\sim}{\to} \pi_1(G \times BA) \overset{\iota_{\times}}\to \pi_1(X_{\ms E}) \overset{\partial_q}{\to} A
 \eeq
 is equal to $ \partial_{\ms E}+\partial_1$. The proposition follows.
\end{proof}

\begin{cor}\label{anticommute}
	
	The square  
	\beq
	\xymatrix{
		\pi_2(BG) \ar[r]^{\partial_2} \ar[d]_{\partial_{BE}} &  \pi_1(G) \ar[d]^{\partial_{\ms E}}  \\
		\pi_1(BA) \ar[r]^{\partial_1}  & A   \\
	}
	\eeq
	anticommutes, meaning that $\partial_1 \circ \partial_{BE}=- \partial_E \circ \partial_2$.
\end{cor}

\section{Disconnected Groups} \label{disc.section}

In this section we give a cohomology version of the exact sequence \eqref{left.ex.here}.  A more sophisticated approach would be using \cite[Theorem 5.9, page 147]{McCleary}.


\begin{prop} Let $G$ be a nice well-pointed $t$-group. The sequence
\beq \label{H_2exact} 
	 \HH_2(BG^\circ) \xrightarrow{B\iota_*} \HH_2(BG) \xrightarrow{B\qoppa_*} \HH_2(B\pi_0) \to 0
	 \eeq 
	obtained from \eqref{qoppa} is exact.
\end{prop}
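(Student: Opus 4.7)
The plan is to apply the Serre spectral sequence to the fibration $BG^\circ \to BG \to B\pi_0$ produced, up to weak equivalence, by Lemma \ref{fibre.buns}. Its $E^2$-page in integral homology reads
$$E^2_{p,q} = H_p(B\pi_0;\, \mathcal{H}_q(BG^\circ)),$$
with $\mathcal{H}_q(BG^\circ)$ the local system on $B\pi_0$ given by the $\pi_0 = \pi_1(B\pi_0)$ action on $H_q(BG^\circ)$. The first observation is that $G^\circ$ is connected, so $\pi_1(BG^\circ) \cong \pi_0(G^\circ) = 0$ by \eqref{pi1bg}; thus $BG^\circ$ is simply connected and Hurewicz gives $H_1(BG^\circ) = 0$. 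The entire row $E^2_{p,1}$ therefore vanishes, which in particular forces $E^\infty_{1,1} = 0$.

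To establish surjectivity of $B\qoppa_*$, I would observe that the position $E^2_{2,0} = H_2(B\pi_0)$ supports no nonzero outgoing differential: $d_2$ targets $E^2_{0,1}=0$, and the higher $d_r$ target spots of negative total degree; there are also no incoming differentials at this corner. Hence $E^\infty_{2,0} = H_2(B\pi_0)$, and the base-edge homomorphism $H_2(BG) \twoheadrightarrow E^\infty_{2,0}$ coincides with $B\qoppa_*$ by naturality of the spectral sequence with respect to the map of fibrations.

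For exactness at $H_2(BG)$, the vanishing $E^\infty_{1,1} = 0$ collapses the filtration on $H_2(BG)$ to the short exact sequence
$$0 \to E^\infty_{0,2} \to H_2(BG) \xrightarrow{B\qoppa_*} H_2(B\pi_0) \to 0,$$
so $\ker B\qoppa_* = E^\infty_{0,2}$. On the other hand, the fibre-edge homomorphism factors as
$$H_2(BG^\circ) \twoheadrightarrow H_2(BG^\circ)_{\pi_0} = E^2_{0,2} \twoheadrightarrow E^\infty_{0,2} \hookrightarrow H_2(BG),$$
so the image of $B\iota_*$ is precisely $E^\infty_{0,2}$, matching the kernel.

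The main obstacle is essentially bookkeeping: Lemma \ref{fibre.buns} supplies the fibration $B_GG^\circ \to B_uG \to B\pi_0$ together with weak equivalences $BG^\circ \simeq B_GG^\circ$ and $BG \simeq B_uG$, rather than an honest fibration $BG^\circ \to BG \to B\pi_0$. Identifying the Serre edge maps with $B\iota_*$ and $B\qoppa_*$ themselves requires a brief diagram chase invoking naturality of the spectral sequence and the fact that weak equivalences between CW-complexes induce isomorphisms on integral homology.
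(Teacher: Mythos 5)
Your argument is correct, but it takes a genuinely different route from the paper's --- in fact it is precisely the ``more sophisticated approach'' that the opening of Section \ref{disc.section} alludes to (via McCleary) and then deliberately avoids. You run the homology Serre spectral sequence of the fibre bundle of Lemma \ref{fibre.buns}: the vanishing of $H_1(BG^\circ)$ kills the row $q=1$, so $E^\infty_{1,1}=0$; the corner $E^2_{2,0}=H_2(B\pi_0)$ supports no differentials and survives to $E^\infty$, giving surjectivity of the base edge map $B\qoppa_*$; and both $\ker B\qoppa_*$ and the image of the fibre edge map $B\iota_*$ are identified with $E^\infty_{0,2}$. The transfer across the weak equivalences $BG^\circ\to B_GG^\circ$ and $BG\to B_uG$ that you flag is indeed only bookkeeping, since weak homotopy equivalences induce isomorphisms on singular homology of arbitrary spaces. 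The paper instead argues without spectral sequences: it shows that the image of $B\iota_*$ equals the image of the Hurewicz map $\pi_2(BG)\to H_2(BG)$ (using that $BG^\circ$ is simply connected and that $\pi_2(BG^\circ)\to\pi_2(BG)$ is an isomorphism by Corollary \ref{isom.pi.bs}), and then proves a standalone lemma --- via the relative Hurewicz theorem applied to the mapping cylinder of $BG\to B\pi_0$ --- stating that a map inducing an isomorphism on $\pi_1$ into a target with trivial $\pi_2$ and $\pi_3$ is surjective on $H_2$ with kernel the Hurewicz image. Your approach is more systematic and extends with no extra work to the full five-term exact sequence and to higher degrees; the paper's stays self-contained at the level of the long exact sequence of a pair plus relative Hurewicz. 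The essential inputs ($H_1(BG^\circ)=0$, $\pi_k(B\pi_0)=0$ for $k\ge 2$, and Corollary \ref{isom.pi.bs}) are the same in both.
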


\begin{proof}
Since $\qoppa \circ \iota$ is trivial, the induced map from $BG^\circ$ to $B \pi_0$ is constant, hence induces the zero map on cohomology. Next we show that the image of $B\iota_*$ is identical to the image of the Hurewicz map $h: \pi_2(BG) \to \HH_2(BG)$.
Since $BG^\circ$ is simply connected,  we may apply the Hurewicz theorem \cite[Corollary 10.8, page 478]{Bredon} to deduce that $h^\circ: \pi_2(BG^\circ) \to \HH_2(BG^\circ)$ is an isomorphism. By the commutative diagram
\beq 
\xymatrix{ \pi_2(BG^\circ) \ar[r]^{B\iota_*}_{\sim} \ar[d]^{h^\circ}_{\sim} & \pi_2(BG) \ar[d]^{h}\\
\HH_2(BG^\circ) \ar[r]^{B\iota_*} & \HH_2(BG)\\
},
\eeq 
 we deduce that $B\iota_*(\HH_2(BG^\circ)) = h(\pi_2(BG))$. By Lemma \ref{isom.pi.bs} the induced map 
$\pi_1(BG) \to \pi_1(B \pi_0)$ is an isomorphism. Moreover, $\pi_k(B \pi_0)=0$ for $k \geq 2$. The proposition is then a consequence of the following lemma. \end{proof}

\begin{lemma} Let $f:X \to Y$ be a continuous map between path-connected  spaces. Suppose that $f$ induces an isomorphism of fundamental groups, and that
$\pi_2(Y)=\pi_3(Y)=0$.
Then $f_*: \HH_2(X) \to \HH_2(Y)$ is surjective, and its kernel is  the image of the Hurewicz map $h: \pi_2(X) \to \HH_2(X)$.
\end{lemma}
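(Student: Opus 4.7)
The plan is to replace $f$ by an inclusion via its mapping cylinder and then to run the homotopy and homology long exact sequences of the pair $(Y,X)$ in parallel, bridged by the relative Hurewicz theorem. (In the intended application $X=BG$ and $Y=B\pi_0$ are CW complexes, so this replacement is harmless.)

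First I would record the homotopy of the pair. Exactness of
\beq
\pi_3(Y) \to \pi_3(Y,X) \xrightarrow{\partial} \pi_2(X) \xrightarrow{f_*} \pi_2(Y) \to \pi_2(Y,X) \to \pi_1(X) \xrightarrow{f_*} \pi_1(Y),
\eeq
together with the hypotheses $\pi_2(Y)=\pi_3(Y)=0$ and the assumption that $f_*$ is a $\pi_1$-isomorphism, immediately yields $\pi_2(Y,X)=0$ and an isomorphism $\partial\colon \pi_3(Y,X)\overset{\sim}{\to} \pi_2(X)$. Since $X$ and $Y$ are path-connected, $\pi_1(Y,X)$ is a point as well, so $(Y,X)$ is a $2$-connected pair.

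Next I would invoke the relative Hurewicz theorem in its non-simply-connected form: for a $2$-connected pair, $H_i(Y,X)=0$ for $i \leq 2$, and the Hurewicz map $h'\colon \pi_3(Y,X)\to H_3(Y,X)$ is surjective (its kernel being generated by the $\pi_1(X)$-action, though this is not needed here). Plugging the vanishing of $H_2(Y,X)$ into the homology long exact sequence
\beq
H_3(Y,X)\xrightarrow{\partial_*} H_2(X)\xrightarrow{f_*} H_2(Y)\to H_2(Y,X)=0
\eeq
gives the surjectivity of $f_*$, together with $\ker f_* = \im(\partial_*)$.

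Finally, naturality of the Hurewicz map supplies the commutative square
\beq
\xymatrix{
\pi_3(Y,X) \ar[r]^{h'} \ar[d]_{\partial}^{\wr} & H_3(Y,X) \ar[d]^{\partial_*} \\
\pi_2(X) \ar[r]^{h} & H_2(X),
}
\eeq
from which a one-line chase, using that $h'$ is surjective and $\partial$ is an isomorphism, yields $\im(\partial_*) = \im(\partial_*\circ h') = \im(h\circ \partial) = h(\pi_2(X))$, the desired description of $\ker f_*$. I expect the only real subtlety to be the appeal to relative Hurewicz without assuming $X$ simply connected; a spectral-sequence alternative would be to replace $f$ by a fibration and read off the low-degree terms of the Serre spectral sequence with local coefficients, but the pair-theoretic approach above seems cleanest.
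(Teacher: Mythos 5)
Your proposal is correct and follows essentially the same route as the paper's proof: replace $f$ by the inclusion into its mapping cylinder, check the pair is $2$-connected, apply the relative Hurewicz theorem to get $H_2$ of the pair vanishing and surjectivity of $\pi_3 \to H_3$ of the pair, and conclude via the homology long exact sequence and the naturality square for the Hurewicz map. The subtlety you flag (relative Hurewicz without $X$ simply connected) is handled in the paper exactly as you suggest, by citing the general form of the theorem.
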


\begin{proof} Let $M_f$ be the mapping cylinder associated to $f$. Then $f$ factors as
\beq
X \hookrightarrow M_f \to Y,
\eeq
and $M_f \to Y$ is a homotopy equivalence.  Note that the relative homotopy sets $\pi_1(M_f,X)$ and $\pi_2(M_f,X)$ are trivial.
By hypothesis, the induced map $\HH_1(X) \to \HH_1(Y)$ is an isomorphism, hence so too is $\HH_1(X) \to \HH_1(M_f)$. It follows from the LES of the pair $(M_f,X)$ that $\HH_1(M_f,X)=0$. 

Therefore by the Relative Hurewicz Theorem \cite[Theorem 10.7, page 478]{Bredon} we deduce that $\HH_2(M_f,X)=0$, and that the Hurewicz map is a surjection
\beq
  \pi_3(M_f,X) \twoheadrightarrow \HH_3(M_f,X).
\eeq
Furthermore, since $\pi_2(M_f)=\pi_3(M_f)=0$, the connecting map $\pi_3(M_f,X) \to \pi_2(X)$ is an isomorphism. The conclusion then follows from the diagram:
\beq
 \xymatrix{ 0 \ar[r] &
 \pi_3(M_f,X) \ar[d] \ar[r]^{\sim}& \pi_2(X) \ar[r] \ar[d] & 0 \\
\HH_3(M_f) \ar[r] & \HH_3(M_f,X)	\ar[r] & \HH_2(X) \ar[r] & \HH_2(M_f) \ar[r] & 0 \\
	}
\eeq 
 \end{proof}

\begin{thm}\label{cohomexact} For an abelian group $A$, there is an exact sequence
\beq
0 \to \HH^2(B \pi_0,A) \overset{B \qoppa^*}{\to} \HH^2(BG,A) \overset{B \iota^*}{\to} \HH^2(BG^\circ,A)^{\pi_0}.
\eeq
\end{thm}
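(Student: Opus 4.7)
The plan is to reduce left exactness of the cohomology sequence to the dual of the $H_2$ exact sequence \eqref{H_2exact} via the Universal Coefficient Theorem, and then verify the $\pi_0$-invariance of the image separately.

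First, I would assemble the commutative diagram of UCT short exact sequences for $B\pi_0$, $BG$, and $BG^\circ$. By Corollary \ref{isom.pi.bs}, $B\qoppa_*: \pi_1(BG) \to \pi_1(B\pi_0)$ is an isomorphism, and hence so is the induced map $H_1(BG) \to H_1(B\pi_0)$ via Hurewicz and abelianization; this renders the induced map $\Ext^1(H_1(B\pi_0), A) \to \Ext^1(H_1(BG), A)$ an isomorphism. Moreover, $H_1(BG^\circ) = 0$ since $\pi_1(BG^\circ) = \pi_0(G^\circ) = 0$, so its $\Ext^1$ term vanishes. Dualizing the previous proposition's sequence via $\Hom(-, A)$ yields left exactness of the rightmost column. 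A diagram chase then gives left exactness of the middle column: if $\alpha \in \ker(B\qoppa^*)$, then the image of $\alpha$ in $\Hom(H_2(B\pi_0), A)$ vanishes, so $\alpha$ comes from $\Ext^1(H_1(B\pi_0), A)$, and the $\Ext^1$ isomorphism combined with injectivity of the UCT inclusion for $BG$ forces $\alpha = 0$; conversely, for $\alpha \in \ker(B\iota^*)$, the class $q^2(\alpha)$ lifts to some $\phi \in \Hom(H_2(B\pi_0), A)$, which lifts further to $\tilde\phi \in H^2(B\pi_0, A)$ by UCT, and the difference $\alpha - B\qoppa^*(\tilde\phi)$ lies in the image of $\Ext^1(H_1(BG), A) \to H^2(BG, A)$, which pulls back to $H^2(B\pi_0, A)$ through the $\Ext^1$ isomorphism.

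For the $\pi_0$-invariance of the image, I would mimic Lemma \ref{rest.fixed.pts}. For each $g \in G$, the identity $\Int(g) \circ \iota = \iota \circ \Int(g)|_{G^\circ}$ gives, after applying $H^2(B(-), A)$, a commutative square relating $B\iota^*$ with the two $\Int(g)^*$ actions. The standard fact $B\Int(g) \simeq \id_{BG}$ (which can be seen by constructing an $\Int(g)$-equivariant homotopy between $E\Int(g)$ and right translation $\phi_g(x) = x \cdot g$, both $\Int(g)$-equivariant maps $EG \to EG$, using the contractibility of $EG$) forces $\Int(g)^*$ to be the identity on $H^2(BG, A)$. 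Hence $B\iota^*(\alpha)$ is fixed by $\Int(g)^*$ on $H^2(BG^\circ, A)$ for every $g \in G$; since conjugation by elements of $G^\circ$ acts trivially on $H^2(BG^\circ, A)$ by the same argument applied inside $G^\circ$, the action factors through $\pi_0$, giving the required invariance.

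The main obstacle, I expect, is the $\pi_0$-invariance step: the homotopy $B\Int(g) \simeq \id_{BG}$ is standard in algebraic topology but is not established elsewhere in the excerpt, so one must carefully produce the $\Int(g)$-equivariant homotopy using contractibility of $EG$ (or cite the fact). By contrast, the UCT diagram chase is purely formal once the isomorphism $H_1(BG) \cong H_1(B\pi_0)$ is in hand.
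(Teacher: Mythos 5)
Your proposal is correct and follows essentially the same route as the paper: the commutative diagram of UCT short exact sequences, the isomorphism on $\Ext^1$-terms coming from Corollary \ref{isom.pi.bs}, the dual of the $H_2$ sequence for the $\Hom$-row, and a diagram chase (the paper invokes the Five Lemma for injectivity of $B\qoppa^*$, which is the same computation). The only divergence is the $\pi_0$-invariance of the image of $B\iota^*$, which the paper simply cites from Milgram's Lemma 3.1, whereas you supply the standard argument via $B\Int(g)\simeq \id_{BG}$; that argument is sound.
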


\begin{proof}
By the naturality of the universal coefficient theorem (UCT) we have the following commutative diagram, with exact columns:

\beq \xymatrix{
& 0 \ar[d] & 0 \ar[d] &  \\
 & \Ext^1_{\Z}(\HH_1(B\pi_0),A) \ar[r]^{\sim} \ar[d]  & \Ext^1_{\Z}(\HH_1(BG),A) \ar[r]\ar[d]    & 0 \ar[d]   \\
& \HH^2(B \pi_0,A) \ar[r]^{B\qoppa^*} \ar[d]&  \HH^2(BG,A) \ar[r]^{B\iota^*}  \ar[d] & \HH^2(BG^\circ,A)  \ar[d]   \\
0  \ar[r] &   \Hom_{\Z}(\HH_2(B \pi_0),A)\ar[d]  \ar[r] & \Hom_{\Z}(\HH_2(BG),A) \ar[r]  \ar[d] & \Hom_{\Z}(\HH_2(BG^\circ),A) \ar[d] \\
&   0 & 0 & 0 \\
}
\eeq

The map between the two $\Ext$-groups is an isomorphism by Lemma \ref{isom.pi.bs}.
The $\Hom$-row is left exact by dualizing \eqref{H_2exact}. Therefore $B\qoppa^*$ is injective by the Five Lemma.
Note that $$B\iota^*(\HH^2(BG,A)) \subseteq \HH^2(BG^\circ, A)^{\pi_0}$$ by \cite[Lemma 3.1 Page 55]{Milgram}. 
It is clear that $B\iota^* \circ B\qoppa^* = 0$, and exactness at $\HH^2(BG,A)$ follows from a diagram chase. 
\end{proof}

 \section{From $BA$-Bundles to $\HH^2$} \label{kappa.section}
     Let $B$ be a topological space. A $BA$-bundle $(X,p)$ over $B$  corresponds to a map $f_p: B \to BBA$, unique up to homotopy, with $(X,p)$ the pullback of $EBA \to BBA$ under $f_p$. Recall from Section \ref{ka2} that $B(BA)$ is a $K(A,2)$, and  a ``fundamental class'' ${\bf c} \in \HH^2(BBA,A)$ was defined. 
Then we may set $\kappa(p)=f_{p}^*({\bf c})$. This construction defines a map
\beq
\kappa=\kappa_{A,B}: \bun_B(BA) \to \HH^2(B,A).
\eeq

 \begin{prop}\label{alphabunnatural} The map $\kappa_{A,B}$ is additive, bijective, and natural in both $A$ and $B$.
 \end{prop}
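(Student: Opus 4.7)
The plan is to observe that $\kappa$ factors as the composition
\[
\bun_B(BA) \overset{\sim}{\to} [B,BBA] \xrightarrow{f \mapsto f^*(\iota_A)} H^2(B,A),
\]
so each of the four assertions reduces to a statement about classifying maps. The first bijection is \eqref{univ.class.propz}, while the second is the standard representability of $H^2(-,A)$ by the Eilenberg--MacLane space $K(A,2) = BBA$ (Example \ref{bba}); together they give bijectivity of $\kappa$.

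For naturality in $B$: a map $g: B' \to B$ replaces the classifying map $f_p$ of a $BA$-bundle $(X,p)$ by $f_p \circ g$, so $\kappa(g^*(X,p)) = (f_p \circ g)^*(\iota_A) = g^*(\kappa(X,p))$. For naturality in $A$: given $\psi: A \to A'$, the induced bundle $X[BA']$ of Definition \ref{induced.bundle} is classified by $B^2\psi \circ f_p$, by the universal property of induction. Then Lemma \ref{roti.sabzi} yields
\[
\kappa(X[BA']) = f_p^*((B^2\psi)^*(\iota_{A'})) = f_p^*(\psi_*(\iota_A)) = \psi_*(\kappa(X,p)).
\]

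For additivity, if $(X_1,p_1),(X_2,p_2)$ are classified by $f_1,f_2: B \to B^2A$, I would show that the Baer sum $X_1 + X_2$ is classified by $f_1 + f_2 = \mu \circ (f_1 \times f_2)$, where $\mu$ is the $H$-space multiplication on $B^2A$. Granting this, Lemma \ref{Hspace} gives
\[
\kappa(X_1 + X_2) = (f_1 + f_2)^*(\iota_A) = f_1^*(\iota_A) + f_2^*(\iota_A) = \kappa(X_1) + \kappa(X_2).
\]
The one step I expect to be the main obstacle is the classifying-map claim for the Baer sum. The plan here is to recognize the Baer sum as the $BA$-bundle induced from the fibre product $X_1 \times_B X_2$, which is a $(BA \times BA)$-bundle classified by $(f_1,f_2)$ via Lemma \ref{bou.pullback}, along the multiplication $BA \times BA \to BA$. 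Functoriality of $B$ then identifies this induction on classifying maps with postcomposition by $\mu = B^2 m$, establishing the identity.
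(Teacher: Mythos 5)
Your proposal is correct and follows the paper's proof almost exactly: the same factorization of $\kappa$ through $[B,BBA]$ for bijectivity, the same (routine) argument for naturality in $B$, and the same use of Lemma \ref{roti.sabzi} together with the identity $f_{p'}=B^2\psi\circ f_p$ for naturality in $A$. The one place you diverge is the step you flag as the main obstacle, namely that $f_1+f_2$ classifies the Baer sum: the paper verifies this directly by using the abelian group structure on $EBA$ to define an overmap $\tilde f_3(x_1,x_2)=\tilde f_1(x_1)+\tilde f_2(x_2)$ on $X_1\times_B X_2$, checking it is constant on antidiagonal $BA$-orbits and $BA$-equivariant, and then applying Lemma \ref{bou.pullback}; you instead recognize the Baer sum as the bundle induced from the $(BA\times BA)$-bundle $X_1\times_B X_2$ along the multiplication $BA\times BA\to BA$ and invoke the general compatibility of classifying maps with induction. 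Both are sound; the paper's construction is more self-contained (it only uses machinery already set up for naturality in $A$), while yours is more conceptual and makes the additivity a formal consequence of the induction formalism, at the cost of needing the identification $B(BA\times BA)\simeq BBA\times BBA$ and the classifying-map-of-induced-bundle lemma in the generality of the product group. Either way the computation closes with Lemma \ref{Hspace}, exactly as in the paper.
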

  \begin{proof}
 
  Since $\kappa_{A,B}$ is the composition of the bijection $\bun_{B}(BA)  \overset{\sim}{\to} [B,BBA]$ of \eqref{univ.class.propz} with the bijection 
$  [B,BBA]  \overset{\sim}{\to} \HH^2(B,A)$ of  \cite[Theorem 4.57]{hatcher}, it is itself a bijection.
 
For additivity, let $(X_1,p_1), (X_2,p_2) \in \Bun_B(BA)$, and write $(X_3, p_3)$ for their Baer sum. Let $f_k: B \to BBA$ be classifying maps for $(X_k,p_k)$, with $k=1,2$. Write $\tilde f_k: X_k \to EBA$ for the overmaps. Since $EBA$ is an abelian group, we may define a map
$\tilde f_3: X_1 \times_{B} X_2 \to EBA$ by 
 \beq
 \tilde f_3(x_1,x_2)=\tilde f_1(x_1)+\tilde f_2(x_2).
 \eeq
 Since $\tilde f_3$ is constant on (antidiagonal) $BA$-orbits, it descends to a morphism
 \beq
 \tilde f_3: X_3 \to EBA.
 \eeq
 
 Being $BA$-equivariant, $\tilde f_3$ descends to a map $f_3:B \to BBA$. 
 Now $f_3=f_1+f_2$, and  $f_3$ is a classifying map for $X_3$.
 Thus
 \beq
 \begin{split}
 \kappa(p_3) &= f_3^*({\bf c}) \\ 
 &=f_1^*({\bf c})+f_2^*({\bf c}) \text{ (by Lemma \ref{Hspace})} \\
 &= \kappa(p_1) + \kappa(p_2) ,
  \end{split}
  \eeq
  which shows additivity. It is easy to see that $\kappa$ is natural in $B$. 
 For naturality in $A$, one uses Lemma \ref{roti.sabzi}; we omit the details.
   
 \end{proof}
 
  \section{From Extensions to $\HH^2$} \label{alpha.section}
  Finally we reach our main objective: the definition of the correspondence $\alpha$, its agreement with $-\beta$, and Theorem \ref{mid.intro}.
  \subsection{Definition of $\alpha_G$}
  Let   $G$ be a nice well-pointed $t$-group. Write $\alpha=\alpha_{G,A}$ for the 
  composition
  \beq
  \mb E(G,A) \overset{\ms B}{\to} \bun_{BG}(BA) \overset{\kappa}{\to} \HH^2(BG,A).
  \eeq
  
  By Propositions  \ref{B.map.inj} and \ref{alphabunnatural},  and the results of Section \ref{rain.car}, we deduce:
  \begin{prop}  \label{G.disc.alpha.inj} 
  The map $\alpha_{G,A}$ is a homomorphism of abelian groups, natural in $G$ and $A$. 
  \end{prop}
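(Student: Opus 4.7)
The plan is to deduce everything directly from the factorization $\alpha_{G,A} = \kappa_{A,BG} \circ \ms B$, so the work is essentially combining the already-established properties of each factor.

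First I would address the homomorphism claim. Proposition \ref{Baer.2.Baer} says $\ms B$ carries the Baer sum in $\mb E(G,A)$ to the Baer sum in $\bun_{BG}(BA)$, and Proposition \ref{alphabunnatural} says $\kappa_{A,BG}$ is additive with respect to the Baer sum on $\bun_{BG}(BA)$. Composing two additive maps yields an additive map, so $\alpha_{G,A}$ is a group homomorphism; the abelian group structure on $H^2(BG,A)$ is the usual one.

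Next, I would verify naturality. For a homomorphism $\varphi: G' \to G$ of CW-groups, Proposition \ref{Bp.funct} gives $\ms B(\varphi^*\ms E) \cong (B\varphi)^*(\ms B \ms E)$, while the naturality of $\kappa$ in the base space (Proposition \ref{alphabunnatural}) yields $\kappa_{A,BG'}((B\varphi)^*(Y,p)) = (B\varphi)^*(\kappa_{A,BG}(Y,p))$ for any $BA$-bundle $(Y,p)$ over $BG$. Stringing these together gives $\alpha_{G',A}(\varphi^*\ms E) = \varphi^*(\alpha_{G,A}(\ms E))$, as the map $H^2(BG,A) \to H^2(BG',A)$ induced by $\varphi$ is by definition $(B\varphi)^*$. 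Naturality in $A$ is identical in spirit: the last proposition of Section \ref{rain.car} shows $\ms B$ is natural in $A$ (via the induced-bundle construction along $B\psi$), and the naturality of $\kappa$ in $A$ from Proposition \ref{alphabunnatural} completes the square.

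Finally, for injectivity when $G$ is discrete: Proposition \ref{B.map.inj} says $\ms B: \mb E(G,A) \to \bun_{BG}(BA)$ is injective in that case, and $\kappa_{A,BG}$ is a bijection in complete generality by Proposition \ref{alphabunnatural}. The composition of an injection with a bijection is an injection, finishing the proof.

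There is no real obstacle here; every ingredient has been proved earlier, and the proposition is a clean book-keeping assembly. The only point that requires a moment's care is to make sure the two notions of ``naturality in $A$'' match up: $\ms B$ pushes an extension $\ms E$ along $\psi$ via the quotient $E' = (E \times A')/A_\psi$ of \eqref{e.prime}, and we need $\ms B(\psi_*\ms E) = (\psi_*)(\ms B\ms E)$ as $BA'$-bundles, where the right-hand side uses the induced-bundle construction of Definition \ref{induced.bundle} along $B\psi$. This is exactly the content of the final proposition of Section \ref{rain.car}, so no additional argument is needed.
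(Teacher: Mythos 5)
Your proposal is correct and follows exactly the route the paper takes: the paper deduces this proposition by citing Propositions \ref{B.map.inj} and \ref{alphabunnatural} together with the results of Section \ref{rain.car} (i.e., Propositions \ref{Bp.funct} and \ref{Baer.2.Baer} and the naturality of $\ms B$ in $A$), which is precisely the assembly you spell out. Your version just makes the bookkeeping explicit.
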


 
 \subsection{Case of Cocycle Extensions} \label{cocycle.subs}

\begin{defn} A $2$-cocycle is a (continuous) function $z: G \times G \to A$ satisfying $z(g,1)=0=z(1,g)$ and
\beq
z(g,h)+ z(gh,k)=z(h,k)+z(g,hk)
\eeq
for all $g,h,k \in G$. Write $Z^2(G,A)$ for the additive group of $2$-cocycles.
\end{defn}

Given $z \in Z^2(G,A)$, let us recall the corresponding central extension $\ms E_z=(E,p,i)$ of $G$ by $A$.
One takes $E=G \times A$ as a space, and defines multiplication by
\begin{equation} \label{cocyk}
(g,a) \cdot (h,b)=(gh, a+b+z(g,h)).
\end{equation}
Let $p$ be the first projection, and let $i(a)=(1,a)$. Then $(E,p)$ is a trivial $A$-bundle.

Recall from Section \ref{Milgram.Steenrod.sec} that a typical element of $B_2G$ is written as $[g_1,t_1][g_2,t_2]$ for $g_1,g_2 \in G$ and $0 \leq t_1 \leq t_2 \leq 1$, and a typical element of $B^2A$ is written as
$[a;t_1,t_2]$, with $a \in A$ and $t_1,t_2 \in I$. We also had a homeomorphism $\eta_2: \Delta_2/\partial \Delta_2 \cong I^2/\partial I^2$ given by $\eta_2(t_1,t_2)=(\frac{t_1}{t_2},t_2)$.

 \begin{prop} \label{defn.phi.here}
 There is a classifying map $f_z: BG \to B^2A$ for $\ms E_z$ whose restriction to $B_2G$ is given by
 \beq
 \phi_z :[g_1,t_1][g_2,t_2] \mapsto   \left[z(g_1,g_2);\eta_2(t_1,t_2) \right], 
 \eeq
 and taking $B_1G$ to $1_{B^2A}$.
 \end{prop}

 \begin{proof}

 Recall from \eqref{suspend.disbelief} the surjection  $q: B_2G \to \Sigma^2(G \wedge G)$.  Consider the diagram
 
   \beq 
	\xymatrix{
		Z^2(G,A) \ar[d]^{} \ar[r]^{}   &[BG,B^2A] \ar[d]^{} \\
		\HH^0(G \wedge G,A) \ar[d]^{\Sigma^2}     &[B_2 G,B^2A]\\
		 \HH^2(\Sigma^2(G \wedge G),A) \ar[ur]^{q^*} .}
	\eeq

  The top horizontal map takes a cocycle $z$ to a classifying map $f_z$ for the $BA$-bundle $BE \to BG$. The right vertical map is induced by the inclusion $B_2G \to BG$.  Traversing down twice, then right   takes $z$ to $\phi_z$. Hence it is enough to show that the diagram commutes. For this, we extend the diagram to the right by postcomposing with the weak homotopy equivalence $B \iota_{BA}$, with $\iota_{BA}$ as in Section \ref{x.e.sec}:
    \beq 
	\xymatrix{
		 [BG,B^2A] \ar[d] \ar[r] & [BG,BX_{\ms E_z}] \ar[d] \\
		 [B_2 G,B^2A]  \ar[r] & [B_2G,BX_{\ms E_z}]\\}
	\eeq 
Write $R_z$ for the restriction of $(B \iota_{BA})_* \phi_z$ to $B_2G$; it is given by
\beq
R_z:  [g_1,t_1][g_2,t_2] \mapsto [1_{EG} \times [z(g_1,g_2), t_1/t_2],t_2]. 
  \eeq
Here $[z(g_1,g_2),t_1/t_2] \in B_1A \subset BE$, so that $1_{EG} \times [z(g_1,g_2),t_1/t_2] \in X_{\ms E}$, and so  $[1_{EG} \times [z(g_1,g_2), t_1/t_2],t_2] \in B_1X_{\ms E}$. On the other hand, by \eqref{pascal},  $(B \iota_{BA})_* f_z$ is homotopic to $B \iota_G$. Write $L_z$ for the restriction of $B\iota_G$ to $B_2G$; it is given by
  \beq
L_z: [g_1,t_1][g_2,t_2] \mapsto [g_1 \times 1_{BE},t_1][g_2 \times 1_{BE},t_2] \in B_2 X_{\ms E_z}. 
  \eeq

The diagram commutes if  $L_z$ and $R_z$ are homotopic.  Write $\xi: G \times I \to X_{\ms E_z}$ for the map 
\beq
\xi(g,s)=(g,s) \times [g,s].
\eeq

Given $s \in I$, consider the following formula for a homotopy $H_s:B_2G \to BX_{\ms E_z}$:
\beq
[g_1,t_1][g_2,t_2] \mapsto \left[\xi(g_1,s),t_1 \right] \left[(g_2,s) \times [g_2,s] \left[z^{-1},s \frac{t_1}{t_2} \right] \left[z,\frac{t_1}{t_2} \right],t_2 \right],
\eeq
where $z=z(g_1,g_2)$. Some comments are in order to see this is well-defined. First, note that
\beq
\begin{split}
H_s([g_1,t][g_2,t]) &=[(g_1g_2,s) \times [g_1,s][g_2,s][z^{-1},s],t] \\
				&= [\xi(g_1g_2,s),t] \\
				&= H_s([g_1g_2,t][1,1]), \\
				\end{split}
				\eeq
where we have used \eqref{cocyk} for the second equality. Since $z(1,g)=z(g,1)=0$, we have
\beq
H_s([g,t][1,t'])=[\xi(g,s),t] =H_s([1,t'][g,t]).
\eeq
It is easy to see  also that for all $g,g' \in G$ and $s,t \in I$ we have $H_s([g',0][g,t])=[\xi(g,s),t]=H_s([g,t][g',1])$. Hence $H_s$ is well-defined.  Finally note that $H_0=R_z$ and $H_1=L_z$, so this gives the desired homotopy.

 \end{proof}

\subsection{Case of $G$ discrete}

 \subsubsection{Bar Construction and Group Cohomology} \label{bar.const.subsub}
 Let $G$ be discrete. Let us recall a version of the ``bar'' construction for a classifying space of $G$. (See for example \cite[Example 1B.7]{hatcher}.)
A $\Delta$-complex $\bar EG$ is defined with $n$-simplices corresponding to $G^{n+1}$. 
We write $\lip g_0, \ldots, g_n \rip$ for a typical $n$-simplex; there is an obvious way to attach  it to the $(n-1)$-simplex $\lip g_0, \ldots, \hat g_i, \ldots, g_n \rip$. Write $\Delta^n$ for the simplex defined as the points $P=(s_0, \ldots, s_n) \in I^{n+1}$ with $\sum_{i=0}^n s_i=1$.
A general point in $\bar EG$, lying in the cell corresponding to $\lip g_0,\ldots, g_n \rip$, is specified by a point  $P\in \Delta^n$ as above. We will represent this point with the   notation $(g_0, g_1, \ldots, g_n; P)$.


For a right action of $G$ on $\bar EG$ we take the diagonal right multiplication, applied to each $G$-coordinate. Write $\bar BG$ for the quotient.  Then $\bar EG$ is contractible, and so $\bar BG$ is a classifying space for $G$. Write $\bar E_nG$ for the $n$-skeleton on $\bar EG$, being the union of all $k$-simplices with $k \leq n$, and $\bar B_nG$ for the image of $\bar E_nG$ under the quotient. A typical member of $\bar B_nG$ is expressed as $[g_0,g_1, \ldots, g_n;P]$ with $P \in \Delta^n$. The simplices of $\bar BG$ are given by $[g_0, \ldots,g_n]:=\{[g_0, \ldots, g_n;P]:P \in \Delta^n\}$.
 
For understanding $\bar BG$ as a $\Delta$-complex, the $\Delta^n$ form of the simplex is preferable. For understanding it as a CW-complex, the $\Delta_n$-form is preferable. So we fix isomorphisms between them. Let $\theta^n: \Delta^n \to \Delta_n$ be the isomorphism $(s_0,\ldots, s_n) \mapsto (s_0,s_0+s_1, \ldots, s_0+s_1+ \cdots + s_{n-1})$; its inverse
$\theta_n: \Delta_n \to \Delta^n$ is given by $(t_1, \ldots, t_n) \mapsto (t_1, t_2-t_1, \ldots, t_n-t_{n-1},1-t_n)$.

Given a tuple $e=(g_0, g_1, \ldots, g_n)$, we have a characteristic map $\Phi_e: \Delta_n \to \bar BG$ given by $\ul t \mapsto [e;\theta_n(\ul t)]$.
 This gives a CW-structure to $\bar BG$.
 
We can form a free resolution $F_\bullet$ of the trivial $G$-module $\Z$ as follows. Let $G$ act on $G^{n+1}$ by right multiplication on the last coordinate.
Let $F_n$ be the resulting permutation representation (over $\Z$) of $G$. Thus an abelian group, $F_n$ is the free abelian group on $G^{n+1}$, and its $G$-module structure is defined by the above action on the $\Z$-basis $\{(g_0,g_1, \ldots, g_n) \mid g_i \in G\}$. Then $F_n$ is a free $\Z G$-module with $\Z G$-basis $(g_1, g_2, \ldots, g_n,1)$. We have differentials $d_n: F_n \to F_{n-1}$ given by $d_1((g,1))=1-g$, and 
\beq
d_n((g_0,\ldots, g_n)) = (g_1, \ldots, g_{n}) +        \sum_{i=1}^{n-1} (-1)^i (g_0,g_1, \ldots, g_{i-1} g_{i}, \ldots, g_n) + (-1)^n (g_0, \ldots, g_{n-1})
\eeq
for $n \geq 2$.

 
  A $G$-equivariant bijection of the $\Z$-basis of $F_n$ with the $n$-simplices of $\bar EG$ is given by sending $(g_0, \ldots, g_n)$ to   $\lip g_0 \cdots g_n, g_1 \cdots g_n, \ldots, g_n \rip$. This induces a $\Z G$-module isomorphism of $F_n$ with the group $C_n^\Delta(\bar EG)$ of simplicial $n$-chains of $\bar EG$.

A  $G$-module morphism $F_n \to A$ is constant on $G$-orbits, and the group of $G$-equivariant simplicial cocycles in $C^n_\Delta(\bar EG,A)$ is isomorphic to $C_\Delta^n(\bar BG,A)$. Hence from the above we obtain an isomorphism
\begin{equation} \label{pran.5.8}
\sigma_n: \Hom_G(F_n,A) \overset{\sim}{\to} C_\Delta^n(\bar BG,A).
\end{equation}
Below we have two complexes, with the upper producing $\HH_{\gp}^*(G,A)$ and the lower producing $\HH^*(\bar BG,A)$.  

 \beq \xymatrix{
  \cdots \ar[r] & \Hom_G(F_n,A)  \ar[d]^{\approx} \ar[r]  & \Hom_G(F_{n+1},A) \ar[r] \ar[d]^{\approx} & \cdots  \\
  \cdots \ar[r] &C_\Delta^n(\bar BG,A) \ar[r] & C_\Delta^{n+1}(\bar BG,A) \ar[r] & \cdots  \\
}
\eeq
 We write $\sigma_n:\HH_{\gp}^n(G,A) \overset{\sim}{\to} \HH^n(\bar{B}G,A)$ for the resulting isomorphism.

 A basis of $F_n$ as a $\Z G$-module is $G^n \times \{ 1_G \} \subset G^{n+1}$, so $\Hom_G(F_n,A)$ is isomorphic to the abelian group of set-maps $G^n \to A$. We will use this identification throughout; thus  for $f: G^n \to A$ we have $\sigma_n(f) \in C_\Delta^n(\bar BG,A)$ defined by
 \beq
 \sigma_n(f)([ g_0, \ldots, g_n])=f(g_0g_1^{-1}, \ldots, g_{n-1} g_n^{-1}).
 \eeq

 \subsubsection{Bar Construction and the Milgram-Steenrod Construction}
 

 The maps $\tilde \Psi_j: G^{j+1} \times \Delta^j \to G^{j+1} \times \Delta_j$, for $0 \leq j \leq n$, defined by
 \beq
 \tilde \Psi_j(g_0, \ldots, g_j;P)= (g_0g_1^{-1},  g_1 g_2^{-1}, \ldots,g_j; \theta^j(P))
 \eeq
  induce  $\Psi_n: \bar{E}_nG \to E_nG$. Write $\Psi:\bar E G \to EG$ for the colimit of the $\Psi_n$; it
 descends to a map $\bar \Psi: \bar BG \to BG$.  
 
 Note that $\bar \Psi_2 (g_0,g_1,g_2; s_0,s_1)=[g_0g_1^{-1},s_0][g_1g_2^{-1},s_0+s_1] \in B_2G$.

 \begin{prop} The map $\bar \Psi$ is a homotopy equivalence.
 \end{prop}
 
 \begin{proof}
 Since $BG$ and $\bar BG$ are both classifying spaces for $G$, it suffices to show that $\Psi$ induces an isomorphism of fundamental groups.
 Let $g \in G$. The loop $\ol \gm_g: t \mapsto [g,t,1_G,1-t]$ in $\bar BG$ is sent to the loop $\gm_g: t \mapsto (g,t)$ in $BG$. The unique lift
$ \widetilde{\ol \gm_g}$ of $\ol \gm_g$ to $\bar EG$ starting at $[1_G,1]$ has endpoint $[g,1]$, and likewise the unique lift $\tilde \gm_g$ of $\gm_g$ to $EG$ starting at
$1_{EG}$  has endpoint $(g,1)$. Hence the following diagram commutes:

\beq
 \xymatrix{
\pi_1(\bar BG) \ar[r]^{\partial} \ar[d]^{\bar \Psi} & \pi_0(G)=G  \ar@{=}[d] \\
\pi_1(BG) \ar[r]^{\partial} & \pi_0(G)=G, \\
}
\eeq
 so this is indeed an isomorphism.

 \end{proof}
 
Recall from Proposition \ref{defn.phi.here} that to a $2$-cocycle $z \in Z^2(G,A)$ we associated a map $\phi_z:B_2G \to B^2A$.
Then $\bar \phi_z:=\bar \Psi_2^*(\phi_z): \bar B_2G \to B^2A$ is given by
 \begin{equation} \label{capital1}
  [g_0,g_1,g_2;P] \mapsto \left[z(g_0g_1^{-1},g_1 g_2^{-1}); \eta_2(\theta^2(P)) \right].
 \end{equation}
 
 Let $\Cl: \HH^2_{\gp}(G,A) \to  \mb E(G,A)$ be the morphism sending a $2$-cocycle $z$ to the central extension $\ms E_z$ as in Section \ref{cocycle.subs}. Since $G$ is discrete, $\Cl$ is classically known to be an isomorphism. Let $\cl$ be the inverse of $\Cl$.
 
   \begin{thm} \label{alpha.agrees} For $G$ discrete, the diagram
\beq  \xymatrix@R=1pc{
     &  \HH^2_{\gp}(G,A)  \ar[dd]^{(\Psi^*)^{-1} \circ \sigma_2} \\
    \mb E(G,A) \ar[ur]^{\cl} \ar[dr]^\alpha & \\
   & \HH^2(BG,A)
}
  \eeq
  of isomorphisms commutes.  (Thus $\alpha$ can be identified with $\cl$.)
  \end{thm}
  
(Compare \cite[Lemma 1.12]{Milgram}.)

\begin{proof}
 Consider the diagram
\beq \xymatrix{
  z \in  \HH^2_{\gp}(G,A) \ar[r]_\sim^{\Cl}  \ar[d]^{\sigma_2}  & \mb{E} (G,A) \ni \ms E_z \ar[d]^{\alpha}      \\
  \HH^2_\Delta(\bar BG,A) \ar[d] & \HH^2(BG,A) \ar[d] \\
\bar f_z \in [\bar BG,B^2A]     \ar[d]^{\bar r_2}&  [ BG,B^2A]  \ni f_z \ar[l]_{\Psi^*}^\sim  \ar[d]^{r_2}    \\
\bar \phi_z \in [\bar B_2G,B^2A]     &   [B_2G,B^2A] \ni \phi_z \ar[l]_{\Psi^*_2}      \\
}
\eeq
Here $\ol r_2$ and $r_2$ are restriction maps from $\bar BG$ and $BG$ to their $2$-skeleta  $\bar B_2G$ and $B_2G$. As such, they are injective.

Let us see that the outer square commutes. Let $z \in Z^2(G,A)$. According to Lemma \ref{jun.19.lem}, 
\beq
\begin{split}
 \bar f_z ([g_0,g_1,g_2;P]) &= \phi_f (\Phi_{g_0,g_1,g_2}(\theta^n P)) \\
&= [z(g_0g_1^{-1},g_1g_2^{-1});\eta_2(\theta^2(P))].\\
\end{split}
\eeq
So $\bar r_2(\bar f_z)=\phi_z$, meaning the outer square commutes.

 Clearly the bottom square commutes, and since $r_2$ is injective, it must be that the top square commutes. Since all morphisms in the top square, not including $\alpha$, are isomorphisms, it must be that $\alpha$ is an isomorphism.
 \end{proof}
 
\begin{cor} \label{G.disc.alf} When $G$ is discrete, the map $\alpha=\alpha_{G,A}$ is an isomorphism.
\end{cor}

As in \cite{hatcher}, the $n$-cells $e$ of a CW-complex correspond to maps $\Phi_e: \Delta_n \to X$. Write $X^{(n)}$ for the $n$-skeleton of $X$. 
 Recall the homeomorphisms $\eta_n: \Delta_n/\partial \Delta_n \overset{\sim}{\to}  I^n/\partial I^n$ from Section \ref{EG.gp}.
 
 \begin{lemma} \label{jun.19.lem} Let $X$ be a CW-complex, and $f \in Z^n_{CW}(X,A)$. We may regard $f$ as a homomorphism from $\HH_n^{CW}(X^{(n)}/X^{(n-1)})$ to $A$. Consider the map $\phi_f: X^{(n)} \to B^nA$ defined by sending $X^{(n-1)}$ to $1_{B^nA}$ and, for each $n$-cell $e$ and $\ul t \in \Delta_n$,
 \beq
 \phi_f(\Phi_e(\ul t)):=[f(e);\eta_n(\ul t)].
 \eeq

 Then $[\phi_f] \in [X^{(n)},B^nA]$ is the image of $f$ under the composition
 \beq
 Z^n_{CW}(X,A) \to \HH^n_{CW}(X,A) \to                   [X,B^nA] \to [X^{(n)},B^nA].
 \eeq
 \end{lemma}
 \begin{proof}

 For an $n$-cell $e$ of $X$, the composition $\phi_f \circ \Phi_e: \Delta_n \to B^nA$ maps the boundary to $1$, hence we may compose 
 \beq
 \phi_f \circ \Phi_e \circ \eta_n^{-1}: I_n/\partial I_n \to \Delta_n/\partial \Delta_n  \to B^nA.
 \eeq
 This map is exactly  $\gm_{f(e)}$, where $\gm_a$ is defined in Lemma \ref{gamma.a}. The isomorphism $[X^{(n)},B^nA]$ maps $[\phi_f]$ to $\phi_f^*(\iota_{B^nA}) \in \HH^n(X^{(n)},A)$.
 
  Recall the   surjection $\beta: \HH^k(X,A) \to \Hom(\HH_k(X),A)$ from the Universal Coefficient Theorem.
  When $X$ is a CW-complex of dimension $n$, then 
  $c \in \HH^n(X,A)$ is determined by $\lip \beta(c),\Phi_e \rip$, where $e$ runs over the $n$-cells of $X$.
 So it suffices to check that $\lip \beta( \phi_f^*(\iota_{B^nA})), \Phi_e \rip=f(e)$. But this is clear.
  \end{proof}

 \subsection{Case of $G$ connected}
 Recall from Definition  \ref{beta.defn} the isomorphism $\beta_G:  \mb E(G,A) \overset{\sim}{\to} \HH^2(BG,A)$, valid for nice connected well-pointed $t$-groups $G$.

\begin{prop} \label{alpha.beta}
	When $G$ is connected, $\alpha_G=-\beta_G$.
\end{prop}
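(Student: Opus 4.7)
The plan is to reduce the identity $\alpha_G = -\beta_G$ to the anticommutative square of Proposition \ref{anticommute}. By Definition \ref{beta.defn} and \eqref{characterized}, $\beta_G(\ms E)$ is the unique class with $m^2(\beta_G(\ms E)) = \partial_{\ms E}$, and by Proposition \ref{Gconnect.m2} the map $m^2$ is a bijection; since $m^2$ is in particular a homomorphism, it suffices to prove
\beq
m^2(\alpha_G(\ms E)) = -\partial_{\ms E}
\eeq
for every $\ms E = (E,i,\rho) \in \mb E(G,A)$.

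First, I would unpack $\alpha_G(\ms E) = f_{Bp}^*(\iota_A)$, where $f_{Bp}: BG \to B^2A$ classifies the $BA$-bundle $Bp: BE \to BG$. Using the definition $m^2 = ((\partial_2^G)^*)^{-1} \circ h^2 \circ q^2$ together with equation \eqref{m2.f.iota} applied to $f_{Bp}$ gives
\beq
m^2(\alpha_G(\ms E)) \circ \partial_2^G = \zeta_A \circ \pi_2(f_{Bp}).
\eeq

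Second, I would rewrite the right-hand side using naturality. The pullback square exhibiting $(BE,Bp)$ as $f_{Bp}^*(EBA \to B^2A)$ is a morphism of $BA$-bundles inducing the identity on fibres; consequently the induced map of the two homotopy long exact sequences yields $\partial_2^{BA} \circ \pi_2(f_{Bp}) = \partial_{BE}$. Combined with the decomposition $\zeta_A = \partial_1^A \circ \partial_2^{BA}$ read off directly from Example \ref{bba}, the previous display becomes
\beq
m^2(\alpha_G(\ms E)) \circ \partial_2^G = \partial_1^A \circ \partial_{BE}.
\eeq

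Finally, I would invoke Proposition \ref{anticommute}, which gives $\partial_1^A \circ \partial_{BE} = -\partial_{\ms E} \circ \partial_2^G$; cancelling the isomorphism $\partial_2^G$ yields $m^2(\alpha_G(\ms E)) = -\partial_{\ms E}$, as desired. The only delicate point here is the sign: the classifying-map pullback, the naturality of the boundary, and the decomposition of $\zeta_A$ all compose strictly commutatively, and the minus sign contributed by Proposition \ref{anticommute} is exactly what forces the relation $\alpha_G = -\beta_G$ rather than equality.
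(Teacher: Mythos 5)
Your proposal is correct and follows essentially the same route as the paper: reduce via the characterization $m^2(\beta_G(\ms E))=\partial_{\ms E}$ to the identity $h^2q^2(f^*(\iota_A))=-\partial_2^*(\partial_{\ms E})$, compute the left side via \eqref{m2.f.iota}, the decomposition $\zeta_A=\partial_1^A\circ\partial_2^{BA}$, and the naturality square \eqref{ashish}, then conclude with Proposition \ref{anticommute}. No gaps.
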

\begin{proof}
	
	Let $\ms E=(E,i,p) \in {\bf E}(G,A)$.
	With the above construction we have 
	\beq
	\alpha_G(\ms E)= f_{Bp}^*(\iota) \in \HH^2(BG,A),
	\eeq
	where $\iota \in \HH^2(BBA,A)$ was introduced in Section \ref{ka2}.
Moreover the extension gives   $\pi_2(f_{Bp}):\pi_2(BG) \to   \pi_2(BBA)= A$. 
From the long exact sequences of homotopy groups we obtain:
\begin{equation} \label{ashish} 
      \xymatrix{
  \pi_2(BG) \ar[r]^{ \partial_{BE}} \ar[d]^{\pi_2(f_{Bp})}& \pi_1(BA)  \ar[d]^{||} \\
 \pi_2(BBA) \ar[r]^{\partial_{BEA}} & \pi_1(BA) \\
}
\end{equation}
 
Recall from \eqref{characterized} that $\beta_G$ is characterized by $m^2(\beta_G(\ms E))=\partial_{\ms E}$. Therefore we must prove that
\beq
h^2 q^2(f_{Bp}^*(\iota_A))=-\partial_{2}^*( \partial_E).
\eeq

We have	
\beq
\begin{split}
h^2 q^2(f_{Bp}^*(\iota_A)) &= \pi_2(f_{Bp})^*(\zeta) \text{ by \eqref{m2.f.iota}} \\
					&=\zeta \circ \pi_2(f_{Bp}) \\
			&= \partial_1 \circ \partial_{BEA} \circ \pi_2(f_{Bp})  \\
			&= \partial_1 \circ \partial_{BE} \text{ by \eqref{ashish} } \\
			&=  - \partial_E \circ \partial_2 \text{ (by Corollary \ref{anticommute})} \\
			&= -\partial_{2}^*( \partial_E), \\
\end{split}
\eeq
as required. \end{proof}
 
   \subsection{A Criterion to Ensure $\alpha_G$ is an isomorphism}

We continue to assume that $G$ is a nice well-pointed $t$-group.
   
\begin{thm} \label{big.theorem}
The map $\alpha_G$ is injective. In both of the following cases, $\alpha_G$ is an isomorphism:
\begin{enumerate}
\item $G$ is the semidirect product of a discrete group and a connected group.
\item $G^\circ$ is simply connected.
\end{enumerate}

\end{thm}


\begin{proof}
 
  Put $\pi_0=\pi_0(G)$, and consider the diagram
\beq
\xymatrix{
0 \ar[r] & \mb E(\pi_0,A)\ar[d]^{\alpha_{\pi_0}} \ar[r]^{\qoppa^*} & \mb E(G,A) \ar[d]^{\alpha_G} \ar[r]^{\iota^*} & \mb E(G^\circ, A)^{\pi_0} \ar[d]^{\alpha_{G^\circ}}  \\
0 \ar[r] & \HH^2(B\pi_0,A)\ar[r]^{B\qoppa^*} & \HH^2(BG,A) \ar[r]^{B\iota^*} & \HH^2(BG^\circ, A)^{\pi_0}     }
\eeq	
 
 The top row is exact by Proposition \ref{extexact},  and the bottom row is exact by Theorem \ref{cohomexact}. The diagram is commutative by the naturality of $\alpha$.
 The map $\alpha_{\pi_0}$ is an isomorphism by Corollary \ref{G.disc.alf}. The map $\alpha_{G^\circ}$ is a bijection by Theorem \ref{Gconnect.equiv} and Proposition \ref{alpha.beta}. The injectivity of $\alpha_G$ then follows by the 5-Lemma.  

When $G = G^\circ \rtimes \pi_0$, then $\iota^*$ is surjective by Proposition  \ref{surj}. From another diagram chase we deduce that $\alpha_G$ is an isomorphism. 
\end{proof}

 
 \begin{remark} It is also true that $\alpha$ is an isomorphism when   $\HH^3_{\gp}(\pi_0(G),A)=0$; see \cite[Theorem 8.0.6]{Pranjal.Thesis}.
 \end{remark}

  \subsection{Lifting and Cohomology}

\begin{thm} \label{lifting.and.coh} Let $\varphi:G' \to G$ be a homomorphism between   nice well-pointed $t$-groups, and $\ms E=(E,i,p) \in {\bf E}(G,A)$. Then $\varphi$ lifts to $E$ iff 
\beq
\varphi^*(\alpha_G(\ms E))=0.
\eeq
\end{thm}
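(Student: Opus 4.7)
The plan is to combine three earlier results: the characterization of lifting via triviality of the pullback extension (Proposition \ref{ext.lift}), the naturality of $\alpha$ in the group variable (Proposition \ref{G.disc.alpha.inj}), and the injectivity of $\alpha_{G'}$ (Theorem \ref{big.theorem}). Concretely, I would reduce the lifting problem on $G'$ to a cohomological equation via the commutative square
\beq
\alpha_{G'}(\varphi^*(\ms E)) = \varphi^*(\alpha_G(\ms E)),
\eeq
coming from naturality.

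First, by Proposition \ref{ext.lift}, $\varphi$ lifts to $E$ if and only if the pullback extension $\varphi^*(\ms E) \in \mb E(G',A)$ represents the trivial element, i.e.\ equals $0$. So the theorem reduces to showing
\beq
\varphi^*(\ms E) = 0 \quad \iff \quad \varphi^*(\alpha_G(\ms E)) = 0.
\eeq

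Second, apply naturality of $\alpha$ (Proposition \ref{G.disc.alpha.inj}) to get $\alpha_{G'}(\varphi^*(\ms E)) = \varphi^*(\alpha_G(\ms E))$. Since $\alpha_{G'}$ is a group homomorphism and $\alpha_{G'}(0) = 0$, the forward implication is immediate. For the reverse implication, use the injectivity of $\alpha_{G'}$ from Theorem \ref{big.theorem}: if $\varphi^*(\alpha_G(\ms E)) = 0$, then $\alpha_{G'}(\varphi^*(\ms E)) = 0$, which forces $\varphi^*(\ms E) = 0$.

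There is no serious obstacle here; the work has already been done in establishing injectivity of $\alpha_G$ for general CW-groups and the naturality of the construction. The only delicate point is confirming that the naturality statement really applies to the morphism $\varphi: G' \to G$ of CW-groups (which it does, since $\alpha$ is a natural transformation of functors on the category of CW-groups), so the cited commutative square holds verbatim.
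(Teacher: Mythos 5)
Your proposal is correct and follows exactly the same route as the paper: reduce to triviality of $\varphi^*(\ms E)$ via Proposition \ref{ext.lift}, invoke naturality of $\alpha$ to rewrite $\alpha_{G'}(\varphi^*\ms E)=\varphi^*(\alpha_G(\ms E))$, and conclude by the injectivity of $\alpha_{G'}$ from Theorem \ref{big.theorem}. No discrepancies to report.
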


\begin{proof} 
From Proposition \ref{ext.lift}, we know $\varphi$ lifts to $E$ iff $\varphi^*(\ms E)=0$. But as $\alpha_{G'}$ is injective (Theorem \ref{big.theorem}), this is equivalent to the identity 
\beq
0= \alpha_{G'}(\varphi^* \ms E) =\varphi^* (\alpha_G(\ms E)).
\eeq \end{proof}

 
 \section{Stiefel-Whitney Classes} \label{SWC.section} 
 
 In this section we define Stiefel-Whitney classes for orthogonal complex representations. This is equivalent to the definition in \cite[Section 2.6]{Benson.II} given for real representations, when $G$ is compact. For the convenience of the casual reader, let us take $G$ to be a Lie group, although this could be generalized to ``nice well-pointed $t$-group''.

 Let $V$ be a complex vector space of dimension $n$, with a nondegenerate quadratic form $Q$. Write $\Or(V)$ for the corresponding orthogonal group. 
 Let $\mc B=\{e_1, \ldots, e_n\}$ be an orthonormal basis of $V$. Then
 \beq
\Gamma=\{ g \in \Or(V) \mid \forall i, ge_i= \pm  e_i \}
 \eeq
is evidently an $\mb F_2$-vector space with an obvious basis.  Write $v_1, \ldots, v_n$ for the dual basis, viewed in  $\HH^1(B\Gamma,\Z/2\Z)$. Then
 \beq
 \HH^*(B\Gamma,\Z/2\Z) =\Z/2\Z[v_1, \ldots, v_n],
 \eeq
 meaning the mod $2$ cohomology of $B \Gamma$ is a polynomial algebra in   the $v_i$. The restriction map
 \beq
 \HH^*(\BO(V), \Z/2\Z) \to  \HH^*(B\Gamma, \Z/2\Z)
 \eeq
is injective \cite[Theorem 2.2]{toda}, and its image is the symmetric algebra   $\Z/2\Z[v_1, \ldots, v_n]^{S_n}$. Write $\mc E_1, \ldots, \mc E_n$ for the elementary symmetric polynomials in the $v_i$; by the Fundamental Theorem of Symmetric Polynomials, we can therefore identify
\beq
 \HH^*(\BO(V), \Z/2\Z) \cong \Z/2\Z[\mc E_1, \ldots, \mc E_n].
 \eeq
 For $1 \leq k \leq n$, write $w_k \in  \HH^k(\BO(V), \Z/2\Z)$ for the cohomology class corresponding to $\mc E_k$.  
 
 Now let $G$ be a Lie group, and $\pi: G \to \Or(V)$ an orthogonal (complex) representation. We define
 \beq
 w_k(\pi)=\pi^*(w_k) \in \HH^k(BG,\Z/2\Z);
 \eeq
 this is called the $k$th Stiefel-Whitney Class (SWC) of $\pi$. Note that if  $\varphi: G_1 \to G_2$ is a homomorphism of Lie groups, and $\pi$ is an orthogonal representation of $G_2$, then  
  \begin{equation} \label{funct.w}
  \varphi^*(w(\pi))=w(\pi \circ \varphi).
  \end{equation}
 Recall the definition of $m^1$ from Proposition \ref{m^1}.
 
  \begin{thm} Let $\pi: G \to \Or(V)$ be an orthogonal representation of a Lie group $G$. Then the isomorphism $m^1: \HH^1(BG,\Z/2\Z) \to \Hom_c(G, \mu_2)$ takes $w_1(\pi)$ to $\det  \pi$. \end{thm}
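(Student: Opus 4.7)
The plan is to reduce by naturality to the universal case $G=\Or(V)$, $\pi=\id$, and then compare the two candidates for $m^1(w_1)$ on the finite subgroup $\Gamma$. Specifically, since $m^1$ is natural (Proposition \ref{m^1}) and $w_1(\pi)=\pi^*(w_1)$, both sides of the desired identity are natural in the pair $(G,\pi)$; it therefore suffices to prove $m^1(w_1)=\det$ as elements of $\Hom_c(\Or(V),\mu_2)$.

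Next, I would argue that there are essentially only two candidates. The group $\Or(V)$ has two path components, distinguished by the determinant, so $\pi_0(\Or(V))\cong\mu_2$. Since $\mu_2$ is discrete, any continuous homomorphism $\Or(V)\to\mu_2$ factors through $\pi_0$, giving $\Hom_c(\Or(V),\mu_2)=\Hom(\mu_2,\mu_2)$, a two-element set consisting of the trivial character and $\det$. It then suffices to show that $m^1(w_1)$ is nontrivial.

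To detect nontriviality, I would restrict along the inclusion $i:\Gamma\hookrightarrow\Or(V)$. By naturality of $m^1$, the character $m^1(w_1)\circ i$ equals $m^1(i^*w_1)=m^1(\mathcal E_1)=m^1(v_1+\cdots+v_n)$. Now $\Gamma\cong(\Z/2\Z)^n$ and the dual basis description of the $v_i$ amounts to the assertion that, under the natural isomorphism $H^1(B\Gamma,\Z/2\Z)\cong\Hom(\Gamma,\mu_2)$ supplied by $m^1$ in the discrete abelian case, $v_i$ is exactly the coordinate character $g\mapsto\epsilon_i$ (where $ge_i=\epsilon_i e_i$). Hence $m^1(\mathcal E_1)(g)=\epsilon_1\cdots\epsilon_n=\det g$, which is the nontrivial character of $\Gamma$ (e.g.\ it takes any reflection to $-1$). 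Therefore $m^1(w_1)|_\Gamma$ is nontrivial, forcing $m^1(w_1)=\det$.

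The only genuine verification needed is the identification of $m^1$ on $B\Gamma$ with the natural pairing $H^1(B\Gamma,\Z/2\Z)=\Hom(\Gamma,\mu_2)$; this is the composition of Hurewicz in degree $1$ with the connecting isomorphism $\pi_1(B\Gamma)\overset\sim\to\pi_0(\Gamma)=\Gamma$ from \eqref{pi1bg}, dualized. I expect this compatibility (matching the polynomial generators $v_i$ with the coordinate characters) to be the main bookkeeping step, but it is essentially the discrete case built into the definition of $m^1$ and presents no real obstacle.
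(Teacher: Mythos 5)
Your proof is correct and follows the same skeleton as the paper's: reduce by naturality of $m^1$ and of $w_1$ to the universal case $G=\Or(V)$, $\pi=\id$, and then observe that there are only two candidate characters. The one place you diverge is in how the trivial character is ruled out. The paper gets this for free: since $H^1(\BO(V),\Z/2\Z)=\{0,w_1\}$ and $m^1$ is an isomorphism, the target $\Hom_c(\Or(V),\mu_2)$ has exactly two elements, so the nonzero class $w_1$ must be sent to the unique nontrivial character, which is $\det$. Your detection of nontriviality by restricting to $\Gamma$ and computing $m^1(\mc E_1)(g)=\epsilon_1\cdots\epsilon_n=\det g$ is valid --- the identification of $v_i$ with the coordinate character is indeed how $v_i$ is set up in Section \ref{SWC.section} --- but it is unnecessary; the ``main bookkeeping step'' you flag can be skipped entirely, since injectivity of $m^1$ already forces $m^1(w_1)\neq 1$.
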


\begin{proof}
Since $\HH^1(\BO(V), \Z/2\Z)=\{0,w_1\}$, the isomorphism 
\beq
m^1:  \HH^1(BG,\Z/2\Z) \to \Hom_c(\Or(n), \mu_2)
\eeq
 must take $w_1$ to $\det$.
 Now by the commutative square
   \beq
\xymatrix{ 
		 \HH^1(\BO(V),\Z/2\Z) \ar[r]^{m^1} \ar[d]^{\pi^*}&  \Hom_c(\Or(V),\Z/2\Z) \ar[d] \\
		 \HH^1(BG,\Z/2\Z) \ar[r]^{m^1} & \Hom_c(G,\Z/2\Z) \\
	}
\eeq
we find that
 \beq
 \begin{split}
 m^1(w_1(\pi)) &= m^1(\pi^* w_1) \\
 		&= \det \pi, \\
		\end{split}
		\eeq
		as required.
\end{proof}

   \section{Extensions of $\Or(V)$} \label{double.covs.ort.sec}
 
 Assume now that $\dim V \geq 2$. Since 
 \beq
 \HH^2(\BO(V),\Z/2\Z) =\{0,w_1^2, w_2, w_1^2+w_2 \},
 \eeq
 there are four inequivalent central extensions of $\Or(V)$ by $A=\Z/2\Z$. Let us undertake to describe these extensions.
 
 The first one is easy to describe. Taking determinants gives a surjection from $\Or(V)$ to $\mu_2$. Of course, $\mu_2$ has an $A$-cover, by squaring the cyclic group $\mu_4 < \mb C^\times$ generated by the imaginary unit $i$. 
  Write $\widetilde \Or(V) \to \Or(V)$ for the pullback:
   \beq 
	\xymatrix{
		\widetilde \Or(V)   \ar[d]   \ar[r]  &  \mu_4  \ar[d]^{z \mapsto z^2}\\
		 \Or(V)  \ar[r]^{\det} & \mu_2}
	\eeq 
 Thus $\widetilde \Or(V)$ is the subgroup of  pairs $(g,z) \in \Or(V) \times \mu_4$ with $\det g=z^2$.

  \subsection{Pin Groups}
 
Let $V$ be a finite-dimensional (complex) vector space, with a nondegenerate quadratic form $Q$.  
Say $u \in V$ is a \emph{unit vector} provided $Q(u)=1$, and an \emph{antiunit vector}, provided $Q(u)=-1$. 
Note that if $u$ is a unit vector, then $iu$ is an antiunit vector.

The Clifford algebra $C(V)$ is the quotient of the tensor algebra $T(V)$ by the two-sided ideal generated by the set
\beq
 \{ v\otimes v -Q(v): v\in V\}.
 \eeq
 It contains $V$ as a subspace.  Write $C(V)^\times$ for the group of invertible members of $C(V)$. 
 
 \begin{defn} Write $\Pin^-(V)$ for the subgroup of $C(V)^\times$ generated by the antiunit vectors in $V \subset C(V)$, and
 $\Pin^+(V)$ for the subgroup generated by the unit vectors.
 \end{defn} 
  
 There are unique homomorphisms  $\rho^-: \Pin^-(V) \to \Or(V)$ and $\rho^+:  \Pin^+(V) \to \Or(V)$. Here $\rho^-$ (resp., $\rho^+$) sends an antiunit (resp., unit) vector $u$ to the reflection of $V$ determined by $u$. 
 The kernels of $\rho^+$ and $\rho^-$ are both equal to $\mu_2$.
 Since $\Or(V)$ is generated by reflections, $\rho^+$ and $\rho^-$ are surjective. Thus they are extensions of $\Or(V)$ by $\mu_2$.    See \cite[Chapter 20]{fulton} and \cite[Appendix 1]{frohlich} for details.

 \subsection{Restriction to $\Gamma_2$}
 Now suppose $\dim V \geq 2$. Pick perpendicular unit vectors $e_1,e_2 \in V$, with corresponding reflections $r_1,r_2$.  Let $\Gamma_2< \Or(V)$ be the Klein $4$-subgroup generated by $r_1,r_2$. 
 We have:
    \beq 
	\xymatrix{
		\mb E(\Or(V),\Z/2\Z)   \ar[d]^{\alpha_G}   \ar[r]  &  \mb E(\Gamma_2,\Z/2\Z) \ar[d]^{\alpha_{\Gamma_2}}\\
		\HH^2(\BO(V),\Z/2\Z)  \ar[r] & \HH^2(B\Gamma_2,\Z/2\Z)       }
	\eeq
 
Since $\Gamma_2$ is discrete, the bijection $\alpha_{\Gamma_2}$ is well-understood; see for instance  \cite[page 830]{Dummit}. 
For each of the four extensions $\ms E$ above, one determines the restriction $\ms E_2$ of $\ms E$ to $\Gamma_2$, and computes
$\alpha_{\Gamma_2}(\ms E_2)$ explicitly with $2$-cocyles. Here is the result:

\begin{prop} Let $G$ be a Lie group. An orthogonal representation $\pi: G \to \Or(V)$ lifts to
\begin{enumerate}
\item $\widetilde \Or(V)$ iff $w_1(\pi)^2=0$.
\item $\Pin^+(V)$ iff $w_2(\pi)=0$.
\item $\Pin^-(V)$ iff $w_2(\pi)+w_1^2(\pi)=0$.
\end{enumerate}
The set of lifts of $\pi$ is acted on simply transitively by the group of continuous linear characters $\chi: G \to \mu_2$. 
\end{prop}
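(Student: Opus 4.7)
The plan is to reduce everything to the general lifting criterion, Theorem \ref{lifting.and.coh}, once we know the class $\alpha_{\Or(V)}(\ms E) \in H^2(B\Or(V),\Z/2\Z)$ for each of the three extensions $\ms E$. Specifically, granting the identifications
\beq
\alpha_{\Or(V)}(\widetilde \Or(V)) = w_1^2,\quad \alpha_{\Or(V)}(\Pin^+(V)) = w_2,\quad \alpha_{\Or(V)}(\Pin^-(V)) = w_2+w_1^2,
\eeq
the three stated criteria are immediate: Theorem \ref{lifting.and.coh} says that $\pi$ lifts to one of these covers iff $\pi^*$ of the associated class vanishes, and by naturality \eqref{funct.w} we have $\pi^*(w_k) = w_k(\pi)$.

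To establish these three identifications, the idea is to restrict along the inclusion of the discrete Klein four subgroup $\Gamma_2 < \Or(V)$ generated by the reflections $r_1,r_2$. Under the restriction $H^*(\BO(V),\Z/2\Z) \to H^*(B\Gamma_2,\Z/2\Z)$, one has $w_1 \mapsto v_1+v_2$ and $w_2 \mapsto v_1 v_2$, and the three candidates $w_1^2$, $w_2$, $w_1^2+w_2$ map respectively to $v_1^2+v_2^2$, $v_1 v_2$, $v_1^2+v_1 v_2+v_2^2$, which are pairwise distinct and nonzero in $H^2(B\Gamma_2,\Z/2\Z)$. So restriction to $\Gamma_2$ is injective on the relevant four-element subset. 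By naturality of $\alpha$, it therefore suffices to compute $\alpha_{\Gamma_2}$ of each pullback extension. Since $\Gamma_2$ is discrete, Remark \ref{our.remark} identifies $\alpha_{\Gamma_2}$ with the classical group-cohomology bijection, so one picks a set-theoretic section and reads off the corresponding $2$-cocycle from the Clifford-algebra multiplication (respectively from the fibre product with $\mu_4 \to \mu_2$), noting that the preimages of $\Gamma_2$ are $C_4 \times C_2$ in $\widetilde\Or(V)$, dihedral of order $8$ in $\Pin^+(V)$, and the quaternion group $Q_8$ in $\Pin^-(V)$, each producing the claimed class.

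The final assertion about the simply transitive action is immediate from Section \ref{rel.lift.prob}: given any one lift $\hat\pi: G \to E$, every other lift has the form $(\chi\odot\hat\pi)(g) = \chi(g)\hat\pi(g)$ for a unique $\chi \in \Hom_c(G,\mu_2)$, which is precisely the group of continuous linear characters $G \to \{\pm 1\}$, and conversely every such $\chi\odot\hat\pi$ is again a lift since $\mu_2$ sits centrally in $E$.

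The principal obstacle is the explicit $2$-cocycle calculation for each preimage of $\Gamma_2$: one must identify the Clifford-algebra subgroup lifting $\Gamma_2$ in each $\Pin^\pm(V)$ (using that $r_j$ lifts to $\pm e_j$ in $\Pin^+$ and to $\pm i e_j$ in $\Pin^-$, with $e_j^2 = 1$ or $-1$ respectively), and similarly the fibre product $\Gamma_2 \times_{\mu_2} \mu_4$ for $\widetilde\Or(V)$, then match the resulting cocycle against standard representatives of the three distinguished classes in $H^2(\Gamma_2,\Z/2\Z)$.
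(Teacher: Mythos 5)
Your proposal is correct and follows essentially the same route as the paper: the authors likewise restrict each extension to the Klein four-subgroup $\Gamma_2$, compute $\alpha_{\Gamma_2}$ of the preimages ($C_4\times C_2$, dihedral, quaternion) with explicit $2$-cocycles to pin down the classes $w_1^2$, $w_2$, $w_2+w_1^2$, and then invoke Theorem \ref{lifting.and.coh} together with the simply transitive $\Hom_c(G,\mu_2)$-action from Section \ref{rel.lift.prob}. Your explicit check that restriction to $\Gamma_2$ separates the four relevant classes is a point the paper leaves implicit, but it is the same argument.
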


\begin{proof}
 This follows from the above and Theorem \ref{lifting.and.coh}.
 \end{proof}
 
 \section{Further Directions} \label{sec:further}
 
There is scope for extending the theory of this paper to any degree, and to the case where the abelian group $A$ is not discrete.  In \cite{Pranjal.Thesis}, the first author studied the case when $A$ is not discrete; there is a generalized cohomology theory $\HH^*(-,A)$ and a natural morphism
$\alpha: \mb E(G,A) \to \HH^2(BG,A)$, generalizing the one in this paper.
Moreover, he also defined natural maps $\alpha^n:  \HH^n_c(G,A) \to \HH^n(BG,A)$, where $\HH^n_c(G,A)$ denotes continuous group cohomology.
When $n=2$,  this agrees with a restriction of $\alpha$ above. When $G,A$ are discrete, this agrees with $\sigma_n$ in Section \ref{bar.const.subsub}. Likely one could define an analogue of $\alpha$ from Yoneda extensions of length $n$ to $\HH^n(BG,A)$; see  \cite[Section 10.3]{Pranjal.Thesis} for a discussion.
 For example, an approach to prove $\alpha$ is an isomorphism (with $A$ discrete) in the spirit of the argument for Theorem \ref{big.theorem} is discussed on \cite[page 112]{Pranjal.Thesis}.

   \bibliographystyle{alpha}
\bibliography{mybib}

\begin{thebibliography}{NWW13}

\bibitem[AM04]{Milgram}
A.~Adem and R.~J. Milgram.
\newblock {\em Cohomology of finite groups}, volume 309 of {\em Grundlehren der
  mathematischen Wissenschaften [Fundamental Principles of Mathematical
  Sciences]}.
\newblock Springer-Verlag, Berlin, second edition, 2004.

\bibitem[Ben98]{Benson.II}
D.~J. Benson.
\newblock {\em Representations and cohomology. {II}}, volume~31 of {\em
  Cambridge Studies in Advanced Mathematics}.
\newblock Cambridge University Press, Cambridge, second edition, 1998.
\newblock Cohomology of groups and modules.

\bibitem[Bou07]{Bourbaki.VD}
N.~Bourbaki.
\newblock {\em Vari{\'e}t{\'e}s diff{\'e}rentielles et analytiques: fascicule
  de r{\'e}sultats}, volume~8.
\newblock Springer Science \& Business Media, 2007.

\bibitem[Bou16]{Bou.AT}
N.~Bourbaki.
\newblock {\em Topologie alg{\'e}brique: Chapitres 1 {\`a} 4}.
\newblock Springer, 2016.

\bibitem[Bre97]{Bredon}
G.~E. Bredon.
\newblock {\em Topology and geometry}, volume 139 of {\em Graduate Texts in
  Mathematics}.
\newblock Springer-Verlag, New York, 1997.
\newblock Corrected third printing of the 1993 original.

\bibitem[DF04]{Dummit}
D.~S. Dummit and R.~M. Foote.
\newblock {\em Abstract algebra}, volume~3.
\newblock Wiley Hoboken, 2004.

\bibitem[FH13]{fulton}
W.~Fulton and J.~Harris.
\newblock {\em Representation theory: a first course}, volume 129.
\newblock Springer Science \& Business Media, 2013.

\bibitem[Fr{\"o}85]{frohlich}
A.~Fr{\"o}hlich.
\newblock Orthogonal representations of galois groups, stiefel-whitney classes
  and hasse-witt invariants.
\newblock {\em Journal f{\"u}r die reine und angewandte Mathematik},
  1985(360):84--123, 1985.

\bibitem[Ful95]{fulton.AT}
W.~Fulton.
\newblock {\em Algebraic topology}, volume 153 of {\em Graduate Texts in
  Mathematics}.
\newblock Springer-Verlag, New York, 1995.
\newblock A first course.

\bibitem[Hat02]{hatcher}
A.~Hatcher.
\newblock {\em Algebraic topology}.
\newblock Cambridge University Press, Cambridge, 2002.

\bibitem[Hus94]{Husemoller}
D.~Husemoller.
\newblock {\em Fibre bundles}, volume~20 of {\em Graduate Texts in
  Mathematics}.
\newblock Springer-Verlag, New York, third edition, 1994.

\bibitem[Jai25]{Pranjal.Thesis}
P.~Jain.
\newblock Central extensions of topological groups and the cohomology of
  classifying spaces.
\newblock Ms thesis, Indian Institute of Science Education and Research
  (IISER), Pune, May 2025.
\newblock Available at IISER Pune Digital Repository.

\bibitem[Lam77]{Lamartin}
W.~F. Lamartin.
\newblock On the foundations of k-group theory.
\newblock 1977.

\bibitem[Mac67]{Maclane.Homology}
S.~MacLane.
\newblock {\em Homology}.
\newblock Die Grundlehren der mathematischen Wissenschaften, Band 114.
  Springer-Verlag, Berlin-New York, first edition, 1967.

\bibitem[McC01]{McCleary}
J.~McCleary.
\newblock {\em A user's guide to spectral sequences}, volume~58 of {\em
  Cambridge Studies in Advanced Mathematics}.
\newblock Cambridge University Press, Cambridge, second edition, 2001.

\bibitem[ML13]{Maclane}
S.~Mac~Lane.
\newblock {\em Categories for the working mathematician}, volume~5.
\newblock Springer Science \& Business Media, 2013.

\bibitem[MS23]{Malik.Spallone.SL}
N.~Malik and S.~Spallone.
\newblock Stiefel-{W}hitney classes of representations of {SL}(2,q).
\newblock {\em Journal of Group Theory}, 2023.

\bibitem[NWW13]{NWW}
K.~H. Neeb, F.~Wagemann, and C.~Wockel.
\newblock Making lifting obstructions explicit.
\newblock {\em Proceedings of the London Mathematical Society},
  106(3):589--620, 2013.

\bibitem[PS98]{Piccinini}
R.~A. Piccinini and M.~Spreafico.
\newblock The {M}ilgram-{S}teenrod construction of classifying spaces for
  topological groups.
\newblock {\em Exposition. Math.}, 16(2):97--130, 1998.

\bibitem[Sch26]{Schreier}
O.~Schreier.
\newblock {\"U}ber die erweiterung von gruppen i.
\newblock {\em Monatshefte f{\"u}r Mathematik und Physik}, 34(1):165--180,
  1926.

\bibitem[Ste68]{Steenrod}
N.~Steenrod.
\newblock Milgram's classifying space of a topological group.
\newblock {\em Topology}, 7(4):349--368, 1968.

\bibitem[Ste99]{Steenrod.FB}
N.~Steenrod.
\newblock {\em The topology of fibre bundles}, volume~44.
\newblock Princeton university press, 1999.

\bibitem[Tat79]{Tate}
J.~Tate.
\newblock Number theoretic background.
\newblock In {\em Automorphic forms, representations and {$L$}-functions
  ({P}roc. {S}ympos. {P}ure {M}ath., {O}regon {S}tate {U}niv., {C}orvallis,
  {O}re., 1977), {P}art 2}, Proc. Sympos. Pure Math., XXXIII, pages 3--26.
  Amer. Math. Soc., Providence, R.I., 1979.

\bibitem[Tod84]{toda}
H.~Toda.
\newblock Cohomology of classifying spaces, homotopy theory and related topics.
\newblock {\em Adv. Stud. Pure Math}, 9:75--108, 1984.

\bibitem[Wei94]{weibel}
C.~A. Weibel.
\newblock {\em An introduction to homological algebra}.
\newblock Number~38. Cambridge university press, 1994.

\bibitem[Wig73]{Wigner}
D.~Wigner.
\newblock Algebraic cohomology of topological groups.
\newblock {\em Transactions of the American Mathematical Society}, 178:83--93,
  1973.

\end{thebibliography}
 \end{document}